\documentclass[11pt,reqno]{amsart}

\usepackage[T1]{fontenc}
\usepackage{lmodern}
\usepackage{amsmath,amssymb,amsthm,mathtools}
\usepackage{microtype}
\usepackage{xcolor}
\usepackage[margin=1.05in]{geometry}
\usepackage[colorlinks=true,hypertexnames=false,linkcolor=blue!50!black,citecolor=blue!50!black,urlcolor=blue!50!black]{hyperref}
\usepackage[numbers, sort&compress]{natbib}

\newcommand{\EE}{\mathbb E}
\newcommand{\PP}{\mathbb P}
\newcommand{\R}{\mathbb R}

\newcommand{\N}{\mathbb N}
\newcommand{\cA}{\mathcal{A}}
\newcommand{\cB}{\mathcal{B}}

\newcommand{\cH}{\mathcal{H}}
\newcommand{\cL}{\mathcal{L}}
\newcommand{\cP}{\mathcal{P}}
\newcommand{\cM}{\mathcal{M}}
\newcommand{\fG}{\mathfrak{G}}
\newcommand{\1}{\mathbf 1}
\newcommand{\pa}[1]{\left(#1\right)}
\newcommand{\bra}[1]{\left[#1\right]}
\newcommand{\abs}[1]{\left|#1\right|}

\newcommand{\norm}[1]{\left\lVert#1\right\rVert}
\newcommand{\supp}{\operatorname{supp}}
\newcommand{\ip}[2]{\left\langle #1,#2\right\rangle}

\newtheorem{theorem}{Theorem}[section]
\newtheorem{proposition}[theorem]{Proposition}
\newtheorem{definition}[theorem]{Definition}
\newtheorem{lemma}[theorem]{Lemma}
\newtheorem{example}[theorem]{Example}
\newtheorem{corollary}[theorem]{Corollary}
\newtheorem{remark}[theorem]{Remark}

\title[On rigidity of infinite exchangeable sequences]{On rigidity of infinite exchangeable sequences}
\author{Maximillian Newman}
\address{Section of Genetic Medicine, University of Chicago, Chicago, Illinois, USA}
\email{mnewman98@uchicago.edu}
\thanks{Correspondence: \texttt{mnewman98@uchicago.edu}.}
\date{\today}

\subjclass[2020]{Primary 60G09; Secondary 60E05, 62E10}
\keywords{Exchangeability, de Finetti theorem, mixture identifiability, localization, exponential families, M\"untz--Sz\'asz theorem, rotatability, spherical symmetry}

\begin{document}

\begin{abstract}
    We ask when the law of an infinite
    exchangeable sequence is uniquely determined by a finite-dimensional marginal. For mixtures of a
    probability kernel, a weak localization condition makes
    three coordinates determine the directing law. Completeness alone
    makes four coordinates sufficient. Neither order can be lowered in
    general. For exponential kernels with a nonnegative sufficient statistic, a
    M\"untz--Sz\'asz support condition implies three coordinates suffice. In the
    scalar case, this result is sharp. We also
    characterize when completeness holds simultaneously for all scalar mixing laws by a weighted
    M\"untz density condition and show that statistic support alone need
    not decide completeness. These methods yield sharp finite-dimensional refinements of Freedman's theorem on rotatable sequences, namely that spherical symmetry of three coordinates makes an infinite exchangeable sequence
    rotatable while symmetry of the first two coordinates is in general insufficient. Finally,
    for energy-radial densities we give a complete-monotonicity criterion
    for infinite extendibility and determine the exact uniqueness
    thresholds for $\ell_q$-norm exponential laws.
\end{abstract}

\maketitle

\section{Introduction}

    In this manuscript we consider an infinite sequence $X = \pa{X_i}_{i=1}^\infty$ of random variables taking values in some Polish space $E$. Such a sequence is called \textit{exchangeable} when the distribution of any finite block $(X_1, \ldots, X_k)$ is invariant under relabeling of its indices. While perhaps the simplest relaxation of the classical iid assumption for sequences of random variables, exchangeable sequences exhibit remarkable rigidity properties, most famously the de Finetti-Hewitt-Savage Theorem \cite[Theorem 1.1]{kallenberg05}. This theorem states that an infinite sequence of random variables is exchangeable if and only if there exists a random measure $\eta$, known as the \textit{directing measure}, taking values in the space of probability measures $\cP(E)$ on $E$ under weak convergence so that $X$ is given by taking iid samples from $\eta$. That is, the law of $X$ conditional on $\eta$ is iid, i.e.
    \begin{equation*}
        \cL(X \mid \eta) = \eta^\infty.
    \end{equation*}
    Equivalently, $X$ is uniquely characterized by the law $\Lambda = \cL(\eta) \in \cP(\cP(E))$ of its directing measure $\eta$. $\Lambda$ is itself uniquely characterized by the infinite collection of moment measures
    \begin{equation*}
        \cM_k(\Lambda) := \int_{\cP(E)} \mu^{\otimes k} d\Lambda(\mu) = \EE[\eta^{\otimes k}].
    \end{equation*}
    We ask when $X$ is uniquely determined by $\cM_k(\Lambda)$ for some $k < \infty$. We call this property $k$-rigidity when $k$ coordinates suffice to uniquely determine $\Lambda$.

    $k$-rigidity is a natural extension of classical notions of uniqueness in the literature on mixture identifiability. Classical mixture identifiability asks when a mixture of probability kernels, such as $\Lambda = \frac{1}{2}\delta_{\gamma_1} + \frac{1}{2}\delta_{\gamma_2}$ for $\gamma_v$ the law of a centered Gaussian with variance $v$, can be uniquely identified from $\cM_1(\Lambda)$ among finite mixtures of a prescribed class \cite{TeicherFinite, YakowitzSpragins}. In the
    grouped-sample literature
    \cite{VandermeulenScott,VandermeulenSaitenmacher}, an $m$-component target $\Lambda$ is \emph{$k$-identifiable} if its $k$-observation law identifies it among mixtures with at most $m$ components, and \emph{$k$-determined} if uniqueness holds among all finite mixtures. Competing components are arbitrary in both definitions. $k$-rigidity asks for uniqueness among all directing laws, whether finite mixtures or not. For a finite target and $k\geq2$, it is equivalent to $k$-determinedness by Lemma~\ref{L: finite_competitor_reduction}. In this sense the results below extend finite-mixture determinedness to general mixing laws. 

    3-identifiability has been found in finite-mixture and nonparametric latent-variable models. Kruskal's tensor uniqueness theorem \cite{kruskal77} underlies the finite-mixture identification results of Allman, Matias, and Rhodes \cite{allman09}. Hall and Zhou \cite{hallzhou03} study nonparametric component identification in multivariate mixtures, while Hu and Schennach \cite{huschennach09} identify continuous latent-variable models through an operator factorization under assumptions including injectivity, separation, and normalization. These results restrict the competing number of components or the admissible model class. Here competing directing laws are unrestricted, and their components need not belong to the target kernel family. For an $m$-component target, completeness gives identification from three observations among competitors with at most $m$ atoms (Proposition~\ref{P: finite_independent_bounds}(a)), but Remark~\ref{R: completeness_sharpness} shows that this conclusion can fail against arbitrary directing laws. Weak localization supplies the additional hypothesis in Theorem~\ref{T: general_rigidity} to obtain the stronger 3-rigidity.
    
    Recent work \cite{newman26} demonstrated that joint Gaussianity of four coordinates forces an infinite exchangeable sequence to be a Gaussian process. It was conjectured in \cite{newman26} and proved in \cite{zhengzhou26} that joint Gaussianity of two coordinates suffices. In this manuscript, we establish $3$-rigidity for a broad class of mixture kernels under a suitable localization hypothesis and $4$-rigidity under a weaker completeness hypothesis. Applying these results to Gaussian scale families, consisting of a conditionally iid sequence of centered Gaussian random variables with a random variance, we obtain a sharp finite-dimensional refinement of Freedman's Theorem \cite{freedman62}; see also \cite[Theorem~1.31]{kallenberg05}. These scale mixtures need not have jointly Gaussian finite-dimensional laws. Freedman's Theorem shows that if an infinite exchangeable sequence is rotatable, i.e. spherically symmetric in any finite dimension, then it must be a Gaussian scale family. We prove this conclusion under the weaker hypothesis that just $(X_1, X_2, X_3)$ be spherically symmetric. The three-coordinate hypothesis cannot be reduced to two in general, as we construct counterexamples when only $(X_1, X_2)$ is spherically symmetric.

    The principal applications of our localization methods are to exponential mixtures. So long as the essential support of the energy functional of the exponential family is sufficiently rich, e.g. containing a copy of $\N$, our localization hypothesis holds and we obtain $3$-rigidity. This richness condition is described in terms of a M\"untz--Sz\'asz type condition which covers Gaussian scale, gamma, Weibull, Pareto, beta, negative binomial, and bounded-mean Poisson families, as well as finite products with arbitrarily dependent parameters of these same families. Moreover, we demonstrate the sharpness of $3$-rigidity for any non-point mixing law in one parameter under this richness condition. We also provide a weighted M\"untz--Sz\'asz criterion characterizing completeness simultaneously for every scalar mixing law.

    \subsection{Moment measures and unrestricted rigidity}\label{SS: moment_measures_intro}

        Fix a probability space $\pa{\Omega,\Sigma,\PP}$ and a Polish space $E$, and equip $\cP(E)$, the space of Borel probability measures on $E$, with the topology of weak convergence. The directing measure $\eta$ of an infinite exchangeable sequence $X$ taking values in $E$ is a $\cP(E)$-valued random variable such that the conditional law of $X$ given $\eta$ is $\eta^\infty$. We study $X$ through its directing law $\Lambda:=\cL(\eta)\in\cP(\cP(E))$.

        For later use with signed perturbations, we also define moment measures of signed directing measures. For any finite signed Borel measure $\Lambda$ on $\cP(E)$ and any non-negative integer $k$, we define the $k$th moment measure $\cM_k(\Lambda)$ by
        \begin{equation*}
            \cM_k(\Lambda) := \int_{\cP(E)} \mu^{\otimes k} d\Lambda(\mu),
        \end{equation*}
        where $\mu^{\otimes 0} = 1$.
        For a directing law, $\cM_k(\Lambda)$ is the law of the first
        $k$ coordinates. We seek directing laws determined by one such
        moment measure.

        \begin{definition}[$k$-rigidity]\label{D: k_rigidity}
             We call a probability measure $\Lambda$ on $\cP(E)$ $k$-rigid when it is the unique such probability measure that has its $k$th moment measure equal to $\cM_k(\Lambda)$. Equivalently, $\Lambda$ is $k$-rigid if $\cM_k(\Lambda) = \cM_k(\Gamma)$ and $\Gamma \in \cP(\cP(E))$ implies $\Lambda = \Gamma$.
        \end{definition}
        \begin{remark}
            By the de Finetti-Hewitt-Savage Theorem \cite[Theorem 1.1]{kallenberg05}, $k$-rigidity is equivalent to uniqueness of the infinite exchangeable extension of the corresponding $k$-coordinate law.
            Note also that $k$-rigidity of $\Lambda$ implies $k'$-rigidity of $\Lambda$ for all $k' \geq k$. An important consequence of $k$-rigidity is that it is easy to check convergence of measures to a $k$-rigid law. To be precise, suppose $\Lambda$ is $k$-rigid. Then a sequence $\Lambda_n$ of elements of $\cP(\cP(E))$ converges weakly to $\Lambda$ if and only if $\cM_k(\Lambda_n)$ converges weakly to $\cM_k(\Lambda)$.

            To justify this assertion, continuity of $\mu\mapsto\mu^{\otimes k}$
            gives the forward implication. For the converse, assume $k\geq1$.
            Marginalization gives tightness of $\cM_1(\Lambda_n)$. For
            $\varepsilon>0$, choose compact sets $K_j\subset E$ such that
            $\sup_n\cM_1(\Lambda_n)(K_j^c)\leq\varepsilon2^{-j}/j$.
            Markov's inequality and a union bound show that
            $\Lambda_n(\mathcal K_\varepsilon)\geq1-\varepsilon$ for every $n$,
            where
            \begin{equation*}
                \mathcal K_\varepsilon
                :=\{\mu\in\cP(E):\mu(K_j)\geq1-1/j\text{ for every }j\geq1\}.
            \end{equation*}
            This is a closed tight subset of $\cP(E)$, hence compact by
            Prokhorov's theorem. Thus $(\Lambda_n)$ is tight. Every
            subsequential limit $\Gamma$ satisfies
            $\cM_k(\Gamma)=\cM_k(\Lambda)$ by continuity, and $k$-rigidity
            gives $\Gamma=\Lambda$. Therefore $\Lambda_n$ converges weakly
            to $\Lambda$. The case $k=0$ is immediate, since $0$-rigidity
            forces $\cP(E)$ to be a singleton.

        \end{remark}

        Let $Y$ be another Polish space. By $P: Y \to \cP(E)$ we denote a measurable probability kernel from $Y$ to $E$ \cite[p. 55]{KallenbergFoundations}. We fix a \textit{mixing measure} $\nu\in\cP(Y)$ and define the \textit{mixture} $\fG_\nu = \fG_\nu(P)$, an element of $\cP(\cP(E))$,  by
        \begin{equation*}
            \fG_\nu := \int_Y \delta_{P_y} d\nu(y).
        \end{equation*}
        To each $f \in L^2(\nu)$ we may associate a finite signed measure $Q_\nu[f]$ on $E$ given by mixing the kernel against $f$, i.e. for every Borel set $C\in\cB(E)$ it assigns the mass
        \begin{equation*}
            Q_\nu[f](C) := \int_Y f(y) P_y(C) d\nu(y).
        \end{equation*}

        \begin{definition}[Completeness property]
            We say that $(\nu,P)$ satisfies the completeness property if
            \begin{equation}\label{E: completeness}\tag{C}
                Q_\nu[f] = 0
                \quad
                \text{ implies }
                \quad
                f = 0 \quad \nu\text{-almost everywhere}
            \end{equation}
            for every real $f\in L^2(\nu)$.
            Equivalent forms are given in Lemma~\ref{L: Kh_dense}.
        \end{definition}
        \begin{remark}
            The completeness condition~\eqref{E: completeness} is a form of conditional
            completeness familiar from nonparametric identification.
            Newey and Powell~\cite{NeweyPowell2003} use conditional
            completeness to characterize identification in instrumental
            variable models; the precise $L^2$ formulation relevant here
            is discussed by Andrews~\cite[Section~2]{Andrews2017}.
            To make the connection explicit, let $(U,V)$ have joint law
            $\nu(du)P_u(dv)$ on an auxiliary probability space.
            Lemma~\ref{L: Kh_dense} identifies \eqref{E: completeness}
            with conditional $L^2$ completeness of $U$ given $V$.
            Thus completeness concerns the conditional law of the parameter
            $U$ given the observation $V$, and depends on the mixing law
            $\nu$. 
            
            This differs from completeness of the statistical family $(P_y)$
            in the usual sense; see
            \cite[Section~1]{KaganMalinovskyMattner2015}.
            That condition tests functions on the observation space $E$
            through their expectations under every $P_y$, whereas
            \eqref{E: completeness} tests functions on the parameter space
            $Y$ relative to the fixed mixing law $\nu$.
        \end{remark}

        For finite signed Borel measures $\mu$ and $\mu'$ on $E$, we
        write $\mu\leq\mu'$ if $\mu'-\mu$ is a positive measure, or
        equivalently if $\mu(C)\leq\mu'(C)$ for every $C\in\cB(E)$.
        Thus $0\leq Q_\nu[f]\leq Q_\nu[\1_B]$ means
        \begin{equation*}
            0\leq\int_Y f(y)P_y(C)\,d\nu(y)
                \leq\int_B P_y(C)\,d\nu(y)
            \quad\text{for every }C\in\cB(E).
        \end{equation*}
        Our main hypothesis extracts information about $f$ from this
        domination of its signed mixture.
        \begin{definition}[Strong and weak localization]
            We say that $(\nu, P)$ is weakly localized if there is a countable $\pi$-system $\cA \subset \cB(Y)$ generating $\cB(Y)$ such that, for every $B \in \cA$ and every real $f \in L^2(\nu)$ we have 
            \begin{equation}\label{E: weak_localization}\tag{WL}
                0 \leq Q_{\nu}[f] \leq Q_\nu[\mathbf{1}_B]
                \quad
                \text{ implies }
                \quad
                \int_{Y \setminus B} f(y) d\nu(y) = 0.
            \end{equation}
            We say $(\nu, P)$ is strongly localized if there is such an $\cA$ so that
            \begin{equation}\label{E: strong_localization}\tag{SL}
                0 \leq Q_\nu[f] \leq Q_\nu[\mathbf{1}_B] 
                \quad 
                \text{ implies } 
                \quad 
                f \mathbf{1}_{Y \setminus B} = 0 \quad \nu\text{-a.e.}
            \end{equation}
            for all $B \in \cA$ and $f$ in $L^2(\nu)$.
        \end{definition}

        \begin{remark}\label{R: hierarchy}
            We have the following hierarchy of regularity assumptions on $(\nu, P)$:
            \begin{equation*}
                \eqref{E: strong_localization} \implies \eqref{E: weak_localization} \implies\eqref{E: completeness}.
            \end{equation*}
            This is proved in Lemma~\ref{L: localization_completeness}.
        \end{remark}

        \begin{theorem}\label{T: general_rigidity}
            Let $E$ and $Y$ be Polish spaces, $P$ be a measurable probability kernel from $Y$ to $E$, and $\nu$ be an element of $\cP(Y)$. If \eqref{E: weak_localization} holds, then $\fG_\nu$ is $3$-rigid.
        \end{theorem}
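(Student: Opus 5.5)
The plan is to start from a competitor $\Gamma\in\cP(\cP(E))$ with $\cM_3(\Gamma)=\cM_3(\fG_\nu)$, write $\eta\sim\Gamma$, and build a measurable "parameter" $\widehat Y$ with $\eta=P_{\widehat Y}$ $\Gamma$-a.s. and $\cL(\widehat Y)=\nu$. That gives $\Gamma=\fG_\nu$.

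\textbf{Step 1: an isometry from $\cM_2$.} For bounded measurable $h$ on $E$, write $Kh(y)=\int h\,dP_y$. Equality of second moment measures gives
\begin{equation*}
\EE_\Gamma\Big[\Big(\int h\,d\eta\Big)^2\Big]=\int (Kh)^2\,d\nu .
\end{equation*}
Hence $Kh\mapsto\int h\,d\eta$ extends to a linear isometry $J$ from $\overline{\{Kh\}}\subset L^2(\nu)$ into $L^2(\Gamma)$. By Remark~\ref{R: hierarchy}, \eqref{E: weak_localization} implies \eqref{E: completeness}. Lemma~\ref{L: Kh_dense} then says $\{Kh\}$ is dense in $L^2(\nu)$, so $J$ is defined on all of $L^2(\nu)$. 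For $B\in\cA$ set $Z_B:=J\1_B$; note $Z_Y=1$.

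\textbf{Step 2: two identities.} Let $H_\Gamma$ be the range of $J$, which is closed. It contains $\eta(C)=J(K\1_C)$ for every $C$. Therefore, for any $W\in L^2(\Gamma)$, the orthogonal projection of $W$ onto $H_\Gamma$ equals $Jf$ for some $f\in L^2(\nu)$, and
\begin{equation*}
\EE_\Gamma[W\eta(C)]=\langle f,K\1_C\rangle_\nu=Q_\nu[f](C).
\end{equation*}
Next approximate $\1_B$ by $Kh_n$ in $L^2(\nu)$. Using the third moment measure, $\EE_\Gamma[(\int h_n d\eta)^2\eta(C)]=Q_\nu[(Kh_n)^2](C)$. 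Passing to the limit (Fatou plus $L^1$ convergence of the squares) should give
\begin{equation*}
\EE_\Gamma[Z_B^2\,\eta(C)]=Q_\nu[\1_B](C)=\EE_\Gamma[Z_B\,\eta(C)].
\end{equation*}
This is the only place $\cM_3$ is used.

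\textbf{Step 3: localization forces $Z_B$ to be an indicator.} This is the main obstacle. First try $W:=\min(Z_B^2,Z_B^+,1)$, which is bounded with $0\le W\le Z_B^2$. By Step 2, $0\le Q_\nu[f]\le Q_\nu[\1_B]$ for its projection $f$. Then \eqref{E: weak_localization} gives $\int_{B^c}f\,d\nu=0$, that is, $\EE_\Gamma[W(1-Z_B)]=0$.

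To conclude $Z_B\in\{0,1\}$ a.s., combine this with the identity $\EE[Z_B(1-Z_B)\eta(C)]=0$ for all $C$ (take $C=E$). Run the same argument for $Y\setminus B$; this needs $Z_{B^c}=1-Z_B$ and localization for $B^c$, or else use several truncations $W$. If the particular truncation does not close the argument, vary $W$ over the family $\min(Z_B^2,tZ_B^+,1)$, $t>0$, to extract that the law of $Z_B$ puts no mass off $\{0,1\}$. I expect the real work of the proof to sit here, in choosing test functions $W$ cleverly.

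\textbf{Step 4: assemble the parameter.} Once each $Z_B$ is an indicator $\1_{A_B}$, $J$ is multiplicative on $\{\1_B:B\in\cA\}$. Indeed $\EE[Z_BZ_{B'}]=\nu(B\cap B')$ and $Z_{B\cap B'}$ are both determined by the isometry, and this should identify $A_{B\cap B'}=A_B\cap A_{B'}$ a.s. So $B\mapsto A_B$ is, modulo null sets, a $\sigma$-homomorphism from $\cB(Y)$ into $\Sigma$, extended by the $\pi$-$\lambda$ theorem. Because $Y$ is Polish and $\cA$ is countable, this homomorphism is induced by a random variable $\widehat Y$ with $\PP(\widehat Y\in B)=\nu(B)$.

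Finally, $J$ intertwines: $\eta(C)=J(K\1_C)=P_{\widehat Y}(C)$ a.s. for each $C$ in a countable generating class. Hence $\eta=P_{\widehat Y}$ a.s., and therefore $\Gamma=\fG_\nu$.
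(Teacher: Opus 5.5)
Your Steps 1, 2 and 4 follow the paper's architecture. Your $J$ is the paper's isometry $W$, your projection $f$ is $J^*W$, and $\EE[Z_B^2\eta(C)]=Q_\nu[\1_B](C)$ is the compression identity $W^*M^\PP_{\eta(h)}W=M^\nu_{K_h}$ tested at $\1_B$.

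The gap is Step 3, which carries the entire content of the theorem and which you leave open. A single weight such as $\min(Z_B^2,Z_B^+,1)$ yields only one scalar equation $\EE[W(1-Z_B)]=0$. The identity $\EE[Z_B(1-Z_B)]=0$ you propose to combine it with is just $\EE Z_B=\EE Z_B^2=\nu(B)$, which every isometry with $J\1=\1$ satisfies. Your fallbacks do not close the argument either:
\begin{itemize}
\item Localization for $Y\setminus B$ is not part of the hypothesis. \eqref{E: weak_localization} is assumed only for $B$ in a $\pi$-system $\cA$, which is not closed under complements (it consists of upper orthants in Corollary~\ref{C: exponential_rigidity}).
\item The family $W_t=\min(Z_B^2,tZ_B^+,1)$ produces only the scalar equations $\EE[g_t(Z_B)]=0$, where $g_t(z)=\min(z^2,tz^+,1)(1-z)$. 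These do not force $Z_B\in\{0,1\}$. For $b>1$, put atoms at $b^{-1}$ and $b$ with masses $r=b^3s$ and $s$, and the remaining mass on $\{0,1\}$. Then $g_t(b^{-1})=b^{-1}\min(t,b^{-1})(1-b^{-1})$ and $g_t(b)=b\min(t,b^{-1})(1-b)$. So $\EE[g_t(Z_B)]$ and $\EE[Z_B(1-Z_B)]$ are both multiples of $r\,b^{-1}(1-b^{-1})+s\,b(1-b)=0$.
\end{itemize}

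The missing idea is that \eqref{E: weak_localization} applies to every bounded weight $W$ with $0\le W\le Z_B^2$. You may therefore localize on arbitrary events, rather than restrict to functions of $Z_B$ from a fixed family. The paper's choice is $W=\1_DZ_B^2$ for events $D$ on which $Z_B$ is bounded. Your Steps 2 and 3 then give $\EE[\1_DZ_B^2(1-Z_B)]=0$ for all such $D$. Take $D=\{|Z_B|\le N,\ Z_B^2(1-Z_B)>0\}$, then the analogous set with $<0$, and let $N\to\infty$. This yields $Z_B^2(1-Z_B)=0$ almost surely, i.e.\ $Z_B\in\{0,1\}$. (Alternatively, $W=\1_D\min(Z_B^2,1)$ for arbitrary $D$ gives $\min(Z_B^2,1)(1-Z_B)=0$ almost surely directly.)

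After that, Step 4 goes through as in Lemma~\ref{L: point_realization}. Your multiplicativity argument on $\cA$ is correct, but two pieces still need to be written out:
\begin{itemize}
\item the extension from $\cA$ to $\cB(Y)$, via a Dynkin-system argument on the class of $B$ for which $J\1_B$ is an indicator;
\item the construction of $\widehat Y$, e.g.\ via a Borel isomorphism of $Y$ onto a subset of $[0,1]$.
\end{itemize}
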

        \begin{remark}
            More explicitly, Theorem~\ref{T: general_rigidity} says that if a random probability measure $\eta$ satisfies
            \begin{equation}\label{E: three_rigidity}
                \EE[\eta^{\otimes 3}] = \int_Y P_y^{\otimes 3} d\nu(y),
            \end{equation}
            then, on the probability space carrying $\eta$, there is a measurable $Y$-valued random variable $A$ with law $\nu$ such that $\eta = P_A$ almost surely.

            In general, $3$ cannot be improved to $2$ here. See Proposition~\ref{P: exponential_two_nonrigidity} for an example.
        \end{remark}
        Exponential mixtures give the principal applications in
        Subsection~\ref{SS: exponential_mixtures_intro}. Uniform laws
        with a varying upper endpoint give a non-exponential example
        (Example~\ref{Ex: uniform_localization}).

        The completeness property \eqref{E: completeness} alone ensures rigidity at order four.

        \begin{theorem}\label{T: completeness_rigidity}
             Let $E$ and $Y$ be Polish spaces, $P$ be a measurable probability kernel from $Y$ to $E$, and $\nu$ be an element of $\cP(Y)$. If $(\nu, P)$ satisfy the completeness property \eqref{E: completeness}, then $\fG_\nu$ is $4$-rigid.
        \end{theorem}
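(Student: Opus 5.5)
The plan is to construct, from a random probability measure $\eta$ with $\EE[\eta^{\otimes 4}]=\cM_4(\fG_\nu)$, an isometric and multiplicative operator $T$ from $L^2(\nu)$ into $L^2(\Omega)$. Such an operator is induced by a random variable $A$ with law $\nu$, and we will check that $\eta=P_A$ almost surely. Fix $\Gamma$ with $\cM_4(\Gamma)=\cM_4(\fG_\nu)$ and let $\eta\sim\Gamma$ on $(\Omega,\Sigma,\PP)$. Since the lower moment measures are marginals of $\cM_4$, the moments of $\eta$ of orders $1,\dots,4$ agree with those of $P_A$ for $A\sim\nu$. For bounded measurable $g$ on $E$ write $\Psi g(y):=P_y(g)\in L^2(\nu)$ and $\Phi g:=\eta(g)\in L^2(\Omega)$.

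\emph{Step 1 (linear isometry).} From second moments,
\begin{equation*}
\EE[\eta(g)\eta(h)]=\int_Y P_y(g)P_y(h)\,d\nu(y),
\end{equation*}
so $T(\Psi g):=\Phi g$ is a well-defined linear isometry on $\{\Psi g\}$. By completeness \eqref{E: completeness}, via the dense-range reformulation in Lemma~\ref{L: Kh_dense}, this subspace is dense in $L^2(\nu)$. Hence $T$ extends to an isometry $T:L^2(\nu)\to L^2(\Omega)$ with $T\1=1$.

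\emph{Step 2 (product consistency using order four).} For bounded $g,h,k$ I will expand
\begin{equation*}
\norm{\eta(g)\eta(h)-\eta(k)}_{L^2(\PP)}^2
=\EE[\eta(g)^2\eta(h)^2]-2\EE[\eta(g)\eta(h)\eta(k)]+\EE[\eta(k)^2].
\end{equation*}
Every term is a moment of order at most four, so the expression equals $\norm{\Psi g\,\Psi h-\Psi k}_{L^2(\nu)}^2$. Choosing $\Psi k_n\to\Psi g\,\Psi h$ in $L^2(\nu)$, which is possible by density, gives
\begin{equation*}
T(\Psi g\,\Psi h)=\lim_n\eta(k_n)=\eta(g)\eta(h)=T(\Psi g)\,T(\Psi h).
\end{equation*}
This is exactly where four coordinates are used: the squared product term is a fourth moment.

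\emph{Step 3 (extending multiplicativity), the main obstacle.} I must pass from $a,b\in\Psi(\text{bounded})$ to arbitrary $a,b\in L^\infty(\nu)$, and $L^2$-continuity of $T$ does not control products directly. The plan is to treat both sides separately.
\begin{itemize}
\item[(i)] The right side $(a,b)\mapsto T(a)T(b)$ is jointly continuous into $L^1(\PP)$, since $T$ is an $L^2$-isometry.
\item[(ii)] For the left side $T(ab)$, fix $b=\Psi h$ and approximate $a\in L^\infty$ by $\Psi g_n$. I would truncate $g_n$ so that $\sup_n\norm{\Psi g_n}_\infty<\infty$; this is possible because $\Psi$ maps bounded functions to functions with the same bound, and a clipping/convexity argument should keep the approximation in $L^2$. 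Then $\Psi g_n\,b\to ab$ in $L^2(\nu)$.
\item[(iii)] Do the same in the second argument.
\end{itemize}
Once $T(ab)=T(a)T(b)$ holds on $L^\infty(\nu)$, each indicator satisfies $T(\1_B)^2=T(\1_B)$. So $B\mapsto\{T\1_B=1\}$ is a measure-preserving $\sigma$-homomorphism of measure algebras, with $\sigma$-additivity coming from $L^2$-continuity. Since $Y$ is Polish (standard Borel), this homomorphism is realized by a measurable $A:\Omega\to Y$ with law $\nu$, and $Tf=f(A)$ almost surely.

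\emph{Step 4 (conclusion).} For every bounded $g$ we get $\eta(g)=T(\Psi g)=P_A(g)$ almost surely. Taking $g$ in a countable convergence-determining family on the Polish space $E$ gives $\eta=P_A$ almost surely. Hence $\Gamma=\cL(P_A)=\fG_\nu$, which proves $4$-rigidity.
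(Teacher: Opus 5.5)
Your proof is correct and follows the paper's route. Step 1 is Lemma~\ref{L: moment_isometry}(a). Your Step 2 expansion uses the same moments of orders two through four that the paper packages as the operator identities behind the intertwining \eqref{E: exact_intertwining}. Multiplication by $\Psi h$ and by $\eta(h)$ are bounded operators, so your identity extends from the dense subspace to exactly that intertwining. Step 3 with $a=b=\1_B$ is the paper's proof that $W\1_B$ is idempotent. The measure-algebra realization you cite is what Lemma~\ref{L: point_realization} proves directly.

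One correction: drop the truncation in Step 3(ii). It is unnecessary, and the clipping argument you sketch, which bounds $\norm{g_n}_\infty$, fails in general. Suppose $\Psi g_n\to\1_B$ in $L^2(\nu)$ with $\norm{g_n}_\infty\le C$. Then $\int_Bf\,d\nu=\lim_n\int_E g_n\,dQ_\nu[f]\le C\norm{Q_\nu[f]}_{\mathrm{TV}}$ for every $f\in L^2(\nu)$. Now take the complete kernel $P_y=\mathcal N(y,1)$, with $\nu$ uniform on $[0,1]$ and $B=[0,\tfrac12]$. The dipoles $f_n=n\bigl(\1_{[1/2-1/n,\,1/2]}-\1_{(1/2,\,1/2+1/n]}\bigr)$ have $\int_Bf_n\,d\nu=1$ but $\norm{Q_\nu[f_n]}_{\mathrm{TV}}=O(1/n)$, so no such $C$ exists.

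What makes Step 3 work is simply that the factor you hold fixed is bounded. In (ii), with $b=\Psi h$, you have $\norm{\Psi g_n\,b-ab}_{L^2(\nu)}\leq\norm{h}_\infty\norm{\Psi g_n-a}_{L^2(\nu)}$. In (iii), with $a\in L^\infty(\nu)$ fixed and $\Psi h_n\to b$, you have $\norm{a\,\Psi h_n-ab}_{L^2(\nu)}\leq\norm{a}_\infty\norm{\Psi h_n-b}_{L^2(\nu)}$. In both cases the product side converges in $L^1(\PP)$ by Cauchy--Schwarz, as in your (i). These are exactly the two estimates in the paper's proof of Theorem~\ref{T: completeness_rigidity}.
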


        \begin{remark}
            In particular, Theorem~\ref{T: completeness_rigidity}
            recovers the four-coordinate Gaussian rigidity theorem
            \cite[Theorem~1.4]{newman26}; see
            Corollary~\ref{C: translation_gaussian_rigidity} for the proof. More generally, Corollary~\ref{C: translation_gaussian_rigidity} provides a sufficient condition for completeness of translation kernels.
        \end{remark}
        
        \begin{remark}\label{R: completeness_sharpness}
            The number four in
            Theorem~\ref{T: completeness_rigidity} cannot in general
            be replaced by three. Let $E=\{0,1\}$, $Y=[0,1]$, and
            \begin{equation*}
                P_y=(1-y)\delta_0+y\delta_1.
            \end{equation*}
            Define
            \begin{equation*}
                p_\pm=\frac{1}{2}\pa{1\pm\frac{1}{\sqrt{3}}},
                \quad
                \nu=\frac{1}{2}\delta_{p_-}
                     +\frac{1}{2}\delta_{p_+},
            \end{equation*}
            and let $\widetilde\nu$ be the uniform probability measure
            on $[0,1]$. The component measures $P_{p_-}$ and $P_{p_+}$
            are linearly independent, so $(\nu,P)$ satisfies
            \eqref{E: completeness}. However,
            \begin{equation*}
                \int_0^1 y^j\,d\nu(y)
                =\int_0^1 y^j\,d\widetilde\nu(y)
                =\frac{1}{j+1},
                \quad j=0,1,2,3.
            \end{equation*}
            Each mass assigned by $P_y^{\otimes3}$ to a point of
            $\{0,1\}^3$ is a polynomial in $y$ of degree at most three.
            Consequently,
            \begin{equation*}
                \cM_3\pa{\fG_\nu}
                =\cM_3\pa{\fG_{\widetilde\nu}},
                \quad
                \text{ yet }
                \quad
                \fG_\nu\ne\fG_{\widetilde\nu}.
            \end{equation*}
            Thus $\fG_\nu$ is $4$-rigid but not $3$-rigid.
        \end{remark}

    \subsection{Rigidity of exponential mixtures}\label{SS: exponential_mixtures_intro}

        A rich class of hierarchical models that exhibit the rigidity properties described in Theorem~\ref{T: general_rigidity} are exponential mixtures. Let $m$ be a $\sigma$-finite measure on $E$, $T: E \to [0,\infty)^d$ a Borel measurable map, and $Y$ a Polish subset under the subspace topology of $(0,\infty)^d$. Let $\rho:=T_*m$ be the pushforward of $m$ by $T$.
        Throughout the exponential-mixture arguments,
        $\ip{y}{t}:=\sum_{i=1}^d y_it_i$ denotes the Euclidean inner
        product on $\R^d$.

        Assume that
        \begin{equation}\label{E: finite_partition_function}
            0 < Z(y) := \int_E \exp\pa{-\ip{y}{T(x)}}dm(x) = \int_{[0,\infty)^d} \exp\pa{-\ip{y}{t}}d\rho(t) < \infty\
            \quad
            \text{for all $y \in Y$.}
        \end{equation}
        Then we may define the exponential kernel $P_y$ as the measure absolutely continuous with respect to $m$ with density
        \begin{equation}\label{E: exponential_kernel}
            \frac{dP_y}{dm}(x) := \frac{\exp\pa{-\ip{y}{T(x)}}}{Z(y)}.
        \end{equation}
        This is a measurable probability kernel. For any $\nu \in \cP(Y)$ its directing law is the exponential mixture
        \begin{equation*}
            \fG_\nu = \int_Y \delta_{P_y} d\nu(y)
        \end{equation*}

        For exponential kernels, the density of a signed mixture is given by
        \begin{equation*}
            \frac{dQ_\nu[f]}{dm}(x)
            =(\mathcal L_\nu f)(T(x)),
        \end{equation*}
        where
        \begin{equation}\label{E: exponential_transform}
            (\mathcal L_\nu f)(t)
            :=\int_Y\frac{f(y)}{Z(y)}
                        e^{-\langle y,t\rangle}\,d\nu(y),
        \end{equation}
        the integral is absolutely convergent for $\rho$-almost every
        $t$, and the density identity holds $m$-almost everywhere.
        Thus $0\leq Q_\nu[f]\leq Q_\nu[\mathbf1_B]$ gives
        $0\leq\mathcal L_\nu f\leq\mathcal L_\nu\mathbf1_B$
        $\rho$-almost everywhere. Strong localization requires these
        inequalities to force $f$ to vanish $\nu$-almost everywhere outside $B$.
        
        For one parameter and $B=Y\cap[b,\infty)$, the desired conclusion
        excludes all contributions from parameters below $b$.
        A finite signed measure $\sigma$ concentrated on $[b,\infty)$
        satisfies
        \begin{equation*}
            \left|\int_{[0,\infty)}e^{-\lambda y}\,\sigma(dy)\right|
            \leq\|\sigma\|_{\mathrm{TV}}e^{-b\lambda}.
        \end{equation*}
        The relevant converse asks whether such bounds on the Laplace
        transform of $\sigma$ force $\sigma|_{[0,b)}=0$.
        The following proposition characterizes the uniformly separated exponent
        sequences for which this converse holds.

        \begin{proposition}\label{L: laplace_support}
            Let $0=\lambda_0<\lambda_1<\cdots$ satisfy
            \begin{equation*}
                \inf_{n\geq0}(\lambda_{n+1}-\lambda_n)>0.
            \end{equation*}
            For a finite signed Borel measure $\sigma$ on $[0,\infty)$,
            we define the Laplace transform
            \begin{equation*}
                \widehat\sigma(\lambda)
                :=\int_{[0,\infty)}e^{-\lambda y}\,d\sigma(y).
            \end{equation*}
            The following are equivalent:
            \begin{enumerate}
                \item $\displaystyle\sum_{n=1}^{\infty}
                            \frac{1}{\lambda_n}=\infty$.
                \item For every finite signed Borel measure $\sigma$
                    on $[0,\infty)$ and every $b\geq0$,
                    \begin{equation}\label{E: laplace_support}
                        \sigma|_{[0,b)}=0
                        \quad\Longleftrightarrow\quad
                        \sup_{n\geq1}e^{b\lambda_n}
                            |\widehat\sigma(\lambda_n)|<\infty.
                    \end{equation}
                \item For every nonzero finite signed Borel measure
                    $\sigma$ on $[0,\infty)$,
                    \begin{equation*}
                        \inf\operatorname{supp}|\sigma|
                        =-\limsup_{n\to\infty}
                            \frac{1}{\lambda_n}
                            \log|\widehat\sigma(\lambda_n)|,
                    \end{equation*}
                    where $\log0:=-\infty$.
            \end{enumerate}
        \end{proposition}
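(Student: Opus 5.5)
The plan is to prove $(1)\Rightarrow(2)\Rightarrow(3)\Rightarrow(1)$. Throughout I use the substitution $x=e^{-y}$. Pushing $\sigma$ forward gives a finite signed measure $\mu$ on $(0,1]$, regarded as a measure on $[0,1]$ with $\mu(\{0\})=0$, and then
\begin{equation*}
    \widehat\sigma(\lambda)=\int_{[0,1]}x^\lambda\,d\mu(x).
\end{equation*}
So statements about $\widehat\sigma(\lambda_n)$ become statements about the M\"untz moments of $\mu$.

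\emph{$(3)\Rightarrow(1)$, by contraposition.} Suppose $\sum 1/\lambda_n<\infty$. By the M\"untz--Sz\'asz theorem, the span of $\{x^{\lambda_n}\}_{n\ge0}$ (note $\lambda_0=0$) is not dense in $C[0,1]$. Hahn--Banach and Riesz then give a nonzero finite signed measure $\mu$ on $[0,1]$ annihilating every $x^{\lambda_n}$. Letting $n\to\infty$, dominated convergence gives $\int x^{\lambda_n}\,d\mu\to\mu(\{0\})$. The left side is identically zero, so $\mu(\{0\})=0$. Hence $\mu$ pulls back to a nonzero $\sigma$ on $[0,\infty)$ with $\widehat\sigma(\lambda_n)=0$ for all $n\ge1$. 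The right side of (3) is then $+\infty$, while $\inf\operatorname{supp}|\sigma|<\infty$, so (3) fails.

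\emph{$(2)\Rightarrow(3)$.} Let $s=\inf\operatorname{supp}|\sigma|$ and $L=-\limsup_n\lambda_n^{-1}\log|\widehat\sigma(\lambda_n)|$.
\begin{itemize}
    \item The displayed bound $|\widehat\sigma(\lambda)|\le\|\sigma\|_{\mathrm{TV}}e^{-s\lambda}$ gives $L\ge s$.
    \item Conversely, if $L>s$, pick $b\in(s,L)$. Then $e^{b\lambda_n}|\widehat\sigma(\lambda_n)|\to0$, so this sequence is bounded. By (2), $\sigma|_{[0,b)}=0$, which contradicts $b>s$.
\end{itemize}
This is pure bookkeeping.

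\emph{$(1)\Rightarrow(2)$.} The forward implication in \eqref{E: laplace_support} is the trivial bound. For the converse, split $\sigma=\sigma_1+\sigma_2$ with $\sigma_1=\sigma|_{[0,b)}$. Define
\begin{equation*}
    H(z):=e^{bz}\widehat{\sigma_1}(z)=\int_{[0,b)}e^{(b-y)z}\,d\sigma_1(y).
\end{equation*}
Then $H$ has the following properties.
\begin{itemize}
    \item $H$ is entire of exponential type at most $b$, and $|H(z)|\le\|\sigma_1\|e^{b\operatorname{Re}z}$ for $\operatorname{Re}z\ge0$.
    \item $|H|\le\|\sigma_1\|$ on $i\R$.
    \item $H$ is bounded on $\{\lambda_n\}$, since $e^{bz}\widehat{\sigma_2}(z)$ is bounded on the right half plane and $e^{b\lambda_n}\widehat\sigma(\lambda_n)$ is bounded by hypothesis.
    \item Moreover $H(t)=\int_{(0,b]}e^{ut}\,d\tilde\sigma_1(u)$, where $\tilde\sigma_1$ is the pushforward of $\sigma_1$ under $y\mapsto b-y$.
\end{itemize}
The goal is to show $\tilde\sigma_1=0$. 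Pushing further by $u\mapsto e^{-u}$, this is a measure on $[e^{-b},1)$ whose ``negative-power'' moments $\int w^{-\lambda_n}$ stay bounded.

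The intended route is to first upgrade boundedness on $\{\lambda_n\}$ to boundedness on the whole positive axis. The approach I would try first is a Phragm\'en--Lindel\"of argument in the right half plane. Uniform separation of $(\lambda_n)$ together with $\sum1/\lambda_n=\infty$ should allow the construction, via a M\"untz/Blaschke-type canonical product $B$ with zeros at the $\lambda_n$, of a comparison function. Applying the half-plane maximum principle to $H$ modulo interpolation at the nodes then controls $H$ between consecutive nodes. This uses the Blaschke divergence $\sum\operatorname{Re}\lambda_n/(1+|\lambda_n|^2)=\infty$, which is equivalent to (1), to rule out growth.

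Once $H$ is bounded on $[0,\infty)$, the endgame is easy. Boundedness of $t\mapsto\int_{(0,b]}e^{ut}\,d\tilde\sigma_1(u)$ as $t\to\infty$ forces $\tilde\sigma_1$ to vanish near every $u>0$. The argument is to take the largest point of $\operatorname{supp}|\tilde\sigma_1|$, where the exponential growth rate dominates, or equivalently to invert the Laplace transform. Hence $\sigma_1=0$.

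The main obstacle is this interpolation step, passing from the discrete set $\{\lambda_n\}$ to the whole half line for a function of exponential type. It is the only place where the divergence of $\sum1/\lambda_n$ and the uniform separation are genuinely used.
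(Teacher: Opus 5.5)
Your implications (2)$\Rightarrow$(3) and (3)$\Rightarrow$(1) are sound, up to a slip noted below. The substantive direction (1)$\Rightarrow$(2), however, is not proved. Everything rests on the claim that boundedness of $H$ on $\{\lambda_n\}$ upgrades to boundedness on $[0,\infty)$, and for this you give only a heuristic. That claim is not a lemma on the way to the result: combined with your endgame, it is equivalent to the implication itself, so the reduction makes no progress.

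Moreover, the mechanism you sketch cannot be carried out as described.
\begin{itemize}
\item Because (1) makes the Blaschke sum $\sum_n\lambda_n/(1+\lambda_n^2)$ diverge, no Blaschke product vanishes at all the $\lambda_n$. Indeed no nonzero bounded analytic function on the right half-plane does; this is exactly M\"untz uniqueness.
\item For the same reason $(\lambda_n)$ is not an interpolating sequence for bounded analytic functions. So there is no bounded interpolant to subtract ``modulo the nodes''.
\item Entire canonical products with these zeros exist, but they grow on the imaginary axis. For example, $\abs{\prod_n(1-z^2/\lambda_n^2)}=\prod_n(1+y^2/\lambda_n^2)$ at $z=iy$. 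This destroys the boundedness on $i\R$ that a Phragm\'en--Lindel\"of comparison for $H$ needs.
\item Cartwright-type results bound a function of exponential type on a line by its values at nodes, but they require node density comparable to the type. Condition (1) permits zero-density sequences such as $\lambda_n=n\log n$.
\end{itemize}

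What is missing is a genuine support theorem for Laplace transforms sampled along a separated M\"untz sequence. The paper does not derive one; it imports \cite[Theorem~2.8.3]{ArendtEtAl2011}. After rescaling to gaps at least one, that theorem gives
$\inf\supp(|g|\,dt)=-\limsup_n\lambda_n^{-1}\log\abs{\int_0^\infty e^{-\lambda_nt}g(t)\,dt}$
for bounded $g$. The paper applies it to $F(t)=\sigma([0,t])$, using $\int_0^\infty e^{-\lambda t}F(t)\,dt=\widehat\sigma(\lambda)/\lambda$ and $\inf\supp(|F|\,dt)=\inf\supp|\sigma|$ (by right-continuity). It absorbs the extra factor $1/\lambda_n$ via $\log\lambda_n/\lambda_n\to0$. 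This proves (i)$\Rightarrow$(iii) directly, after which (iii)$\Rightarrow$(ii) is elementary bookkeeping similar to yours.

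Two smaller points.
\begin{itemize}
\item In (3)$\Rightarrow$(1), dominated convergence gives $\int x^{\lambda_n}\,d\mu\to\mu(\{1\})$, not $\mu(\{0\})$, and $\mu(\{0\})$ need not vanish. Instead, restrict $\mu$ to $(0,1]$. This does not change $\int x^{\lambda_n}\,d\mu$ for $n\geq1$, because $0^{\lambda_n}=0$. The restriction is nonzero, since otherwise $\mu=c\delta_0$ and $\int 1\,d\mu=0$ forces $c=0$.
\item In your endgame, ``the largest point of the support dominates'' is not automatic for a signed measure. A clean argument: $H$ is bounded on $[0,\infty)$ and on the closed left half-plane, and is of exponential type. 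Phragm\'en--Lindel\"of then bounds it in each right quadrant, so it is a bounded entire function and hence constant by Liouville. The constant is $0$ because $H(x)\to\tilde\sigma_1(\{0\})=0$ as $x\to-\infty$.
\end{itemize}
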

        \begin{remark}
            Under the substitution $u=e^{-y}$, the quantities $\widehat\sigma(\lambda_n)$ become generalized moments. The divergence condition in Proposition~\ref{L: laplace_support} is the classical M\"untz--Sz\'asz condition for density of $\operatorname{span}\{1,u^{\lambda_n}:n\geq1\}$ in $C([0,1])$ \cite{BorweinErdelyi1996}. The proposition additionally recovers a support restriction from exponential bounds on these moments.
        \end{remark}
        
        For the exponential kernel, we impose the corresponding condition
        in each coordinate.
        
        \begin{definition}[M\"untz--Sz\'asz condition]
            \label{D: exponential_MS}
            We say that $\rho$ satisfies the
            M\"untz--Sz\'asz condition if there exist
            $t_0\in[0,\infty)^d$ and a sequence
            $0=\lambda_{i,0}<\lambda_{i,1}<\cdots$ for every $1\leq i\leq d$
            such that
            \begin{equation}\label{E: exponential_MS}\tag{MS}
                \begin{aligned}
                    &\inf_{n\geq0}
                        (\lambda_{i,n+1}-\lambda_{i,n})>0,
                    \quad
                    \sum_{n=1}^{\infty}\frac{1}{\lambda_{i,n}}=\infty,
                    \quad 1\leq i\leq d,
                    \quad \text{and}\\
                    &t_0+
                    \bigl\{
                        (\lambda_{1,n_1},\ldots,\lambda_{d,n_d}):
                        n_1,\ldots,n_d\geq0
                    \bigr\}
                    \subseteq\operatorname{supp}\rho.
                \end{aligned}
            \end{equation}
        \end{definition}

        The support proposition converts \eqref{E: exponential_MS} into
        strong localization and hence rigidity.
        
        \begin{corollary}\label{C: exponential_rigidity}
            Suppose that \eqref{E: finite_partition_function} and
            \eqref{E: exponential_MS} hold. Then, for every
            $\nu\in\cP(Y)$, the exponential kernel
            \eqref{E: exponential_kernel} satisfies strong localization
            \eqref{E: strong_localization}.
            In particular, $\fG_\nu$ is $3$-rigid.
        \end{corollary}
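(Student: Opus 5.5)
The plan is to verify \eqref{E: strong_localization} directly for a well-chosen countable $\pi$-system $\cA$. The rigidity conclusion then follows from Remark~\ref{R: hierarchy} and Theorem~\ref{T: general_rigidity}. Take $\cA$ to consist of $\emptyset$, $Y$, and the orthant-type sets
\begin{equation*}
    B_b := Y\cap\prod_{i=1}^d[b_i,\infty),\qquad b\in\mathbb{Q}_{\geq0}^d .
\end{equation*}
These are closed under intersection, since $B_b\cap B_{b'}=B_{b\vee b'}$. They generate the Borel $\sigma$-algebra of $Y\subseteq(0,\infty)^d$, since complements and differences produce rational boxes. Now fix $B=B_b$ and a real $f\in L^2(\nu)$ with $0\leq Q_\nu[f]\leq Q_\nu[\1_B]$. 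By the density formula preceding \eqref{E: exponential_transform}, this gives $0\leq \cL_\nu f\leq\cL_\nu\1_B$ $\rho$-almost everywhere. We must show that $f\1_{Y\setminus B}=0$ $\nu$-almost everywhere.

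Let $\sigma:=f Z^{-1}e^{-\ip{\cdot}{t_0}}\,\nu$, a signed measure on $(0,\infty)^d$. First check that it is finite; this is where \eqref{E: finite_partition_function} and the support hypothesis enter. Since $t_0\in\supp\rho$, we expect $Z(y)\geq c\,e^{-\ip{y}{t_0}-\varepsilon|y|}$ on suitable regions. If finiteness of $\sigma$ fails, the first fallback is to localize: restrict $\nu$ to bounded sets $Y\cap[0,R]^d$ by truncating, or weight by $e^{-\varepsilon|y|}$. Second, upgrade the $\rho$-a.e.\ inequalities to pointwise inequalities at the lattice points $t_0+\lambda_{n}$, with $\lambda_n:=(\lambda_{1,n_1},\dots,\lambda_{d,n_d})$. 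Both sides are continuous in $t$ on $[0,\infty)^d$ by dominated convergence; this also needs care at $t$ near $t_0$, using $Z$. Since these points lie in $\supp\rho$, every neighborhood of them has positive $\rho$-mass, so the inequalities hold there. This yields
\begin{equation*}
    0\leq\widehat\sigma(\lambda_n)\leq\int_{B}e^{-\ip{y}{\lambda_n}}\,\frac{e^{-\ip{y}{t_0}}}{Z(y)}\,d\nu(y)\leq C\,e^{-\ip{b}{\lambda_n}} .
\end{equation*}

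Third, apply Proposition~\ref{L: laplace_support} one coordinate at a time, which reduces to the scalar case. Fix a coordinate $i$. Set all other indices $n_j=0$, so $\lambda_{j,0}=0$, and look at the pushforward $\sigma_i$ of $\sigma$ under $y\mapsto y_i$. The bound reads $\sup_n e^{b_i\lambda_{i,n}}|\widehat{\sigma_i}(\lambda_{i,n})|<\infty$. By (2) of Proposition~\ref{L: laplace_support}, $\sigma_i|_{[0,b_i)}=0$. This alone is not enough, because a projection can vanish through cancellation. So apply the same argument to the measures $e^{-\ip{y'}{\lambda'}}\sigma$, where $y'$ denotes the remaining coordinates and $\lambda'$ ranges over their lattice exponents. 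This gives, for every $\lambda'$ in the lattice, that the measure $\int e^{-\ip{y'}{\lambda'}}\,\sigma(dy_i,dy')$ vanishes on $\{y_i<b_i\}$. Finally use completeness in the other coordinates. The functions $e^{-\ip{y'}{\lambda'}}$ with Müntz lattice exponents span a dense subset of $C_0$ after the change of variables $u=e^{-y'}$, by the multivariate Müntz–Szász density (a product of one-dimensional ones). Hence $\sigma|_{\{y_i<b_i\}}=0$ as a measure on $Y$. Since $e^{-\ip{y}{t_0}}/Z>0$, we get $f=0$ $\nu$-a.e.\ on $\{y_i<b_i\}$. The union over $i$ is $Y\setminus B$, which proves \eqref{E: strong_localization}.

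I expect the main obstacle to be the second step together with the finiteness of $\sigma$. That is, passing from $\rho$-a.e.\ inequalities to pointwise values at the points $t_0+\lambda_n$ requires continuity of $\cL_\nu f$ up to those points and integrability of $|f|e^{-\ip{y}{t_0}}/Z$. The multivariate density step is the second delicate point. If product Müntz density is awkward, I would instead iterate Proposition~\ref{L: laplace_support}(2) with $b=0$ coordinate by coordinate, using the equivalence that a measure with all lattice Laplace values zero vanishes (the $b=0$, $\sigma|_{[0,0)}$ direction combined with item (3)).
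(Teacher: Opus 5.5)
Your choice of $\cA$ and your third step (weighted coordinate projections, the scalar Proposition~\ref{L: laplace_support}, then product M\"untz density in the remaining coordinates) follow the paper's route through Lemma~\ref{L: multivariate_laplace_support}. The genuine gap is the one you flagged in your first two steps, and neither fallback closes it: $t_0$ is the wrong base point.

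The lower bound you anticipate does hold on all of $Y$. For any $r>t_0$ coordinatewise and a small neighbourhood $V$ of $t_0$, one has $Z(y)\geq\rho(V)e^{-\ip{y}{r}}$. But this only gives $Z(y)^{-1}e^{-\ip{y}{t_0}}\leq\rho(V)^{-1}e^{\ip{y}{r-t_0}}$, which is unbounded. In the Weibull row of Table~\ref{Tab: exponential_families} with $t_0=0$, for instance, $Z(y)^{-1}=y$. So your $\sigma$ need not be a finite measure, and your constant $C=\mathcal L_\nu\1_B(t_0)$ is infinite whenever $\nu$ has no first moment. Proposition~\ref{L: laplace_support} then cannot be applied at all.

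For the same reason, $\mathcal L_\nu f$ is not in general continuous on $[0,\infty)^d$. Integrable domination is generally available only on the open orthant $U=t_0+(0,\infty)^d$. So you cannot upgrade the $\rho$-a.e.\ inequalities at the grid points $t_0+\lambda_{\boldsymbol n}$ with some $n_i=0$. These points lie on $\partial U$, and your third step uses them when you set $n_j=0$.

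Your fallbacks do not repair this. Truncating $\nu$ to a box destroys the hypothesis $0\leq Q_\nu[f]\leq Q_\nu[\1_B]$, which constrains the full mixture. An extra weight $e^{-\varepsilon\sum_iy_i}$ amounts to evaluating $\mathcal L_\nu f$ at $t_0+\varepsilon(1,\ldots,1)+\lambda_{\boldsymbol n}$, and these need not be points of $\supp\rho$. For example, let $\rho$ be Lebesgue measure on $[0,1]$ plus unit atoms at the positive integers, with $t_0=0$ and $\lambda_n=n$. Then $\varepsilon+n\notin\supp\rho$ for $n\geq1$ and $0<\varepsilon<1$, while still $Z(y)^{-1}\sim y$ as $y\to\infty$.

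The missing idea is to re-base at a grid point strictly inside $U$. Put $s:=t_0+(\lambda_{1,1},\ldots,\lambda_{d,1})$ and $\mu_{i,n}:=\lambda_{i,n+1}-\lambda_{i,1}$. The shifted sequences start at $0$ and remain uniformly separated. They have divergent reciprocal sums because $0<\mu_{i,n}<\lambda_{i,n+1}$ for $n\geq1$. Moreover, $s+\mu_{\boldsymbol n}=t_0+(\lambda_{1,n_1+1},\ldots,\lambda_{d,n_d+1})\in\supp\rho\cap U$ for every $\boldsymbol n\in\mathbb N_0^d$.

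For $c\in U$, the lower bound with $t_0<r<c$ gives $Z(y)^{-1}e^{-\ip{y}{c}}\leq\rho(V)^{-1}$ on $Y$ (Lemma~\ref{L: normalizer_bound}). This makes $\sigma_s:=fZ^{-1}e^{-\ip{\cdot}{s}}\nu$ a finite measure and $C_s:=\int_BZ(y)^{-1}e^{-\ip{y}{s}}\,d\nu(y)$ finite. It also supplies locally uniform domination, hence continuity of $\mathcal L_\nu f$ and $\mathcal L_\nu\1_B$ on $U$. The inequalities then hold pointwise at every $s+\mu_{\boldsymbol n}$, which gives $\abs{\int e^{-\ip{y}{\mu_{\boldsymbol n}}}\,d\sigma_s(y)}\leq C_se^{-\ip{b}{\mu_{\boldsymbol n}}}$ for all $\boldsymbol n$. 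Your third step now applies verbatim to $\sigma_s$ and the exponents $\mu_{i,n}$. Strict positivity of $Z^{-1}e^{-\ip{\cdot}{s}}$ then transfers the vanishing of $\sigma_s$ off $\prod_i[b_i,\infty)$ to $f$.
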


        Moreover, Proposition~\ref{P: exponential_two_nonrigidity} demonstrates Corollary~\ref{C: exponential_rigidity} is sharp in dimension $d = 1$.

        \begin{proposition}\label{P: exponential_two_nonrigidity}
            Suppose that $d=1$, \eqref{E: finite_partition_function}
            holds, and $T$ is not $m$-almost everywhere constant.
            If $\nu\in\cP(Y)$ is not a point mass, then
            $\mathfrak G_\nu$ is not $2$-rigid.
        \end{proposition}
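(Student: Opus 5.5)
We construct a competitor directing law $\Gamma\neq\fG_\nu$ with $\cM_2(\Gamma)=\cM_2(\fG_\nu)$, obtained by perturbing $\nu$ by a signed measure whose first two moment measures vanish. For $d=1$, the density of $\cM_2(\fG_\nu)$ with respect to $m\otimes m$ is
\begin{equation*}
    (x_1,x_2)\mapsto \int_Y \frac{e^{-y(T(x_1)+T(x_2))}}{Z(y)^2}\,d\nu(y).
\end{equation*}
This depends on $(x_1,x_2)$ only through $s=T(x_1)+T(x_2)$, and $\cM_1$ is a marginal of $\cM_2$. So it suffices to find a signed measure $\sigma$ on $Y$ with $\sigma(Y)=0$, $\nu+\sigma\geq0$, $\sigma\neq0$ and $\int_Y e^{-ys}Z(y)^{-2}\,d\sigma(y)=0$ for all $s$ in the support of $\rho*\rho$. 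That is too restrictive if we insist that the competitor also be a mixture of the kernel $P$. Instead we exploit that competitors are arbitrary: the competitor may use measures outside the family $\{P_y\}$.

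The main plan uses a two-point averaging construction. Since $\nu$ is not a point mass, choose disjoint Borel sets $B_1,B_2\subset Y$ with $\nu(B_i)>0$, and set $\mu_i:=\nu(B_i)^{-1}\int_{B_i}P_y\,d\nu(y)$. For $i=1,2$, $\mu_i$ is a genuine probability measure. The idea is to replace the random measure $\eta$ on $B_1\cup B_2$ by a new random measure $\eta'$, for instance a random mixture $\eta'=\theta\mu_1+(1-\theta)\mu_2$ or a randomization between conditionally averaged kernels, tuned so that $\EE[\eta'^{\otimes2}]$ matches. The simplest version is a variance transfer. Write $\eta=P_A$ and let $\mathcal G$ be a sub-$\sigma$-field. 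Then $\EE[\eta^{\otimes2}]=\EE[\EE[\eta|\mathcal G]^{\otimes2}]+\EE[\mathrm{Cov}]$, where $\mathrm{Cov}$ denotes the conditional covariance measure $\EE[\eta^{\otimes2}|\mathcal G]-\EE[\eta|\mathcal G]^{\otimes2}$. We must therefore realize the same second moment measure by a different random measure.

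A cleaner route for $d=1$ is the following. Pick $y_1<y_2$ in $\supp\nu$ and consider the two-dimensional space spanned by $P_{y_1}$ and $P_{y_2}$. In the direction $D:=P_{y_1}-P_{y_2}$, which is nonzero because $T$ is not $m$-a.e.\ constant, the second moment only sees $\EE[\,\cdot\,]$ and $\EE[\,\cdot^{\otimes2}]$. Two random measures $\alpha P_{y_1}+(1-\alpha)P_{y_2}$ with random weights $\alpha\in[0,1]$ have identical $\cM_2$ whenever the laws of $\alpha$ share their first two moments. This is exactly the Bernoulli mechanism of Remark~\ref{R: completeness_sharpness}, which works with one fewer moment.

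Concretely, the construction runs in three steps.

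\textbf{Step 1 (atomic case).} If $\nu$ has two atoms $y_1\neq y_2$ with masses $a,b>0$, replace the restriction $a\delta_{P_{y_1}}+b\delta_{P_{y_2}}$ by $(a+b)\,\cL(\alpha P_{y_1}+(1-\alpha)P_{y_2})$. Here $\alpha$ is any $[0,1]$-valued variable with $\EE\alpha=a/(a+b)$ and $\EE\alpha^2=a/(a+b)$, which forces $\alpha$ to be Bernoulli and gives nothing new. So we instead choose $\alpha$ with the same first two moments as a different distribution. The correct bookkeeping is that the two-point law of $\alpha$ on $\{0,1\}$ is one of many laws with given first two moments only if the support is enlarged. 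I would therefore use three points $y_1<y_2<y_3$ in $\supp\nu$ and match moments of the mixing weights along the two-dimensional simplex spanned by $P_{y_1},P_{y_2},P_{y_3}$.

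\textbf{Step 2 (general non-point $\nu$).} Take disjoint neighbourhoods $B_1,B_2$ of distinct support points and set $c=\min_i\nu(B_i)/2$. Remove mass $c$ proportionally from each $B_i$, and add back $2c$ times the law of $\alpha\mu_1+(1-\alpha)\mu_2$ with $\alpha$ uniform-ish, where $\mu_i$ are the normalized restrictions. We then need to verify equality of second moments. The quadratic term $\EE[(\alpha\mu_1+(1-\alpha)\mu_2)^{\otimes2}]$ involves $\mu_i\otimes\mu_j$, while the removed part involves $\nu(B_i)^{-1}\int_{B_i}P_y^{\otimes2}d\nu$. These differ by the conditional covariance of $P_A$ on $B_i$, which is nonzero. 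The main obstacle is precisely reconciling this covariance term. I would handle it by choosing the added law as $\cL(P_{A'})$-type randomization *within* each $B_i$ combined with randomness of $\alpha$ across them. Equivalently, matching $\EE[\alpha^2]$ against $\mathrm{Var}$ contributions shows that one needs an extra degree of freedom, which $d=1$ and non-constancy of $T$ supply via a third point.

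\textbf{Step 3 (distinctness).} We verify that the competitor $\Gamma$ charges measures not of the form $P_y$, for instance strict mixtures. This holds because exponential family members with distinct $y$ are linearly independent and a strict mixture of two is not itself exponential when $T$ is nonconstant. Hence $\Gamma\neq\fG_\nu$ while $\cM_2(\Gamma)=\cM_2(\fG_\nu)$, so $\fG_\nu$ is not $2$-rigid.

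I expect the covariance matching in Step 2 to be the crux; if it resists, I would fall back on a direct signed-perturbation argument using that $\{s\mapsto e^{-ys}/Z(y)^2\}$ restricted to finitely many points of $\supp(\rho*\rho)$ cannot separate infinitely many $y$.
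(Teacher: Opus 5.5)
There is a genuine gap: no competitor is actually constructed, and the remedy you propose in Step~1 for the obstruction you correctly find there cannot work. The obstruction is not specific to two atoms. Suppose $\eta=\sum_i Z_iP_{y_i}$ with the $P_{y_i}$ linearly independent and random weights $(Z_i)$ in the simplex. Matching $\cM_2$ gives $\EE[Z_iZ_j]=w_i\1_{\{i=j\}}$, hence $\EE[Z_i(1-Z_i)]=0$ with $0\leq Z_i\leq 1$, so the $Z_i$ are indicators and $\eta=P_A$. This has several consequences:
\begin{itemize}
\item A third support point buys nothing. Moreover, $\nu$ may have only two support points, and if $T$ takes only two values any three $P_y$ are linearly dependent.
\item The random convex combinations of $\mu_1,\mu_2$ in Step~2 leave the covariance mismatch unresolved.
\item The fallback is unavailable when $\nu$ has finitely many atoms. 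It is also unavailable when $\supp\rho=[0,\infty)$, where Laplace uniqueness excludes within-family perturbations, as you observe yourself.
\end{itemize}

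The missing idea is that the competitor must leave the convex hull of the components, and this is exactly where $d=1$ and $T\geq0$ enter. For $a<b$, $Z$ is decreasing and
\[
\frac{dP_b}{dP_a}=\frac{Z(a)}{Z(b)}e^{-(b-a)T}\leq\frac{Z(a)}{Z(b)}=:M,
\]
so $(1+\delta)P_a-\delta P_b\geq(1-\delta(M-1))P_a\geq0$ for $0<\delta<1/(M-1)$. On the line $\mu_\vartheta=P_a+\vartheta(P_b-P_a)$, take the four points $\vartheta\in\{-\delta,0,\tfrac12,1\}$ with weights of signs $(+,-,+,-)$ that annihilate $1,\vartheta,\vartheta^2$ and give $-1$ on $\vartheta^3$. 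This yields a signed measure $\Xi_{P_a,P_b}$ on $\cP(E)$ with $\cM_0=\cM_1=\cM_2=0$ and $\cM_3=-(P_b-P_a)^{\otimes3}$, whose negative part sits only at $P_a$ and $P_b$. In your Step~1 notation (with $y_1<y_2$), $\alpha$ must be allowed to take the value $1+\delta$, slightly outside $[0,1]$. That is precisely what breaks the forced Bernoulli conclusion.

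For general $\nu$, do not average components first; perturb pairwise and then average. Choose compact $I,J\subseteq Y$ with $\max I<\min J$ and $\nu(I),\nu(J)>0$. Then $M=Z(\min I)/Z(\max J)$ works uniformly for $(a,b)\in I\times J$. Set $\Xi:=\int\Xi_{P_a,P_b}\,d\zeta(a,b)$ with $\zeta=\frac{\nu|_I}{\nu(I)}\otimes\frac{\nu|_J}{\nu(J)}$. By Fubini, $\Xi$ keeps the three cancellations. Its negative part is at most $C\fG_\nu$, because the removed mass sits at the individual $P_a$ and $P_b$. Hence $\fG_\nu+\varepsilon\Xi$ is a probability measure for small $\varepsilon>0$, and the conditional-covariance problem of Step~2 never arises.

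Finally, your distinctness argument in Step~3 is false in general. Take $E=\{0,1\}$, $m$ counting measure, $T(x)=x$ and $Y=(0,\infty)$. Every $P_y$ is then Bernoulli, and $\frac12(P_a+P_b)=P_c$ for some $c\in(a,b)$. Distinctness should instead come from the third moment. With $h=e^{-T}$ and $a<b$, $P_b(h)-P_a(h)$ is a positive multiple of $\operatorname{Cov}_{P_a}(e^{-T},e^{-(b-a)T})$. This is strictly positive, since both are strictly decreasing functions of $T$ and $T$ is nonconstant. Therefore $\cM_3(\Xi)(h^{\otimes3})=-\int(P_b(h)-P_a(h))^3\,d\zeta<0$.
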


        The corollary applies to arbitrary mixing laws over the scalar
        families in Table~\ref{Tab: exponential_families} and their
        finite products. For example, a triple sampled conditionally
        iid from a gamma law of fixed shape and random rate determines
        the entire exchangeable sequence, even among unrestricted
        competitors. For every non-point scalar mixing law, order three
        is exact by Proposition~\ref{P: exponential_two_nonrigidity}.

        \begin{table}[!htbp]
            \centering
            \small
            \setlength{\tabcolsep}{3pt}
            \renewcommand{\arraystretch}{1.8}
            \begin{tabular}{@{}p{0.13\textwidth}p{0.17\textwidth}
                p{0.12\textwidth}p{0.14\textwidth}
                p{0.19\textwidth}p{0.16\textwidth}@{}}
                \hline
                Family & Reference measure & $T$ & $Z(y)$
                    & $\rho=T_*m$ & Usual parameter\\
                \hline
                Gaussian
                    & $dx$
                    & $(x-a)^2$
                    & $\sqrt{\frac{\pi}{y}}$
                    & $t^{-\frac{1}{2}}\,dt$
                    & variance $v=\frac{1}{2y}$\\
                Gamma
                    & $x^{a-1}\,dx$
                    & $x$
                    & $\Gamma(a)y^{-a}$
                    & $t^{a-1}\,dt$
                    & rate $y$\\
                Weibull
                    & $ax^{a-1}\,dx$
                    & $x^a$
                    & $y^{-1}$
                    & $dt$
                    & scale $s=y^{-\frac{1}{a}}$\\
                Pareto
                    & $x^{-1}\,dx$
                    & $\log\pa{\frac{x}{\ell}}$
                    & $y^{-1}$
                    & $dt$
                    & tail index $y$\\
                Beta
                    & $x^{-1}(1-x)^{a-1}\,dx$
                    & $-\log x$
                    & $\mathrm B(y,a)$
                    & $(1-e^{-t})^{a-1}\,dt$
                    & shapes $(y,a)$\\
                Negative binomial
                    & $\frac{\Gamma(n+r)}{\Gamma(r)n!}$
                    & $n$
                    & $(1-e^{-y})^{-r}$
                    & $\sum_{n\geq0}
                        \frac{\Gamma(n+r)}{\Gamma(r)n!}\delta_n$
                    & $p=1-e^{-y}$\\
                Poisson
                    & $\frac{M^n}{n!}$
                    & $n$
                    & $e^{Me^{-y}}$
                    & $\sum_{n\geq0}\frac{M^n}{n!}\delta_n$
                    & mean $\theta=Me^{-y}$\\
                \hline
            \end{tabular}
            \caption{Exponential representations of standard scalar
                families. In the last two rows, the reference-measure
                entry is $m(\{n\})$. All parameters other than $y$ are
                fixed.}
            \label{Tab: exponential_families}
        \end{table}

        In the table, $a$ denotes the fixed mean in the Gaussian row
        and a fixed positive shape in the gamma, Weibull, and beta rows.
        The size $r$, lower endpoint $\ell$, and mean bound $M$ are
        fixed and positive. The observation spaces are $\R$ for Gaussian laws,
        $(0,\infty)$ for gamma and Weibull laws, $[\ell,\infty)$ for
        Pareto laws, $(0,1)$ for beta laws, and $\mathbb N_0$ for the
        last two families. The statistic measure $\rho=T_*m$ has support
        $[0,\infty)$ in the first five rows and $\mathbb N_0$ in the last two.
        The displayed statistic densities in the
        first five rows are on $(0,\infty)$, and every scalar parameter
        $y$ ranges over $(0,\infty)$.

        Completeness admits a separate characterization in the scalar
        case. We first consider the full parameter space $Y=(0,\infty)$
        and characterize completeness simultaneously for every mixing
        law. By Theorem~\ref{T: completeness_rigidity}, this also gives
        a sufficient condition for every mixture over the family to
        be $4$-rigid.

        \begin{proposition}\label{P: exponential_completeness}
            Suppose that $d=1$, $Y=(0,\infty)$, and
            \eqref{E: finite_partition_function} holds. Define
            \begin{equation*}
                a:=\min\supp\rho,
                \quad
                \text{ and }
                \quad
                \mathcal S_\rho
                :=\{t-a:t\in\supp\rho,\ t>a\}.
            \end{equation*}
            For $0<u\leq1$, define the weight
            \begin{equation*}
                w_\rho(u)
                :=\int_{[a,\infty)}
                    e^{-(t-a)}u^{t-a}\,d\rho(t)
                =e^a u^{-a}Z(1-\log u),
            \end{equation*}
            and set $w_\rho(0):=\rho(\{a\})$.
            Then $w_\rho$ is continuous on $[0,1]$ and positive
            on $(0,1]$. For every $\lambda\in\mathcal S_\rho$,
            the function $\frac{u^\lambda}{w_\rho(u)}$ extends
            continuously to zero with value zero.

            The following are equivalent:
            \begin{enumerate}
                \item For every $\nu\in\cP((0,\infty))$, the pair
                    $(\nu,P)$ satisfies completeness
                    \eqref{E: completeness}.
                \item
                    \begin{equation}\label{E: weighted_muntz_completeness}
                        \overline{\operatorname{span}
                        \left\{
                            \frac{u^\lambda}{w_\rho(u)}:
                            \lambda\in\mathcal S_\rho
                        \right\}}^{\,\norm{\cdot}_\infty}
                        =
                        \{g\in C[0,1]:g(0)=0\}.
                    \end{equation}
            \end{enumerate}
            Under these equivalent conditions,
            $\fG_\nu$ is $4$-rigid for every
            $\nu\in\cP((0,\infty))$.
        \end{proposition}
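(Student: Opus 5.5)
We plan to reduce completeness to a vanishing statement for finite signed measures and then apply Hahn--Banach duality. First we handle the preliminary claims. For $u\in(0,1]$ the integrand $e^{-(t-a)}u^{t-a}$ is bounded by $1$ on $[a,\infty)$ and is $\rho$-integrable, since $Z(1)<\infty$. Dominated convergence therefore gives continuity on $(0,1]$. As $u\downarrow0$ the integrand tends to $\1_{\{t=a\}}$, which gives $w_\rho(0)=\rho(\{a\})$. Positivity on $(0,1]$ holds because $\rho\neq0$. For $\lambda\in\mathcal S_\rho$ we have $\rho((a,a+\lambda+\varepsilon))>0$ for every $\varepsilon>0$, so $w_\rho(u)\geq c_\varepsilon u^{\lambda+\varepsilon}$ near $0$. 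When $\rho(\{a\})=0$ the boundary value zero needs a sharper comparison: splitting $\rho$ at $a+\lambda$ should give $u^\lambda/w_\rho(u)\to0$. I expect this to be routine but fiddly.

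Next we translate completeness. By \eqref{E: exponential_transform}, $Q_\nu[f]=0$ iff $\mathcal L_\nu f(t)=0$ for $\rho$-a.e.\ $t$. Continuity of $\mathcal L_\nu f$ on $(a,\infty)$, by dominated convergence using $Z(y)\geq e^{-y(a+\delta)}\rho([a,a+\delta])$, upgrades this to $\mathcal L_\nu f=0$ on $\supp\rho$. We substitute $u=e^{s-y}$ for a fixed $s>0$ and first treat $\nu$ supported on $[s,\infty)$, so that $u\in(0,1]$. Writing $t=a+\lambda$, a direct computation gives
\begin{equation*}
\frac{e^{-yt}}{Z(y)}=C_{s,\lambda}\,\frac{u^\lambda}{w_{\rho,s}(u)},
\qquad
w_{\rho,s}(u):=\int e^{-s(t-a)}u^{t-a}\,d\rho(t),
\end{equation*}
with $w_{\rho,1}=w_\rho$. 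Completeness on $[s,\infty)$ then becomes the following statement: every finite signed measure $\mu$ on $(0,1]$ of the form $\mu=(\text{push-forward of }f\,d\nu)$, with $f\in L^2(\nu)\subset L^1(\nu)$, that annihilates all $u^\lambda/w_{\rho,s}$ for $\lambda\in\mathcal S_\rho$ (and also $1/w_{\rho,s}$ when $\rho(\{a\})>0$) vanishes.

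Every finite signed measure on $(0,1]$ arises this way: take $\nu=|\mu|/\|\mu\|$ and $f=d\mu/d|\mu|\cdot\|\mu\|$, which is bounded. The dual of $\{g\in C[0,1]:g(0)=0\}$ is exactly the space of finite signed measures on $(0,1]$. Hence the Hahn--Banach theorem shows that ``complete for all $\nu$ on $[s,\infty)$'' is equivalent to the weighted density \eqref{E: weighted_muntz_completeness} with weight $w_{\rho,s}$; the extra $1/w$ test function, if present, only helps. For $s=1$ this yields (1)$\Rightarrow$(2). The final sentence of the proposition then follows from Theorem~\ref{T: completeness_rigidity}.

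The main obstacle is (2)$\Rightarrow$(1) for mixing laws charging $(0,1)$. This needs density for the weight $w_{\rho,s}$ for every $s>0$, knowing it only for $s=1$. Equivalently, the density condition must be invariant under exponential tilting $\rho\mapsto e^{-ct}\rho$, which shifts the parameter by $c$. Since $\nu=\sum_k\nu|_{[1/k,\infty)}$ and an annihilating $f$ must vanish on each piece once $s=1/k$ is handled, it suffices to treat each fixed $s$. The first attempt will be a direct comparison. We have $w_{\rho,s}(u)=\int e^{-(t-a)}(e^{1-s}u)^{t-a}d\rho$, so $u^\lambda/w_{\rho,s}(u)$ equals a constant times $v^\lambda/w_\rho(v)$ with $v=e^{1-s}u$. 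For $s\geq1$ the variable $v$ stays in $[0,1]$ and restriction preserves density. For $s<1$ we must extend density from $[0,1]$ to $[0,e^{1-s}]$. There we would exploit that $w_\rho$ is itself a Laplace-type mixture of the powers $v^{\lambda}$, $\lambda\in\supp\rho-a$. A Clarkson--Erd\H{o}s style argument on the annihilating measure's generalized Laplace transform, analytic in $\lambda$, should show that a measure on $(0,e^{1-s}]$ annihilating the system must vanish. If this proves too delicate, the fallback is to repeat the duality argument with the transform $\lambda\mapsto\int u^\lambda/w\,d\mu$ as an analytic function on a half-plane and use a uniqueness theorem.
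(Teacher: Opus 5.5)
Your reduction to finite signed mixing measures, the substitution $u=e^{s-y}$ with $s=1$, and the Hahn--Banach duality on $\{g\in C[0,1]:g(0)=0\}$ match the paper. As written, however, they prove (2)$\Rightarrow$(1) only for mixing laws concentrated on $[1,\infty)$, and the step you flag as the main obstacle is left open. Extending density from $[0,1]$ to $[0,e^{1-s}]$ by a Clarkson--Erd\H{o}s or half-plane uniqueness argument is only sketched. It is unclear how it would go for an arbitrary exponent set $\mathcal S_\rho$, which may have finite accumulation points, and an arbitrary weight.

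Your reduction to a fixed $s$ is also invalid. The condition $Q_\nu[f]=0$ is global and does not imply $Q_\nu[f\1_{[1/k,\infty)}]=0$. So even density for every $w_{\rho,s}$ would say nothing about a $\nu$ whose support accumulates at $0$.

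The missing idea is a tilted translation that stays inside the kernel. For a finite signed $\xi$ on $(0,\infty)$, define $\tau\xi$ by $\int h\,d(\tau\xi)=\int h(y+1)\,e^aZ(y+1)Z(y)^{-1}\,d\xi(y)$. Since $\rho$ lives on $[a,\infty)$, the multiplier lies in $(0,1]$. Hence $\tau\xi$ is finite and concentrated on $(1,\infty)$, and $\tau$ is injective. With $F_\xi(t):=\int e^{-ty}Z(y)^{-1}\,d\xi(y)$, a direct computation gives $F_{\tau\xi}(t)=e^{-(t-a)}F_\xi(t)$. So every element of the kernel is carried to a kernel element already covered by the $s=1$ weight, and no extension of density is needed. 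In your variable this is just $v\mapsto v/e$ with density $w_\rho(v/e)/w_\rho(v)\in(0,1]$.

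The converse also has a gap when $\rho(\{a\})>0$. Your remark that the extra test function $1/w_\rho$ ``only helps'' is right for (2)$\Rightarrow$(1) but backwards for (1)$\Rightarrow$(2). Since $m(T=a)=\rho(\{a\})>0$, a Hahn--Banach measure annihilating the weighted monomials, pulled back to $\xi$, contradicts completeness only if also $F_\xi(a)=0$, that is, only if it annihilates $1/w_\rho$. Nothing in your argument forces this, and $1/w_\rho$ does not vanish at $0$ (its value there is $1/\rho(\{a\})$), so it lies outside the space being approximated. The paper repairs this with the same operator. Set $\zeta:=\xi-\tau\xi$; then $F_\zeta(t)=(1-e^{-(t-a)})F_\xi(t)$, which vanishes at $a$ and on $a+\mathcal S_\rho$. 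Moreover $\zeta\neq0$, because $\tau\xi$ lives on $[y_0+1,\infty)$ with $y_0=\min\supp|\xi|$.

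Finally, your bound $w_\rho(u)\geq c_\varepsilon u^{\lambda+\varepsilon}$ points the wrong way for the boundary value. Use $a\in\supp\rho$ instead: $\rho([a,a+\lambda/2])>0$ gives $w_\rho(u)\geq c\,u^{\lambda/2}$, hence $u^\lambda/w_\rho(u)\leq Cu^{\lambda/2}\to0$, with no separate case needed.
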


    \subsection{Applications to spherical symmetry}\label{SS: spherical_intro}

        Recall that an infinite exchangeable sequence $X$ is rotatable if for every $d \geq 1$ the tuple $\pa{X_1,\ldots,X_d}$ is invariant under the orthogonal group $O(d)$. By Freedman's theorem \cite{freedman62} (see also \cite[Theorem~1.31]{kallenberg05}), this holds if and only if
        \begin{equation*}
            X\stackrel d=\pa{\sqrt V G_i}_{i=1}^\infty,
        \end{equation*}
        where $V\geq0$ is independent of an iid standard Gaussian sequence $G=\pa{G_i}_{i=1}^\infty$; see also \cite{kelker70,kingman72,steerneman05,kallenberg05}.
        The following theorem shows that spherical symmetry of three coordinates suffices.

        \begin{theorem}\label{T: three-point-rigidity}
            Let \(X=\pa{X_i}_{i=1}^\infty\) be an infinite exchangeable
            sequence. If \(\pa{X_1,X_2,X_3}\) is spherically symmetric, then
            \(X\) is rotatable.
        \end{theorem}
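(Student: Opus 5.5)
Let $\eta$ be the directing measure of $X$ and $\Lambda=\cL(\eta)$, so that $\cL(X_1,X_2,X_3)=\cM_3(\Lambda)$ on $E=\R$. The plan has three steps. First, show that the spherically symmetric law $\cL(X_1,X_2,X_3)$ is itself the third moment measure of some Gaussian scale mixture $\fG_\nu$. Second, apply Corollary~\ref{C: exponential_rigidity}, using the Gaussian row of Table~\ref{Tab: exponential_families}, to get $\Lambda=\fG_\nu$. Third, since each $\gamma_v^\infty$ is rotatable, the sequence with directing law $\fG_\nu$ is rotatable, and this finishes the proof.

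The first step is where the real content lies, and it is the main obstacle. A spherically symmetric law in $\R^3$ need not be a Gaussian scale mixture, so exchangeable extendibility has to enter. I would use characteristic functions. Spherical symmetry gives $\EE[e^{i\langle s,(X_1,X_2,X_3)\rangle}]=\phi(|s|)$. Evaluating at $s=(s_1,s_2,s_3)$ gives
\begin{equation*}
\EE\bigl[\widehat\eta(s_1)\widehat\eta(s_2)\widehat\eta(s_3)\bigr]=\phi\bigl(\sqrt{s_1^2+s_2^2+s_3^2}\bigr).
\end{equation*}
The aim is to show that $\widehat\eta(s)$ is almost surely of the form $e^{-Vs^2/2}$.

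I would argue as follows.
\begin{itemize}
\item Setting $s_3=0$ shows that $(X_1,X_2)$ is rotation invariant as well.
\item Combine two coordinates: $\EE[\widehat\eta(s_1)\widehat\eta(s_2)\widehat\eta(s_3)]=\EE[\widehat\eta(r)\widehat\eta(s_3)]$ whenever $s_1^2+s_2^2=r^2$.
\item Then consider the variance-type functional
\begin{equation*}
\EE\Bigl|\widehat\eta(s_1)\widehat\eta(s_2)-\widehat\eta\bigl(\sqrt{s_1^2+s_2^2}\bigr)\Bigr|^2 .
\end{equation*}
\end{itemize}
Expanding this produces terms of the form $\EE[\widehat\eta(a)\widehat\eta(b)\overline{\widehat\eta(c)\widehat\eta(d)}]$, which are fourth-order quantities. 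Three coordinates do not control those directly, so this naive route needs four coordinates.

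The better plan is to avoid computing on $\eta$ and instead exhibit a candidate $\nu$. The natural candidate is $\nu=\cL(V)$ with $V=(X_1^2+X_2^2+X_3^2)/3$ or similar. Concretely, write the radial law of $(X_1,X_2,X_3)$ via its Laplace-type structure. Spherical symmetry in $\R^3$ plus the fact that $\cM_3(\Lambda)$ is a mixture of product measures should force $\phi(\sqrt{\cdot})$ to be completely monotone. I would then try to invoke the paper's later complete-monotonicity criterion for energy-radial densities, to conclude that $\phi(r)=\int e^{-vr^2/2}\,d\nu(v)$. This implies $\cM_3(\Lambda)=\cM_3(\fG_\nu)$.

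The second step applies the corollary to the centred Gaussian family, with reference measure $dx$ and $T=x^2$. The pushforward $\rho$ has support $[0,\infty)$, which contains $\N$, so \eqref{E: exponential_MS} holds. Corollary~\ref{C: exponential_rigidity} then gives that $\fG_\nu$ is $3$-rigid, and hence $\Lambda=\fG_\nu$.

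One boundary case remains: $V=0$ with positive probability. Here I would split the mixing parameter space as $Y=(0,\infty)\cup\{\infty\}$ in the rate parametrization, where the point $\infty$ corresponds to $\delta_0$. I would handle this atom separately, by conditioning on the event $\{\eta=\delta_0\}$; this event is identifiable from $\cM_1(\Lambda)(\{0\})$ together with the symmetry.
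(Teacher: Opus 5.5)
Your overall architecture matches the paper's. You identify $\cL(X_1,X_2,X_3)$ as $\cM_3$ of a Gaussian scale mixture, apply Corollary~\ref{C: exponential_rigidity} to the variance family with $T(x)=x^2$, and split off the zero-variance atom. Steps two and three are fine. But step one, which you rightly call the real content, is never carried out, and the tools you name cannot do it. Theorem~\ref{T: energy_radial_extendibility} assumes the marginal has a Lebesgue density of the form $F(\sum_j x_j^2)$ with $F$ continuous on $[0,\infty)$. A general spherically symmetric law on $\R^3$ is the law of $R\Theta$, with $\Theta$ uniform on the sphere and an arbitrary independent radius $R\ge0$, so it need not have a density. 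Even the target mixtures fail this hypothesis: they have an atom at the origin when $\Pi(\{0\})>0$, and their density profile can blow up at zero otherwise. Moreover, that theorem yields complete monotonicity of a density profile, not of $\phi(\sqrt{\cdot})$. Your candidate is also wrong: under $\int\gamma_v^{\otimes3}\,d\Pi(v)$, the variable $(X_1^2+X_2^2+X_3^2)/3$ is distributed as $V\chi^2_3/3$ with $V\sim\Pi$, not as $V$.

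The missing idea is that the candidate comes from the pair alone, via positive semidefiniteness of a kernel built from characteristic functions, as in Lemma~\ref{L: spherical_pair}.
\begin{itemize}
\item Write $\EE[\varphi_\eta(s)\varphi_\eta(t)]=F(s^2+t^2)$, with $F$ real because the pair law is invariant under $-I$.
\item Since $\varphi_\eta(-t)=\overline{\varphi_\eta(t)}$, this identity gives $\EE\abs{\varphi_\eta(t)-\varphi_\eta(-t)}^2=2F(2t^2)-F(2t^2)-F(2t^2)=0$. Hence $\varphi_\eta$ is real almost surely.
\item With $r_\eta(u):=\varphi_\eta(\sqrt u)$, one gets $\sum_{j,k}c_jc_kF(u_j+u_k)=\EE[(\sum_jc_jr_\eta(u_j))^2]\ge0$, so $(s,t)\mapsto F(s+t)$ is positive semidefinite.
\item Lemma~\ref{L: exponential_convex_representation}, together with $\abs{F}\le1$ and $F(0)=1$, then gives $F(u)=\int_{[0,\infty)}e^{-vu/2}\,d\Pi(v)$.
\end{itemize}
Substituting into $\EE[\prod_{j=1}^3\varphi_\eta(t_j)]=F(t_1^2+t_2^2+t_3^2)$ identifies the triple law as $\int\gamma_v^{\otimes3}\,d\Pi(v)$. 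The third coordinate is then used only to invoke Corollary~\ref{C: gaussian_three_rigidity}. This also explains why your variance-functional attempt stalled. It tries to prove Gaussianity of $\eta$ directly, which is the rigidity conclusion itself. Identifying the candidate needs only second-order information.

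Finally, for the atom at zero, put $U=\eta(\{0\})$. You need $\EE U^2=\cM_2(\{(0,0)\})=p_0$ as well as $\EE U=p_0$ to conclude $U\in\{0,1\}$; the first moment measure alone does not suffice.
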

        \begin{remark}
            For earlier work on rotatable and elliptically symmetric sequences, see \cite{kingman72,ars90}.
        \end{remark}

        By \(\gamma_v=\mathcal N\pa{0,v}\) we denote the law of a centered Gaussian random variable on $\R$ with variance $v \geq 0$, with \(\gamma_0=\delta_0\). The following Proposition shows that spherical symmetry of the first two dimensions does not suffice.

        \begin{proposition}\label{P: sharpness}
            Let \(\Pi\) be a probability measure on \([0,\infty)\). The
            rotatable extension $\int_0^\infty \gamma_v^{\infty} d\Pi(v)$ of
            \begin{equation*}
                \cL\pa{X_1,X_2}
                =\int_0^\infty\gamma_v^{\otimes2}\,d\Pi\pa{v}
            \end{equation*}
            is its unique infinite exchangeable extension if and only if the
            restriction of \(\Pi\) to \(\pa{0,\infty}\) is zero or is
            supported on one point. Otherwise the same spherical pair has a
            nonrotatable infinite exchangeable extension.
        \end{proposition}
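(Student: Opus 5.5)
We prove the two directions of Proposition~\ref{P: sharpness} separately, treating Gaussian scale mixtures as an exponential mixture in the Gaussian row of Table~\ref{Tab: exponential_families}: $T(x)=x^2$, $y=1/(2v)$. The atom of $\Pi$ at $v=0$ (the kernel $\delta_0$) is not of exponential form and needs separate treatment.

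\textbf{Sufficiency.} Suppose $\Pi|_{(0,\infty)}=c\,\delta_{v_0}$ with $c\in[0,1]$, so $\Pi=(1-c)\delta_0+c\,\delta_{v_0}$. Let $\Gamma$ be any directing law with $\cM_2(\Gamma)=\cM_2(\fG_\Pi)$, where $\fG_\Pi=\int_0^\infty\delta_{\gamma_v}\,d\Pi(v)$, and let $\eta$ be its directing measure. Write $m_2(\eta)=\int x^2\,d\eta$.
\begin{enumerate}
\item The second coordinate gives $\EE[\eta(\{0\})^2]=\cM_2(\{(0,0)\})=1-c$ and $\EE[\eta(\{0\})]=1-c$. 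Hence $\EE[\eta(\{0\})(1-\eta(\{0\}))]=0$, so $\eta(\{0\})\in\{0,1\}$ almost surely, and $\eta=\delta_0$ with probability exactly $1-c$.
\item On the event $\eta\neq\delta_0$, the measure $\eta$ has no atom at $0$. From $\EE[X_1^2X_2^2]=\EE[m_2(\eta)^2]$ and $\EE[X_1^2]=\EE[m_2(\eta)]$, both read off the pair law, we get $\operatorname{Var}(m_2(\eta)\mid\eta\neq\delta_0)=0$. Thus $m_2(\eta)=v_0$ almost surely on that event.
\item Still on $\eta\neq\delta_0$, the $4$-th moment identity gives $\EE\bigl[\int x^4\,d\eta\bigr]=3v_0^2$. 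Equality of fourth moments is not enough by itself, so we use more of the pair law. Conditionally on $\eta\neq\delta_0$, the pair law $\cM_2$ equals $\gamma_{v_0}^{\otimes 2}$, which is a product measure. Therefore $\EE[\eta]^{\otimes2}=\EE[\eta^{\otimes2}]$, i.e.\ $\EE\bigl[(\eta(C)-\EE\eta(C))^2\bigr]=0$ for every $C$. So $\eta$ is almost surely deterministic, and equals $\gamma_{v_0}$.
\end{enumerate}
This establishes uniqueness of the extension.

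\textbf{Necessity (non-uniqueness).} Suppose $\Pi|_{(0,\infty)}$ is not supported on one point. If it is a non-point probability after normalization and $\Pi(\{0\})=0$, Proposition~\ref{P: exponential_two_nonrigidity} applies directly, since $T=x^2$ is non-constant. The same perturbation argument should carry over when $\Pi(\{0\})>0$, by leaving the $\delta_0$ part untouched and perturbing only the mass on $(0,\infty)$. Concretely, we would produce a competitor $\Gamma\neq\fG_\Pi$ with $\cM_2(\Gamma)=\cM_2(\fG_\Pi)$. Its $\cM_2$ equals the given spherical pair law, but $\Gamma$ is not a Gaussian scale mixture.

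It remains to see that this extension is non-rotatable. By Freedman's theorem, any rotatable extension has directing law $\int\delta_{\gamma_v}\,d\Pi'(v)$. Its pair law $\int\gamma_v^{\otimes2}\,d\Pi'$ determines the law of $X_1^2+X_2^2$, which is the mixture of $v\cdot\mathrm{Exp}(1/2)$-type laws. Laplace-transform injectivity then gives $\Pi'=\Pi$. Hence the rotatable extension is unique, and $\Gamma$ is not rotatable.

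\textbf{Main obstacle.} The main obstacle is confirming that the competitor produced by Proposition~\ref{P: exponential_two_nonrigidity} consists of genuine probability measures when $\Pi$ also has an atom at $0$. We would try to restrict the construction to the conditional law on $(0,\infty)$ and then mix back in $\delta_{\delta_0}$ with weight $\Pi(\{0\})$, which preserves equality of $\cM_2$ by linearity.
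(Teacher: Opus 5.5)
Your proposal is correct and follows the paper's proof. The paper also splits off the atom at zero via $\EE U=\EE U^2=\Pi(\{0\})$ for $U=\eta(\{0\})$, applies Lemma~\ref{L: product_law_rigidity} conditionally on $\{U=0\}$ when $\Pi_+$ is a point mass, and otherwise takes $\Pi(\{0\})\delta_{\delta_0}+(1-\Pi(\{0\}))\Lambda_+$ with $\Lambda_+$ from Proposition~\ref{P: exponential_two_nonrigidity}, exactly the mixing-back you describe; the ``main obstacle'' you flag is not a real one, since a convex combination of probability measures on $\cP(\R)$ is automatically a probability measure, and the variance and fourth-moment computations in your steps 2--3 are unnecessary once the product-law argument is used.
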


        For related characterizations involving sphericity and
        exchangeability, see \cite{smith81,letac81,ars90,diaconisfreedman87}.

    \subsection{\texorpdfstring{Energy-radial laws and $\ell_q$-norm exponential laws}{Energy-radial and norm-exponential laws}}
    \label{SS: radial_intro}

        Let $E$ be Polish, let $m$ be a nonzero $\sigma$-finite Borel
        measure on $E$, and let $T:E\to[0,\infty)$ be Borel measurable.
        Put $\rho=T_*m$, and assume that
        $\operatorname{supp}\rho=[0,\infty)$ and
        \begin{equation}\label{E: critical_parameter_intro}
            Z(\beta):=\int_E e^{-\beta T(x)}\,dm(x)<\infty
            \quad\Longleftrightarrow\quad \beta>0,
            \quad \beta\in\R.
        \end{equation}
        For $\beta>0$, let $P_\beta$ have density
        \begin{equation*}
            \frac{dP_\beta}{dm}(x)=\frac{e^{-\beta T(x)}}{Z(\beta)}.
        \end{equation*}
        Let $d\geq2$, and suppose that a probability measure $Q_d$ on
        $E^d$ has $m^{\otimes d}$-density
        \begin{equation*}
            f_d(x_1,\ldots,x_d)=F\pa{\sum_{j=1}^dT(x_j)},
        \end{equation*}
        where $F:[0,\infty)\to[0,\infty)$ is continuous. Only this
        marginal is assumed to have the stated form; its exchangeable
        extensions are otherwise unrestricted.
 
        Recall that $F$ is completely monotone on $(0,\infty)$ if it is
        smooth there and $(-1)^nF^{(n)}\geq0$ for every integer $n\geq0$.
 
        \begin{theorem}\label{T: energy_radial_extendibility}
            The law $Q_d$ has an infinite exchangeable extension if and
            only if $F$ is completely monotone on $(0,\infty)$. In this
            case there is a unique finite positive measure $\mu$ on
            $(0,\infty)$ such that
            \begin{equation*}
                F(t)=\int_{(0,\infty)}e^{-\beta t}\,d\mu(\beta),
                \quad t\geq0.
            \end{equation*}
            The measure $\nu_d$ defined by
            \begin{equation}\label{E: radial_mixing_law}
                d\nu_d(\beta)=Z(\beta)^d\,d\mu(\beta)
            \end{equation}
            is a probability measure, and
            $Q_d=\int_{(0,\infty)}P_\beta^{\otimes d}\,d\nu_d(\beta)$.
            Conditionally iid sampling from $P_A$, with $A\sim\nu_d$,
            therefore gives an exchangeable extension. For $d\geq3$, this
            extension is unique. For $d=2$, it is unique if and only if
            $\nu_2$ is a point mass, equivalently, if and only if
            $F(t)=ce^{-bt}$ for some $c,b>0$.
        \end{theorem}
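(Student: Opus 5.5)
The plan is to prove the equivalence via de Finetti together with a Bernstein-type argument, and then deduce uniqueness from Corollary~\ref{C: exponential_rigidity} and Proposition~\ref{P: exponential_two_nonrigidity}.

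\emph{Sufficiency.} Suppose $F$ is completely monotone on $(0,\infty)$. Since $F$ is continuous at $0$, Bernstein's theorem gives a finite positive measure $\mu$ on $[0,\infty)$ with $F(t)=\int e^{-\beta t}d\mu(\beta)$. We first rule out an atom of $\mu$ at $0$. If $\mu(\{0\})=c>0$, then $f_d\geq c$, and $\int f_d\,dm^{\otimes d}=1$ forces $m(E)<\infty$. But then $Z(0)<\infty$, contradicting \eqref{E: critical_parameter_intro}. So $\mu$ lives on $(0,\infty)$.

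Tonelli then gives
\begin{equation*}
1=\int f_d\,dm^{\otimes d}=\int Z(\beta)^d\,d\mu(\beta),
\end{equation*}
so $\nu_d$ is a probability measure. Moreover
\begin{equation*}
f_d=\int \prod_j e^{-\beta T(x_j)}\,d\mu(\beta)=\int \prod_j \frac{dP_\beta}{dm}(x_j)\,d\nu_d(\beta),
\end{equation*}
which is the mixture representation. Uniqueness of $\mu$ follows from uniqueness of the Laplace transform.

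\emph{Necessity.} This is the main step. Let $\Lambda$ be the directing law of an infinite exchangeable extension. The $k$-marginal $Q_k$, for $k\leq d$, has density $F_k(s_1+\dots+s_k)$ (in the energies), obtained by integrating out $d-k$ coordinates:
\begin{equation*}
F_k(s)=\int F(s+t)\,d\rho^{*(d-k)}(t).
\end{equation*}
The extension is the harder direction, since the higher marginals $Q_n$, $n>d$, are not assumed radial. I would symmetrize instead. Average $\eta$ over the "energy-preserving" structure, or more concretely condition on energies: the law of $(T(X_1),T(X_2),\dots)$ is exchangeable on $[0,\infty)$, and its $d$-marginal has $\rho^{\otimes d}$-density $F(\sum t_j)$.

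Call $\tilde\eta=T_*\eta$. I would show $\tilde\eta$ is a.s. of the form $e^{-\beta t}\rho(dt)/Z(\beta)$ by an analogue of the Freedman argument. Alternatively, and more cleanly, I would avoid this and use the Hausdorff-type criterion directly. Exchangeable extendibility implies that for every $n$ the function
\begin{equation*}
t\mapsto \int F(t+s)\,(\text{signed differences})
\end{equation*}
is nonnegative. Concretely, positivity of $\EE[\eta(A)^{j}\eta(A^c)^{i}]$-type quantities, with $A=\{T\leq h\}$ and $A$ shifted, yields alternating differences of $F$ of all orders. Since only $Q_d$ is radial, the first attempt would be to work only with the mixed moments $\EE\bigl[\prod_{j\le d}\eta(g_j)\bigr]$ and de Finetti positivity for sequences of length $n\to\infty$ (Diaconis–Freedman finite de Finetti). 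This shows $Q_d$ is a limit of mixtures of product laws, and hence $F(\sum t_j)$ is a mixture of product densities $\prod g(t_j)$. A radial function of $\sum t_j$ that is a mixture of products with $d\ge2$ should force the $g$'s to be exponentials, by a Cauchy-equation argument on the support $[0,\infty)$. Then $F$ is a Laplace transform of a positive measure, hence completely monotone. I expect this step, identifying the products as exponential, to be the obstacle. I would handle it by conditioning on $\eta$ and using that the conditional law of $(T(X_1),T(X_2))$ given $T(X_1)+T(X_2)$ is uniform on the simplex, which is a rotatability-type invariance.

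\emph{Uniqueness.} With $T$ scalar, $\operatorname{supp}\rho=[0,\infty)$ contains $\N_0$, so \eqref{E: exponential_MS} holds with $\lambda_n=n$ and $t_0=0$. Corollary~\ref{C: exponential_rigidity} therefore gives $3$-rigidity of $\fG_{\nu_d}$, hence $d$-rigidity for $d\geq3$. Note that $\fG_{\nu_d}$ depends only on $\nu_d$, and the extension is unique among all directing laws.

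For $d=2$: $T$ is not a.e. constant, since its support is $[0,\infty)$. Proposition~\ref{P: exponential_two_nonrigidity} gives non-uniqueness whenever $\nu_2$ is not a point mass. Conversely, a point mass $\nu_2=\delta_b$ gives an iid law, and iid laws are $2$-rigid: equality $\EE[\eta^{\otimes2}]=P_b^{\otimes2}$ forces $\mathrm{Var}(\eta(C))=0$ for every $C$. Finally, $\nu_2$ is a point mass exactly when $\mu=c\delta_b$, that is, when $F(t)=ce^{-bt}$.
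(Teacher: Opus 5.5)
Your sufficiency argument (Bernstein's theorem, excluding an atom at $0$ via $Z(0)=m(E)=\infty$, then Tonelli) is correct and matches the paper. So is your uniqueness argument (Corollary~\ref{C: exponential_rigidity} with $t_0=0$, $\lambda_n=n$ for $d\geq3$; Lemma~\ref{L: product_law_rigidity} and Proposition~\ref{P: exponential_two_nonrigidity} for $d=2$). The gap is necessity, which you leave as a list of candidate strategies, none of which works as stated.

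\begin{itemize}
\item The alternating-difference route needs positivity of quantities such as $\EE[\eta(A)^j\eta(A^c)^i]$ with $i+j$ arbitrary. Only the $d$-marginal is prescribed, so moments of $\eta$ of order exceeding $d$ are not expressible through $F$.
\item The mixture-of-products route rests on the claim that if $\EE\bigl[\prod_{j\leq d}g_\eta(t_j)\bigr]$ depends only on $\sum_jt_j$, then the components $g_\eta$ must be exponential. For $d=2$ this is false, and the theorem you are proving says so. When $\nu_2$ is not a point mass, the law $\fG_{\nu_2}+\varepsilon\Xi$ built in Proposition~\ref{P: exponential_two_nonrigidity} has the same radial pair law. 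Yet it charges measures such as $(1+\delta)P_a-\delta P_b$ and $\frac12(P_a+P_b)$, which are not of the form $P_\beta$.
\item For the same reason, uniformity of the energies on the simplex conditionally on $\eta$ does not follow from the unconditional invariance (compare Remark~\ref{R: tail_conditioning}). So no Cauchy-equation argument can be run component by component.
\item For $d\geq3$ the components are exponential, but only a posteriori through $3$-rigidity. That presupposes the exponential-mixture representation you are trying to derive, so the argument is circular.
\end{itemize}

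The missing idea is that you never need to identify components. A second-order positivity extracted from the order-$d$ marginal suffices. Take bounded $f$ vanishing outside $\{T\leq M\}$, and set $u_n:=\1_{\{T\leq1/n\}}/m(T\leq1/n)$. This is well defined by full support of $\rho$ and the bound $m(T\leq1/n)\leq e^{1/n}Z(1)$. Then
\[
0\leq\EE\bigl[\eta(f)^2\eta(u_n)^{d-2}\bigr]
=\int_{E^d}f(x_1)f(x_2)\prod_{j=3}^du_n(x_j)\,F\Bigl(\sum_{j=1}^dT(x_j)\Bigr)\,dm^{\otimes d}(x).
\]
Uniform continuity of $F$ on compacts lets $n\to\infty$, giving $\iint f(x)f(y)F(T(x)+T(y))\,dm(x)\,dm(y)\geq0$.

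Now choose $f=\sum_jc_j\1_{\{T\in(s_j-\varepsilon,s_j+\varepsilon)\}}/\rho\bigl((s_j-\varepsilon,s_j+\varepsilon)\bigr)$ and let $\varepsilon\downarrow0$. This shows that $(s,t)\mapsto F(s+t)$ is positive semidefinite on $[0,\infty)$. Lemma~\ref{L: exponential_convex_representation} then gives $F(t)=\int_\R e^{-\beta t}\,d\mu(\beta)$ with $\mu$ finite and positive on $\R$. Finally, $1=\int Z(\beta)^d\,d\mu(\beta)$ together with $Z=\infty$ on $(-\infty,0]$ places $\mu$ on $(0,\infty)$, so $F$ is completely monotone.
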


        For $E=(0,\infty)$, $m(dx)=dx$, and $T(x)=x$, the laws $Q_d$ are $\ell_1$-norm symmetric; their connection with Williamson's $d$-transform and Kimberling's complete-monotonicity criterion for Archimedean generators valid in all dimensions is discussed in \cite{mcneilneslehova09}. Theorem~\ref{T: energy_radial_extendibility} treats general statistics $T$ satisfying the stated hypotheses and establishes the exact uniqueness thresholds among arbitrary infinite exchangeable extensions.
 
        For $q>0$ write
        \begin{equation*}
            \norm{x}_q=\pa{\sum_{j=1}^d\abs{x_j}^q}^{1/q},
        \end{equation*}
        which is the usual quasi-norm when $0<q<1$. For $\lambda>0$ and
        $d\geq1$, let $Q_{q,d,\lambda}$ be the law on $\R^d$ with density
        \begin{equation}\label{E: lq_density_intro}
            f_{q,d,\lambda}(x)
            =c_{q,d,\lambda}\exp\pa{-\lambda\norm{x}_q},
            \quad x\in\R^d,
        \end{equation}
        where $c_{q,d,\lambda}>0$ is the normalizing constant.
 
        \begin{corollary}\label{C: lq_phase_diagram}
            Let $d\geq2$ and $\lambda>0$.
            \begin{enumerate}
                \item[(a)] If $0<q<1$, then $Q_{q,d,\lambda}$ has no
                    infinite exchangeable extension.
                \item[(b)] If $q=1$, then $Q_{q,d,\lambda}$ is the law of
                    $d$ iid Laplace random variables with rate $\lambda$,
                    and its infinite exchangeable extension is unique.
                \item[(c)] If $q>1$, then $Q_{q,d,\lambda}$ has an infinite
                    exchangeable extension. It is unique if $d\geq3$ and
                    not unique if $d=2$.
            \end{enumerate}
        \end{corollary}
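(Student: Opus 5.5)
The plan is to deduce all three parts of Corollary~\ref{C: lq_phase_diagram} from Theorem~\ref{T: energy_radial_extendibility}. We take $E=\R$, $m(dx)=dx$ and $T(x)=\abs{x}^q$. Then the density \eqref{E: lq_density_intro} has the energy-radial form
\begin{equation*}
    f_{q,d,\lambda}(x)=F\pa{\sum_{j=1}^d T(x_j)},
    \qquad
    F(s)=c_{q,d,\lambda}e^{-\lambda s^{1/q}},
\end{equation*}
and $F$ is continuous on $[0,\infty)$. The hypotheses of the theorem are easy to check. First, $\rho=T_*m$ has density $\tfrac{2}{q}t^{1/q-1}$ on $(0,\infty)$, so $\supp\rho=[0,\infty)$. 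Second, $Z(\beta)=\int_\R e^{-\beta\abs{x}^q}dx$ is finite exactly when $\beta>0$, which is \eqref{E: critical_parameter_intro}. Existence of an infinite exchangeable extension is therefore equivalent to complete monotonicity of $s\mapsto e^{-\lambda s^{1/q}}$ on $(0,\infty)$.

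\emph{Case $q<1$.} Here $\alpha=1/q>1$. If $F$ were completely monotone, it would be log-convex, since Bernstein's representation plus Cauchy--Schwarz gives log-convexity of any completely monotone function. But $\log F(s)=\log c-\lambda s^{\alpha}$ is strictly concave. So no extension exists, which is part (a). One could also argue directly: $F''(s)=F(s)\bigl(\lambda^2\alpha^2 s^{2\alpha-2}-\lambda\alpha(\alpha-1)s^{\alpha-2}\bigr)$ is negative for small $s$.

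\emph{Case $q=1$.} The density factorizes as $\prod_j\tfrac{\lambda}{2}e^{-\lambda\abs{x_j}}$, so $Q_{1,d,\lambda}$ is the iid Laplace law. We have $F(s)=ce^{-\lambda s}$, so $\mu$ is a point mass at $\lambda$ and $\nu_d=\delta_\lambda$. Uniqueness then holds for $d\ge3$ by the theorem, and for $d=2$ because $\nu_2$ is a point mass. This gives part (b).

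\emph{Case $q>1$.} Now $\alpha=1/q\in(0,1)$. The function $e^{-\lambda s^\alpha}$ is completely monotone: $s\mapsto\lambda s^\alpha$ is a Bernstein function, and the exponential of minus a Bernstein function is completely monotone. Equivalently, it is the Laplace transform of a positive $\alpha$-stable law. So an extension exists. Uniqueness for $d\ge3$ follows from Theorem~\ref{T: energy_radial_extendibility}. For $d=2$, non-uniqueness requires that $\nu_2$ not be a point mass, that is, $F(s)\neq ce^{-bs}$. This holds because $\log F(s)=\log c-\lambda s^{\alpha}$ is not affine when $\alpha\neq1$. This gives part (c).

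The only step needing real input is the complete-monotonicity dichotomy: the Bernstein-function fact for $\alpha<1$ and the log-convexity obstruction for $\alpha>1$. Both are classical. Everything else is direct verification of the hypotheses of Theorem~\ref{T: energy_radial_extendibility}.
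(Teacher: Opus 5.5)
Your proposal is correct and follows the paper's proof. Both specialize Theorem~\ref{T: energy_radial_extendibility} to $T(x)=\abs{x}^q$, exclude $q<1$ by the sign of $F''$ near zero (your log-convexity argument is an equivalent alternative), and obtain complete monotonicity for $q>1$ from the positive stable Laplace transform. The one difference is at $d=2$, $q>1$: you use the theorem's criterion $F\neq ce^{-bt}$ directly, whereas the paper computes $\nu_d$ as a polynomially tilted stable law and notes it is not a point mass. Your shortcut suffices for the corollary.
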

 
    \subsection{Organization of the paper}\label{SS: organization}
 
        Section~\ref{S: three_rigidity_proof} proves
        Theorems~\ref{T: general_rigidity}
        and~\ref{T: completeness_rigidity}, and
        Section~\ref{S: exponential_proofs} proves
        Corollary~\ref{C: exponential_rigidity}, characterizes scalar
        completeness, and proves sharpness of the scalar threshold.
        Section~\ref{S: spherical_proofs} proves
        Theorem~\ref{T: three-point-rigidity},
        Proposition~\ref{P: sharpness} and
        Corollary~\ref{C: isotropic_app}.
        Section~\ref{S: radial_applications} contains the applications to
        exchangeable laws whose densities depend only on
        $\sum_j T(x_j)$. Appendix~\ref{A: finite_mixture_operators}
        relates the moment-operator method to finite-mixture bounds.

\section{Rigidity of general mixtures}\label{S: three_rigidity_proof}

    This section proves Theorems~\ref{T: general_rigidity}
    and~\ref{T: completeness_rigidity}. Throughout, $\eta$ is a random
    probability measure on $E$ defined on $(\Omega,\Sigma,\PP)$. We prove both theorems in the stronger form
    given in the remark following Theorem~\ref{T: general_rigidity}: there is a $Y$-valued random variable $A$ on $(\Omega,\Sigma,\PP)$ with law $\nu$ such that $\eta=P_A$ almost surely. Since $\fG_\nu$ is the image of $\nu$ under $y\mapsto P_y$, this gives $\cL(\eta)=\cL(P_A)=\fG_\nu$. All $L^2$ spaces in this section are real.

    For bounded Borel measurable $h:E\to\R$ we write $K_h$ for the function
    \begin{align*}
        K_h :\,\, & Y \to \R \\
        &y \mapsto P_y(h).
    \end{align*}

        For every $f\in L^2(\nu)$, the signed mixture $Q_\nu[f]$ satisfies
        \begin{equation*}
            \|Q_\nu[f]\|_{\mathrm{TV}}
            \leq\int_Y|f|\,d\nu
            \leq\|f\|_{L^2(\nu)},
        \end{equation*}
        where $\|\cdot\|_{\mathrm{TV}}$ denotes the total variation norm.
        The second moment identity will define an isometry on the
        functions $K_h$. To extend this isometry to $L^2(\nu)$, we use
        the following characterization of completeness.

        \begin{lemma}\label{L: Kh_dense}
            The following conditions are equivalent:
            \begin{enumerate}
                \item $(\nu,P)$ satisfies the completeness
                    property~\eqref{E: completeness}.
                \item The linear span of
                    \begin{equation*}
                        \{K_h:h\text{ is a bounded real-valued
                        Borel measurable map from }E\text{ to }\R\}
                    \end{equation*}
                    is dense in $L^2(\nu)$.
                \item If $(U,V)$ has joint law $\nu(du)P_u(dv)$, then
                    \begin{equation*}
                        \EE[f(U)\mid V]=0
                        \quad\Longrightarrow\quad
                        f(U)=0\quad\text{almost surely},
                        \quad f\in L^2(\nu).
                    \end{equation*}
            \end{enumerate}
        \end{lemma}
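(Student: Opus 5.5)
The plan is to route all three conditions through one duality identity. For $f\in L^2(\nu)$ and bounded Borel $h:E\to\R$, Fubini (justified because $|f|\in L^1(\nu)$ and $h$ is bounded) gives
\begin{equation*}
    \int_E h\,dQ_\nu[f]
    =\int_Y f(y)P_y(h)\,d\nu(y)
    =\ip{f}{K_h}_{L^2(\nu)} .
\end{equation*}
We also need $K_h\in L^2(\nu)$. This holds because $|K_h|\leq\sup|h|$ and $K_h$ is Borel measurable by measurability of the kernel.

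For (1)$\Leftrightarrow$(2), first note that a finite signed measure vanishes if and only if it integrates every bounded Borel $h$ to zero; indicators $h=\1_C$ already suffice. Hence $Q_\nu[f]=0$ if and only if $f\perp K_h$ for all $h$, i.e.\ $f\in(\operatorname{span}\{K_h\})^\perp$. Condition (1) therefore states that this orthogonal complement is $\{0\}$. In a Hilbert space this is equivalent to density of the span, which is (2).

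For (1)$\Leftrightarrow$(3), realize $(U,V)$ with joint law $\nu(du)P_u(dv)$. Since $f(U)\in L^2\subset L^1$, the conditional expectation $\EE[f(U)\mid V]$ vanishes almost surely if and only if $\EE[f(U)h(V)]=0$ for every bounded Borel $h$ on $E$. By the definition of the joint law and Fubini,
\begin{equation*}
    \EE[f(U)h(V)]=\int_Y f(u)P_u(h)\,d\nu(u)=\ip{f}{K_h}.
\end{equation*}
Thus $\EE[f(U)\mid V]=0$ is equivalent to $Q_\nu[f]=0$. On the other side, $f(U)=0$ almost surely is equivalent to $f=0$ $\nu$-a.e., because $U\sim\nu$. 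So (3) is a verbatim restatement of (1).

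There is no genuine obstacle here. The points requiring care are the measurability of $K_h$ and the characterization of conditional expectation by testing against bounded functions of $V$. Both are standard, and they are the only places where the Polish/Borel setting is used.
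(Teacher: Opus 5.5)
Your proposal is correct and follows the paper's proof. Both arguments rest on the Fubini identity $\int_E h\,dQ_\nu[f]=\int_Y fK_h\,d\nu=\EE[f(U)h(V)]$, which identifies the kernel of $Q_\nu$ with the orthogonal complement of the span of the $K_h$ and with the set of $f$ satisfying $\EE[f(U)\mid V]=0$. The only cosmetic difference is in (1)$\Leftrightarrow$(3): the paper writes $\EE[f(U)\mid V]$ explicitly as $\frac{dQ_\nu[f]}{dQ_\nu[\1]}(V)$, whereas you use the test-function characterization of conditional expectation directly, which is equally valid.
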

        \begin{proof}
            Suppose that $f\in L^2(\nu)$ is orthogonal to every
            $K_h$. Since $\nu$ is a probability measure,
            $f\in L^1(\nu)$. For every bounded Borel measurable map $h$,
            Fubini's theorem gives
            \begin{equation*}
                \int_E h\,dQ_\nu[f]
                =\int_Y f(y)P_y(h)\,d\nu(y)=0,
            \end{equation*}
            so $Q_\nu[f]=0$. Conversely, if $Q_\nu[f]=0$, the same
            identity shows that $f$ is orthogonal to every $K_h$.
            Thus the orthogonal complement of their span is the kernel
            of $Q_\nu$, which proves the equivalence of the first two
            conditions.

            To compare with the third condition, put $q:=Q_\nu[\mathbf1]$,
            the law of $V$. Then $Q_\nu[f]\ll q$: if $q(C)=0$, positivity
            gives $P_y(C)=0$ for $\nu$-almost every $y$, and hence
            $Q_\nu[f](C)=0$. Let $r_f:=dQ_\nu[f]/dq$. For every bounded
            Borel measurable $h:E\to\R$,
            \begin{equation*}
                \EE[f(U)h(V)]
                =\int_E h\,dQ_\nu[f]
                =\EE[r_f(V)h(V)].
            \end{equation*}
            Consequently,
            \begin{equation*}
                \frac{dQ_\nu[f]}{dQ_\nu[\mathbf1]}(V)
                =\EE[f(U)\mid V]
                \quad\text{almost surely}.
            \end{equation*}
            Therefore $Q_\nu[f]=0$ if and only if
            $\EE[f(U)\mid V]=0$ almost surely, proving the remaining
            equivalence.
        \end{proof}

    Since $h\mapsto K_h$ is linear, these functions form a linear subspace
    of $L^2(\nu)$. By that lemma, this subspace is dense
    under~\eqref{E: completeness}. For a bounded random variable $u$ and a
    bounded Borel function $v:Y\to\R$, we write $M^\PP_u$ and $M^\nu_v$
    for multiplication by $u$ on $L^2(\PP)$ and by $v$ on $L^2(\nu)$.

    The construction of $A$ is modeled on the case in which $A$ is already
    given. Let $A$ be a $Y$-valued random variable with law $\nu$. Because
    $A\sim\nu$, evaluation $g\mapsto g(A)$ is a linear isometry from
    $L^2(\nu)$ into $L^2(\PP)$. It fixes $\1$ and sends the indicator of
    each Borel set $B\subseteq Y$ to the indicator $\1_{\{A\in B\}}$. It
    also sends $K_h$ to $P_A(h)$, which equals $\eta(h)$ when $\eta=P_A$.
    Our proofs run this observation backwards. Starting from a competing
    $\eta$, completeness and the second moment identity yield a linear
    isometry $W:L^2(\nu)\to L^2(\PP)$ with $W\1=\1$ and $WK_h=\eta(h)$ for
    every bounded Borel measurable $h$. It remains to show that $W$ is
    evaluation at a random parameter with law $\nu$. Evaluation maps are
    exactly the linear isometries fixing $\1$ that send indicators to
    indicators, and it suffices to check the latter on a generating
    $\pi$-system. Everything therefore reduces to showing that $W\1_B$ is
    an indicator for every $B$ in some $\pi$-system generating $\cB(Y)$.

    In Section~\ref{SS: isometry}, Lemma~\ref{L: moment_isometry}
    translates the moment identities into properties of $W$. Under
    completeness, two observations give the isometry $W$ and three give a
    compression identity for $W^*M^\PP_{\eta(h)}W$. Four give exact
    intertwining of $W$ with multiplication by $K_h$, which is the
    identity satisfied by evaluation maps. If $W$ were onto, compression
    would already imply intertwining. This is the situation of
    Remark~\ref{R: finite_surjectivity}, where the target has $m$ linearly
    independent components and the competitor has at most $m$ atoms.
    Without such a bound, the range of $W$ can be a proper closed subspace
    of $L^2(\PP)$. Remark~\ref{R: completeness_sharpness} shows that under
    completeness alone three observations then do not suffice.

    Section~\ref{SS: point_realization} proves the reduction to
    indicators. There, Lemma~\ref{L: point_realization} constructs $A$
    from the indicators $W\1_B$ and shows that $\eta=P_A$ almost surely.
    The two rigidity proofs verify the indicator property using the
    identities available at each order. In
    Section~\ref{SS: proof_general_rigidity},
    Lemma~\ref{L: moment_isometry} shows the following for every event $D$
    on which $W\1_B$ is bounded: the measure
    $\EE[\1_D(W\1_B)^2\eta(\cdot)]$ is a signed mixture dominated by
    $Q_\nu[\1_B]$. Weak localization turns this domination into
    $W\1_B\in\{0,1\}$ almost surely for $B$ in the $\pi$-system
    of~\eqref{E: weak_localization}, proving
    Theorem~\ref{T: general_rigidity}. In
    Section~\ref{SS: proof_completeness_rigidity}, exact intertwining shows
    directly that $W\1_B$ is idempotent for every Borel set $B$. Thus
    completeness suffices at order four, proving
    Theorem~\ref{T: completeness_rigidity}.
    Subsection~\ref{SS: uniform_kernel} verifies localization for a
    simple uniform kernel.

    \subsection{What the moment identities say}\label{SS: isometry}

        By the definition of $\cM_k$, for a directing law $\Lambda$ and
        bounded real-valued Borel maps $h_1,\ldots,h_k$ we have
        \begin{equation*}
            \cM_k\pa{\Lambda}\pa{h_1\otimes\cdots\otimes h_k}
            =\int_{\cP(E)}\prod_{j=1}^k\mu(h_j)\,d\Lambda(\mu),
        \end{equation*}
        where $(h_1\otimes\cdots\otimes h_k)(x_1,\ldots,x_k)
        :=\prod_{j=1}^k h_j(x_j)$. If $\Lambda=\cL(\eta)$, the
        right-hand side is $\EE[\prod_{j=1}^k\eta(h_j)]$. Equality of
        $k$th moment measures is therefore equivalent to equality of
        these mixed moments for every such choice of $h_1,\ldots,h_k$.

        Lower moment measures are obtained by marginalization.
        For $1\leq j\leq k$,
        let $\pi_{k,j}:E^k\to E^j$ be projection onto the first $j$
        coordinates. Fubini's theorem gives
        \begin{equation*}
            (\pi_{k,j})_*\cM_k(\Lambda)=\cM_j(\Lambda).
        \end{equation*}
        Thus equality of $k$th moment measures implies equality of all
        lower moment measures.

        For $k\geq1$ consider the $k$th moment identity
        \begin{equation}\label{E: k_moment_identity}
            \EE[\eta^{\otimes k}]=\int_Y P_y^{\otimes k}\,d\nu(y).
        \end{equation}
        For $k=3$ this is Equation~\eqref{E: three_rigidity}. The
        identity of order $k$ implies those of lower order by
        marginalization.

                \begin{lemma}\label{L: moment_isometry}
            Suppose that $(\nu,P)$ satisfies~\eqref{E: completeness}, and
            let $h$ range over bounded Borel measurable maps $E\to\R$.
            \begin{enumerate}
                \item[(a)] If~\eqref{E: k_moment_identity} holds for
                $k=2$, there is a unique isometric embedding
                $W:L^2(\nu)\to L^2(\PP)$ such that
                \begin{equation}\label{E: WKh}
                    WK_h=\eta(h)
                \end{equation}
                for every $h$. Moreover, $W\1=\1$, and the adjoint
                $W^*:L^2(\PP)\to L^2(\nu)$ satisfies $W^*W=I$ and is
                characterized by
                \begin{equation}\label{E: adjoint_characterization}
                    \ip{W^*u}{K_h}_{L^2(\nu)}=\EE[u\,\eta(h)],
                    \quad u\in L^2(\PP),
                \end{equation}
                for every $h$.
                \item[(b)] If~\eqref{E: k_moment_identity} holds for
                $k=3$, then
                \begin{equation}\label{E: compressed_multiplication}
                    W^*M^\PP_{\eta(h)}W=M^\nu_{K_h}.
                \end{equation}
                \item[(c)] If~\eqref{E: k_moment_identity} holds for
                $k=4$, then
                \begin{equation}\label{E: exact_intertwining}
                    M^\PP_{\eta(h)}W=WM^\nu_{K_h}.
                \end{equation}
            \end{enumerate}
        \end{lemma}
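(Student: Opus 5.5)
The plan is to check every identity on the dense family $\{K_h\}$ from Lemma~\ref{L: Kh_dense}, since these are the functions whose images under $W$ are prescribed. Each moment identity of order $k$ then becomes a statement about inner products of products of at most $k$ factors $\eta(h_j)$ or $K_{h_j}$. Boundedness of all operators involved lets us pass from the dense span to all of $L^2(\nu)$.

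For (a), define $W$ on the span of $\{K_h\}$ by $W\sum_j c_jK_{h_j}:=\sum_j c_j\eta(h_j)$. The $k=2$ identity applied to $h\otimes g$ gives
\begin{equation*}
\EE[\eta(h)\eta(g)]=\int_Y P_y(h)P_y(g)\,d\nu(y)=\ip{K_h}{K_g}_{L^2(\nu)}.
\end{equation*}
By bilinearity, $\norm{\sum_j c_j\eta(h_j)}_{L^2(\PP)}=\norm{\sum_j c_jK_{h_j}}_{L^2(\nu)}$. Hence $W$ is well defined, since an element of the span that is zero in $L^2(\nu)$ is sent to a zero element. It is also linear and isometric. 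By density (Lemma~\ref{L: Kh_dense}, using \eqref{E: completeness}), $W$ extends uniquely to an isometry on $L^2(\nu)$, and uniqueness follows from the same density. Taking $h=\1$ gives $K_\1=\1$ and $\eta(\1)=\1$, so $W\1=\1$. The relation $W^*W=I$ is the polarization of the isometry property. Identity \eqref{E: adjoint_characterization} reads $\ip{W^*u}{K_h}=\ip{u}{WK_h}=\EE[u\,\eta(h)]$, and it characterizes $W^*u$ because the $K_h$ are dense.

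For (b), both sides of \eqref{E: compressed_multiplication} are bounded operators, since $\eta(h)$ and $K_h$ are bounded by $\norm{h}_\infty$. It therefore suffices to compare the bilinear forms on pairs $K_g,K_{g'}$. The left side gives $\ip{M^\PP_{\eta(h)}WK_g}{WK_{g'}}=\EE[\eta(h)\eta(g)\eta(g')]$. The right side gives $\int_Y P_y(h)P_y(g)P_y(g')\,d\nu(y)$. These two quantities agree by the $k=3$ identity tested on $h\otimes g\otimes g'$. Density of the span in both slots and continuity then give the operator identity.

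For (c), I would show that $D:=M^\PP_{\eta(h)}W-WM^\nu_{K_h}$ vanishes by computing $\norm{DK_g}^2$. Expanding gives
\begin{equation*}
\norm{\eta(h)\eta(g)}^2-2\ip{\eta(h)\eta(g)}{W(K_hK_g)}+\norm{K_hK_g}^2.
\end{equation*}
The first term is $\EE[\eta(h)^2\eta(g)^2]$, which equals $\int_Y K_h^2K_g^2\,d\nu$ by the $k=4$ identity. The last term equals the same integral directly. For the cross term, write $\ip{\eta(h)\eta(g)}{W(K_hK_g)}=\ip{W^*M^\PP_{\eta(h)}WK_g}{K_hK_g}$. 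By part (b), which applies because the order-4 identity implies the order-3 one by marginalization, this equals $\ip{K_hK_g}{K_hK_g}$. So the cross term is also $\int_Y K_h^2K_g^2\,d\nu$. The three terms then combine to $1-2+1=0$ times this integral, so $DK_g=0$ for every $g$, and $D=0$ by boundedness and density.

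The main subtlety is this cross term. It only works because $W^*$ can be moved across to produce the compressed operator, to which (b) applies. That step needs $W(K_hK_g)$ to be expressed through $W$ and not through $\eta$, since $K_hK_g$ need not itself be of the form $K_{h'}$.
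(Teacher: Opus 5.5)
Your proposal is correct and follows essentially the same route as the paper: the isometry built from the second moment on the dense family $\{K_h\}$, the bilinear-form comparison for (b), and the three-term expansion with the $1-2+1$ cancellation for (c), using (b) for the cross term. The only difference is cosmetic. The paper first upgrades the fourth-moment identity to the operator identity $W^*(M^\PP_{\eta(h)})^2W=(M^\nu_{K_h})^2$ and then computes $\norm{M^\PP_{\eta(h)}Wg-WM^\nu_{K_h}g}^2$ for arbitrary $g\in L^2(\nu)$, whereas you apply the fourth moment directly to $h\otimes h\otimes g\otimes g$, show the defect vanishes on each $K_g$, and extend by boundedness and density.
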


        Part~(c) expresses exact intertwining with multiplication by the
        functions $K_h$, as holds for evaluation at a random
        parameter. Parts~(a) and~(b) are what remains visible from two or
        three observations.

        \begin{proof}
            (a) Integrating $h\otimes h'$ against both sides of the second
            moment identity gives
            \begin{equation*}
                \EE[\eta(h)\eta(h')]
                =\int_Y K_hK_{h'}\,d\nu
                =\ip{K_h}{K_{h'}}_{L^2(\nu)}.
            \end{equation*}
            Applying this to $h-h'$ and using linearity,
            \begin{equation*}
                \EE\bra{\pa{\eta(h)-\eta(h')}^2}
                =\norm{K_h-K_{h'}}^2_{L^2(\nu)}.
            \end{equation*}
            Thus $\eta(h)=\eta(h')$ almost surely whenever $K_h=K_{h'}$ in
            $L^2(\nu)$, so $K_h\mapsto\eta(h)$ is a well-defined linear
            isometry on the subspace of functions $K_h$. This subspace is
            dense by Lemma~\ref{L: Kh_dense}, so the map extends uniquely
            by continuity to a linear isometry
            $W:L^2(\nu)\to L^2(\PP)$. Since $K_1=\1$ and $\eta(1)=1$, we
            have $W\1=\1$.

            Being an isometry, $W$ is bounded with $\norm{W}=1$. For each
            $u\in L^2(\PP)$, the map $g\mapsto\ip{u}{Wg}_{L^2(\PP)}$ is
            therefore a bounded linear functional on $L^2(\nu)$, and the
            Riesz representation theorem gives a unique $W^*u\in L^2(\nu)$
            with
            \begin{equation*}
                \ip{W^*u}{g}_{L^2(\nu)}=\ip{u}{Wg}_{L^2(\PP)},
                \quad g\in L^2(\nu).
            \end{equation*}
            This defines the adjoint $W^*:L^2(\PP)\to L^2(\nu)$, a bounded
            linear operator with $\norm{W^*}=\norm{W}=1$. Since $W$
            preserves inner products, $W^*W=I$. By~\eqref{E: WKh} and the
            density of the functions $K_h$, $W^*u$ is the unique element of
            $L^2(\nu)$ satisfying~\eqref{E: adjoint_characterization} for
            every $h$.

            (b) The third moment identity implies the second, so (a)
            applies. Since $\eta(h)$ and
            $K_h$ are real-valued and bounded by $\norm{h}_\infty$, the multiplication operators $M^\PP_{\eta(h)}$ and
            $M^\nu_{K_h}$ are bounded and self-adjoint. Hence
            both sides of~\eqref{E: compressed_multiplication} are bounded
            operators on $L^2(\nu)$. Integrating $h'\otimes h\otimes h''$
            against both sides of the third moment identity, and using the
            definition of $W^*$ and~\eqref{E: WKh}, we obtain
            \begin{align*}
                \ip{K_{h'}}{W^*M^\PP_{\eta(h)}WK_{h''}}_{L^2(\nu)}
                &=\ip{WK_{h'}}{M^\PP_{\eta(h)}WK_{h''}}_{L^2(\PP)}
                =\EE[\eta(h')\eta(h)\eta(h'')]\\
                &=\int_YK_{h'}K_hK_{h''}\,d\nu
                =\ip{K_{h'}}{M^\nu_{K_h}K_{h''}}_{L^2(\nu)}.
            \end{align*}
            Thus the bounded bilinear forms
            $(f,g)\mapsto\ip{f}{W^*M^\PP_{\eta(h)}Wg}$ and
            $(f,g)\mapsto\ip{f}{M^\nu_{K_h}g}$ on $L^2(\nu)$ agree whenever
            both arguments lie in the dense subspace of functions $K_h$.
            By continuity they agree everywhere, which
            proves~\eqref{E: compressed_multiplication}.

            (c) The fourth moment identity implies the second and third, so
            (a) and (b) apply. Abbreviate $M:=M^\PP_{\eta(h)}$ and
            $N:=M^\nu_{K_h}$. Integrating $h'\otimes h\otimes h\otimes h''$
            against both sides of the fourth moment identity gives, as
            in~(b),
            \begin{align*}
                \ip{K_{h'}}{W^*M^2WK_{h''}}_{L^2(\nu)}
                &=\ip{WK_{h'}}{M^2WK_{h''}}_{L^2(\PP)}
                =\EE[\eta(h')\eta(h)^2\eta(h'')]\\
                &=\int_YK_{h'}K_h^2K_{h''}\,d\nu
                =\ip{K_{h'}}{N^2K_{h''}}_{L^2(\nu)},
            \end{align*}
            and the density argument of~(b) yields
            \begin{equation*}
                W^*M^2W=N^2.
            \end{equation*}
            For $g\in L^2(\nu)$, self-adjointness of $M$ and $N$, the
            definition of $W^*$, the preceding identities, and the isometry
            property of $W$ give the following calculation, with inner
            products in $L^2(\PP)$ on the first line and in $L^2(\nu)$
            thereafter:
            \begin{align*}
                \norm{MWg-WNg}^2_{L^2(\PP)}
                &=\ip{Wg}{M^2Wg}-2\ip{Wg}{MW(Ng)}+\norm{W(Ng)}^2\\
                &=\ip{g}{W^*M^2Wg}-2\ip{g}{W^*MW(Ng)}+\norm{Ng}^2\\
                &=\ip{g}{N^2g}-2\ip{g}{N^2g}+\norm{Ng}^2\\
                &=\norm{Ng}^2-2\norm{Ng}^2+\norm{Ng}^2=0.
            \end{align*}
            Since $g$ was arbitrary, $MW=WN$, which
            is~\eqref{E: exact_intertwining}.
        \end{proof}

        \begin{remark}\label{R: finite_surjectivity}
            Suppose the target has $m$ linearly independent components
            and the competing directing law has at most $m$ atoms.
            Realize the competitor canonically on its finite support.
            The isometry in Lemma~\ref{L: moment_isometry}(a) then maps
            an $m$-dimensional space into a space of dimension at most
            $m$, so it is onto. Hence $WW^*=I$, and multiplying
            \eqref{E: compressed_multiplication} on the left by $W$
            gives exact intertwining from three observations. See
            Appendix~\ref{A: finite_mixture_operators} for the
            identification argument and its block-observation extension.
        \end{remark}

    \subsection{Evaluation at a random parameter}
    \label{SS: point_realization}

        In the model, $W\1_B=\1_{\{A\in B\}}$ is an indicator. The next
        lemma shows that this property characterizes evaluation maps,
        and that it suffices to check a generating $\pi$-system.

        \begin{lemma}\label{L: point_realization}
            Let $W:L^2(\nu)\to L^2(\PP)$ be a linear isometry with
            $W\1=\1$, and suppose that $W\1_B$ is an indicator for every
            $B$ in a $\pi$-system generating $\cB(Y)$. Then there is a
            $Y$-valued random variable $A$ with law $\nu$ such that
            \begin{equation}\label{E: evaluation_isometry}
                Wg=g(A)\quad\text{in }L^2(\PP)
            \end{equation}
            for every $g\in L^2(\nu)$.
            If, in addition, $W$ is the isometry of
            Lemma~\ref{L: moment_isometry}(a), then $\eta=P_A$ almost
            surely.
        \end{lemma}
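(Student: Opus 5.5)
First I will show that $W$ sends the indicator of every Borel set to an indicator, and that it behaves like a Boolean homomorphism. Take $B$ in the given $\pi$-system $\cA$ with $W\1_B=\1_{F_B}$ for an event $F_B$. For disjoint Borel $B,C$ the isometry gives
\begin{equation*}
    \ip{W\1_B}{W\1_C}_{L^2(\PP)}=\ip{\1_B}{\1_C}_{L^2(\nu)}=0 .
\end{equation*}
Also $\norm{W\1_B}^2=\nu(B)$ and $W\1_{Y\setminus B}=\1-W\1_B$. So the class $\mathcal D$ of Borel $B$ for which $W\1_B$ is an indicator contains $Y$ and is closed under complements.

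For countable disjoint unions I need the indicators $W\1_{B_n}$ to be pairwise orthogonal and nonnegative. Then their sum is an indicator, and $L^2$-continuity of $W$ gives $W\1_{\cup B_n}=\sum_n W\1_{B_n}$. Hence $\mathcal D$ is a Dynkin system, and the $\pi$-$\lambda$ theorem gives $\mathcal D=\cB(Y)$.

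I must also make sure that intersections behave. The key identity is $W\1_{B\cap C}=W\1_B\cdot W\1_C$. To see it, write $\1_B=\1_{B\cap C}+\1_{B\setminus C}$ and similarly for $C$. Then $W\1_B$ and $W\1_C$ are sums of orthogonal indicators, and the common part is $W\1_{B\cap C}$. Orthogonality of two indicators means their events are a.s. disjoint, which gives the product formula. Taken together, $B\mapsto F_B$ is a $\sigma$-homomorphism from $\cB(Y)$ into the measure algebra of $\PP$, and it satisfies $\PP(F_B)=\nu(B)$.

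Next I will realize this homomorphism by a point map, using that $Y$ is Polish. Fix a Borel isomorphism of $Y$ onto a Borel subset of $[0,1]$; equivalently, work with a countable generating algebra. Define $A$ through the events $F_{B}$ for countably many dyadic-type sets $B$. Concretely, pull back to $[0,1]$ and set
\begin{equation*}
    A' := \sup\{q\in\mathbb Q : \omega\in F_{\{x<q\}}\}^c\text{-type expression}.
\end{equation*}
A cleaner version uses $\tilde A(\omega)=\inf\{q\in\mathbb Q:\omega\in F_{(-\infty,q]}\}$, after modifying on a null set so that the countably many relations (monotonicity in $q$ and right-continuity) hold everywhere. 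Then $\{\tilde A\le q\}=F_{(-\infty,q]}$ a.s., so $\tilde A$ has the law of the image of $\nu$. Since $\nu$ gives full mass to the image of $Y$, I can modify $\tilde A$ on a null set to get a $Y$-valued $A$. Both $B\mapsto\1_{\{A\in B\}}$ and $B\mapsto W\1_B$ are $\sigma$-additive in $L^2$ and agree on a generating $\pi$-system, so they agree on all Borel $B$. By linearity they agree on simple functions, and by density and isometry they agree on all of $L^2(\nu)$, which gives \eqref{E: evaluation_isometry}.

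Finally I will prove $\eta=P_A$. By \eqref{E: WKh} and \eqref{E: evaluation_isometry}, $\eta(h)=WK_h=K_h(A)=P_A(h)$ almost surely for each bounded Borel $h$. Apply this to a countable convergence-determining family of bounded continuous $h$. Because $E$ is Polish, such a family exists, and it determines measures. This gives $\eta=P_A$ almost surely.

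The main obstacle is the point-realization step: passing from a $\sigma$-homomorphism of measure algebras to an actual random variable, with the null-set bookkeeping. I expect to handle it with the rational-threshold construction above via a Borel embedding into $[0,1]$.
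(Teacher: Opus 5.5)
Your proposal is correct and follows essentially the same route as the paper. A Dynkin-system argument extends the indicator property to all of $\cB(Y)$, and a Borel isomorphism $\iota:Y\to F\subseteq[0,1]$ with rational thresholds produces $A$. Agreement on a generating $\pi$-system plus density then identifies $W$ with evaluation at $A$, and a countable determining family gives $\eta=P_A$.

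The differences are cosmetic. You use $\{\iota\le q\}$ with an infimum and impose right-continuity by a null-set modification, where the paper uses $\{\iota>q\}$, a supremum, and a probability comparison. You also prove the product rule $W\1_{B\cap C}=W\1_B\,W\1_C$, where the paper needs only $\EE[S_r(1-S_q)]=0$. Drop the garbled first formula for $A'$ in favor of your ``cleaner version.''
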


        \begin{proof}
            We first show that $W\1_B$ is an indicator function for any $B \in \cB(Y)$.
            Since $W$ is a linear isometry, it preserves the inner product
            \begin{equation*}
                \EE[Wf\,Wg]=\int_Y fg\,d\nu
            \end{equation*}
            for all $f, g \in L^2(\nu)$.
            Let $\cH$ be the class of Borel sets $B\subseteq Y$ for
            which $W\1_B$ has an indicator version. This class contains
            $Y$ and is closed under complements because $W\1=\1$ and
            $W\1_{Y\setminus B}=\1-W\1_B$. If
            $B_1,B_2,\ldots\in\cH$ are pairwise disjoint, choose
            indicator versions $S_j$ of $W\1_{B_j}$. For $i\neq j$,
            \begin{equation*}
                \EE[S_iS_j]=\nu(B_i\cap B_j)=0.
            \end{equation*}
            Thus, outside a single null set, at most one $S_j$ equals
            one. The partial sums $\sum_{j\leq n}S_j$ therefore
            converge boundedly to an indicator, while
            $\sum_{j\leq n}\1_{B_j}\to\1_{\bigcup_jB_j}$ in
            $L^2(\nu)$. Continuity of $W$ and bounded convergence give
            \begin{equation*}
                W\1_{\bigcup_jB_j}=\sum_{j\geq1}S_j
                \quad\text{in }L^2(\PP).
            \end{equation*}
            Hence $\cH$ is a Dynkin system. It contains the generating
            $\pi$-system in the hypothesis, so the $\pi$-$\lambda$
            theorem gives $\cH=\cB(Y)$.
        
            We now construct the random parameter $A$. Since $Y$ is Polish,
            \cite[Theorem~1.8(ii)]{KallenbergFoundations} provides a Borel
            set $F\subseteq[0,1]$ and a bijection $\iota:Y\to F$ such
            that both $\iota$ and $\iota^{-1}$ are Borel measurable.
            For $q\in\mathbb Q$, put
            \begin{equation*}
                B_q:=\{y\in Y:\iota(y)>q\},
            \end{equation*}
            and choose an indicator version $S_q$ of $W\1_{B_q}$.
            Since $W\1=\1$, the inner-product identity gives
            $\EE[S_q]=\nu(B_q)$. For rational $q<r$,
            \begin{equation*}
                \EE[S_r(1-S_q)]
                =\nu(B_r)-\nu(B_r\cap B_q)=0,
            \end{equation*}
            so $S_r\leq S_q$ almost surely. Moreover, $S_q=1$ for $q<0$
            and $S_q=0$ for $q\geq1$, almost surely. These relations
            hold simultaneously outside a common measurable null set.
            On that null set, replace $S_q$ by $\1_{\{q<0\}}$,
            so the relations hold everywhere. Define
            \begin{equation*}
                R:=\sup\{q\in\mathbb Q:S_q=1\}\in[0,1].
            \end{equation*}
            For each rational $q$, nesting gives
            \begin{equation*}
                \{R>q\}
                =\bigcup_{n\geq1}\{S_{q+1/n}=1\}
                \subseteq\{S_q=1\}.
            \end{equation*}
            In particular, $R$ is measurable. Since
            $B_{q+1/n}\uparrow B_q$, continuity from below gives
            \begin{equation*}
                \PP(R>q)
                =\lim_{n\to\infty}\EE[S_{q+1/n}]
                =\lim_{n\to\infty}\nu(B_{q+1/n})
                =\nu(B_q)
                =\EE[S_q].
            \end{equation*}
            Consequently, $S_q=\1_{\{R>q\}}$ almost surely, and $R$
            has law $\iota_*\nu$. In particular, $R\in F$ almost
            surely. Fix $y_0\in Y$ and define
            \begin{equation*}
                A:=
                \begin{cases}
                    \iota^{-1}(R),& R\in F,\\
                    y_0,& R\notin F.
                \end{cases}
            \end{equation*}
            Because $F$ is Borel and $\iota^{-1}$ is measurable,
            $A$ is measurable, and its law is $\nu$.
        
            We now identify $W$ with evaluation at $A$. For every
            rational $q$,
            \begin{equation*}
                W\1_{B_q}=S_q
                =\1_{\{R>q\}}
                =\1_{\{A\in B_q\}}
                \quad\text{almost surely}.
            \end{equation*}
            Since $\iota$ is a Borel isomorphism, the sets $B_q$ form
            a $\pi$-system containing $Y$ and generating $\cB(Y)$.
            Their indicators span a dense subspace of $L^2(\nu)$:
            the Borel sets whose indicators lie in the closed linear
            span form a Dynkin system containing this $\pi$-system,
            so the span contains all Borel indicators.
            Evaluation $g\mapsto g(A)$ is an isometry because $A$
            has law $\nu$. Its agreement with $W$ on a dense
            subspace therefore proves \eqref{E: evaluation_isometry}.
        
            If $W$ is the isometry of
            Lemma~\ref{L: moment_isometry}(a), then
            \eqref{E: WKh} and \eqref{E: evaluation_isometry} give
            \begin{equation*}
                \eta(h)=WK_h=K_h(A)=P_A(h)
            \end{equation*}
            almost surely for each bounded Borel function $h$.
            Apply this to the indicators of a countable algebra
            generating $\cB(E)$ and intersect the resulting
            probability-one events. Then $\eta$ and $P_A$ agree
            on that algebra, and hence on $\cB(E)$, almost surely.
        \end{proof}

    \subsection{Three observations under weak localization}
    \label{SS: proof_general_rigidity}

        The moment isometry of Lemma~\ref{L: moment_isometry} assumes
        completeness. For the three-observation proof, we first verify
        that weak localization supplies this hypothesis.

        \begin{lemma}\label{L: localization_completeness}
            Strong localization implies weak localization, and weak
            localization implies completeness:
            \begin{equation*}
                \eqref{E: strong_localization}
                \quad\Longrightarrow\quad
                \eqref{E: weak_localization}
                \quad\Longrightarrow\quad
                \eqref{E: completeness}.
            \end{equation*}
        \end{lemma}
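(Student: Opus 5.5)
The first implication is immediate, and I will dispose of it first. If $f\mathbf 1_{Y\setminus B}=0$ $\nu$-a.e., then $\int_{Y\setminus B}f\,d\nu=0$. Hence the same $\pi$-system $\cA$ witnessing \eqref{E: strong_localization} also witnesses \eqref{E: weak_localization}.

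For \eqref{E: weak_localization} $\Rightarrow$ \eqref{E: completeness}, I take $f\in L^2(\nu)$ with $Q_\nu[f]=0$ and aim to show $\int_C f\,d\nu=0$ for every Borel $C$, which forces $f=0$ $\nu$-a.e. The main trick is that the zero function trivially satisfies the hypothesis $0\leq Q_\nu[g]\leq Q_\nu[\mathbf 1_B]$. Linearity then lets me add small multiples of $f$ to a suitable test function. Concretely, fix $B\in\cA$ and consider $g:=\varepsilon f$, or better, functions built from $f$ supported appropriately. The cleanest choice is $g=\varepsilon f$ with $\varepsilon\in\R$: since $Q_\nu[g]=\varepsilon Q_\nu[f]=0$, we have $0\leq Q_\nu[g]\leq Q_\nu[\mathbf 1_B]$, because $Q_\nu[\mathbf 1_B]$ is a positive measure. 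Weak localization then gives $\int_{Y\setminus B}f\,d\nu=0$ for every $B\in\cA$.

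It remains to pass from complements of sets in $\cA$ to all Borel sets. Taking $B=Y$ is not automatic, since $Y$ may not belong to $\cA$. So I first obtain $\int_Y f\,d\nu=\int_E dQ_\nu[f]=0$ directly, which is the total mass of $Q_\nu[f]$. Subtracting, I get $\int_B f\,d\nu=0$ for all $B\in\cA$. I then consider the class $\mathcal D$ of Borel sets $C$ with $\int_C f\,d\nu=0$. This class contains $Y$ and is closed under proper differences. It is also closed under increasing unions by dominated convergence, since $f\in L^1(\nu)$. So $\mathcal D$ is a Dynkin system containing the $\pi$-system $\cA$, and the $\pi$-$\lambda$ theorem gives $\mathcal D=\cB(Y)$. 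Applying this with $C=\{f>0\}$ and $C=\{f<0\}$ yields $f=0$ $\nu$-a.e.

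I expect no real obstacle here. The only point needing care is adjoining $Y$ to the $\pi$-system, and the total-mass identity handles that.
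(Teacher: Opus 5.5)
Your proposal is correct and matches the paper's proof. You apply \eqref{E: weak_localization} to $f$ itself (the sandwich $0\leq Q_\nu[f]=0\leq Q_\nu[\1_B]$ holds trivially, so the scaling by $\varepsilon$ is unnecessary), then use the total mass $\int_Y f\,d\nu=Q_\nu[f](E)=0$ to pass from $Y\setminus B$ to $B$, and conclude by the $\pi$--$\lambda$ theorem on the generating $\pi$-system $\cA\cup\{Y\}$.
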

        \begin{proof}
            The first implication follows by integrating over
            $Y\setminus B$. For the second, let $\cA$ be the countable
            generating $\pi$-system in~\eqref{E: weak_localization}.
            If $Q_\nu[f]=0$, then $\int_Y f\,d\nu=0$, and
            \eqref{E: weak_localization} gives
            $\int_{Y\setminus B}f\,d\nu=0$, hence $\int_B f\,d\nu=0$,
            for every $B\in\cA$. Since $\cA\cup\{Y\}$ is a generating
            $\pi$-system, uniqueness of finite signed measures gives
            $f\,d\nu=0$. Thus $f=0$ $\nu$-almost everywhere.
        \end{proof}

        We are now prepared to prove Theorem~\ref{T: general_rigidity}.

        \begin{proof}[Proof of Theorem~\ref{T: general_rigidity}]
            Let $\eta$ be a random probability measure on $E$, defined on
            $(\Omega,\Sigma,\PP)$, satisfying~\eqref{E: three_rigidity}. We need to show that $\eta = P_A$ almost surely for $A \sim \nu$. Weak
            localization implies completeness by
            Lemma~\ref{L: localization_completeness}, so
            Lemma~\ref{L: moment_isometry}(a,b) applies and provides the
            isometry $W$. By Lemma~\ref{L: point_realization}, it suffices
            to show that $S:=W\1_B$ is an indicator for every $B$ in the
            $\pi$-system $\cA$ of~\eqref{E: weak_localization}. Fix such a
            $B$, and choose a finite measurable representative of $S=W\1_B$.

            If $\eta$ were already of the form $P_A$ for a $Y$-valued
            random variable $A$ with law $\nu$, then $W$ would be
            evaluation at $A$, by the uniqueness in
            Lemma~\ref{L: moment_isometry}(a), since evaluation at $A$ is
            an isometry sending $K_h$ to $K_h(A)=\eta(h)$. In that case
            $S=\1_{\{A\in B\}}$. For general $\eta$, we use that $S$ is an
            indicator exactly when $S^2(1-S)=0$ almost surely. It
            therefore suffices to prove that
            \begin{equation}\label{E: cubic_identity}
                \EE[\1_DS^2(1-S)]=0
            \end{equation}
            for every event $D\in\Sigma$ on which $S$ is bounded. Indeed,
            put $H:=S^2(1-S)$ and fix $N\geq1$. For
            $D=\{\abs{S}\leq N,\ H>0\}$,~\eqref{E: cubic_identity} gives
            $\EE[\1_DH]=0$ with $H>0$ on $D$, so $\PP(D)=0$. The same
            argument applies to $\{\abs{S}\leq N,\ H<0\}$. Thus $H=0$
            almost surely on $\{\abs{S}\leq N\}$. Since these events
            increase to $\Omega$, letting $N\to\infty$ gives $H=0$ almost
            surely.

            To prove~\eqref{E: cubic_identity}, fix $D\in\Sigma$ on which
            $S$ is bounded, and consider the localized measure $\mu_D$ on $E$ defined by
            \begin{equation*}
                \mu_D(C):=\EE[\1_DS^2\eta(C)]
            \end{equation*}
            for all $C \in\cB(E)$, i.e. $\eta$ averaged over $D$ with weight $S^2$. Since
            $0\leq\eta(C)\leq1$ and $S\in L^2(\PP)$, monotone convergence
            shows that $\mu_D$ is a finite positive measure, and
            $\mu_D(h)=\EE[\1_DS^2\eta(h)]$ for every bounded Borel $h$. The
            same holds for the unlocalized measure $\mu_\Omega$. Because
            $S$ is bounded on $D$, the random variable $\1_DS^2$ is bounded
            and so lies in $L^2(\PP)$. Put
            \begin{equation*}
                f_D:=W^*(\1_DS^2)\in L^2(\nu).
            \end{equation*}
            We show that, in the setwise order on measures,
            \begin{equation}\label{E: localized_sandwich}
                0\leq Q_\nu[f_D]=\mu_D\leq\mu_\Omega=Q_\nu[\1_B].
            \end{equation}
            The first equality says that $\mu_D$ is a mixture of the
            components with signed weight $f_D$. The last says that the
            unlocalized measure is the mixture of the components with
            parameters in $B$. The inequalities are immediate, and
            together these facts place $f_D$ within the scope of weak
            localization.

            For the first equality,~\eqref{E: adjoint_characterization}
            with $u=\1_DS^2$ gives, for every bounded Borel $h$,
            \begin{equation*}
                \mu_D(h)
                =\EE[\1_DS^2\eta(h)]
                =\ip{f_D}{K_h}_{L^2(\nu)}
                =\int_Yf_D(y)P_y(h)\,d\nu(y)
                =Q_\nu[f_D](h).
            \end{equation*}
            The inequalities $0\leq\mu_D\leq\mu_\Omega$ hold because
            $0\leq\1_DS^2\leq S^2$ and $\eta\geq0$. For the last equality,
            the compression
            identity~\eqref{E: compressed_multiplication} gives, for every
            bounded Borel $h$,
            \begin{align*}
                \mu_\Omega(h)
                &=\ip{W\1_B}{M^\PP_{\eta(h)}W\1_B}_{L^2(\PP)}
                =\ip{\1_B}{W^*M^\PP_{\eta(h)}W\1_B}_{L^2(\nu)}\\
                &=\ip{\1_B}{M^\nu_{K_h}\1_B}_{L^2(\nu)}
                =\int_BP_y(h)\,d\nu(y)
                =Q_\nu[\1_B](h).
            \end{align*}

            By~\eqref{E: localized_sandwich}, weak localization applies to
            $f_D$. By weak localization, the definition of $W^*$, and
            $W(\1-\1_B)=\1-S$, we obtain
            \begin{align*}
                0
                &=\int_{Y\setminus B}f_D\,d\nu\\
                &=\ip{W^*(\1_DS^2)}{\1-\1_B}_{L^2(\nu)}\\
                &=\ip{\1_DS^2}{W(\1-\1_B)}_{L^2(\PP)}\\
                &=\EE[\1_DS^2(1-S)].
            \end{align*}
            This proves~\eqref{E: cubic_identity}.

            Hence $W\1_B$ is an indicator for every $B\in\cA$.
            Lemma~\ref{L: point_realization} now gives a $Y$-valued random
            variable $A$ with law $\nu$ such that $\eta=P_A$ almost surely,
            so $\cL(\eta)=\fG_\nu$.
        \end{proof}

    \subsection{Four observations under completeness}
    \label{SS: proof_completeness_rigidity}

        We are now prepared to prove Theorem~\ref{T: completeness_rigidity}.

        \begin{proof}[Proof of Theorem~\ref{T: completeness_rigidity}]
            Let $\eta$ be a random probability measure on $E$, defined on
            $(\Omega,\Sigma,\PP)$, with
            $\EE[\eta^{\otimes4}]=\cM_4(\fG_\nu)$. We need to show that $\eta = P_A$ almost surely for $A \sim \nu$. Since $(\nu,P)$
            satisfies~\eqref{E: completeness},
            Lemma~\ref{L: moment_isometry}(a,c) supplies an isometry $W$
            satisfying the exact intertwining
            identity~\eqref{E: exact_intertwining}. By
            Lemma~\ref{L: point_realization}, it suffices to show that
            $W\1_B$ is an indicator for every Borel set $B$.
        
            Fix such a $B$ and put $S:=W\1_B$. By
            Lemma~\ref{L: Kh_dense}, we may choose bounded Borel maps
            $h_n:E\to\R$ such that $K_{h_n}\to\1_B$ in $L^2(\nu)$.
            Applying~\eqref{E: exact_intertwining} to $\1_B$ gives
            \begin{equation*}
                W(K_{h_n}\1_B)=\eta(h_n)S.
            \end{equation*}
            Since $W$ is an isometry and $\PP$ is a probability measure,
            \begin{align*}
                \norm{W(K_{h_n}\1_B)-S}_{L^1(\PP)}
                &\leq\norm{W(K_{h_n}\1_B)-W\1_B}_{L^2(\PP)}\\
                &=\norm{(K_{h_n}-\1_B)\1_B}_{L^2(\nu)}\\
                &\leq\norm{K_{h_n}-\1_B}_{L^2(\nu)}
                \longrightarrow0.
            \end{align*}
            On the other hand, $\eta(h_n)=WK_{h_n}$, so the
            Cauchy--Schwarz inequality gives
            \begin{align*}
                \norm{\eta(h_n)S-S^2}_{L^1(\PP)}
                &\leq\norm{\eta(h_n)-S}_{L^2(\PP)}
                     \norm{S}_{L^2(\PP)}\\
                &=\norm{K_{h_n}-\1_B}_{L^2(\nu)}
                     \norm{S}_{L^2(\PP)}
                \longrightarrow0.
            \end{align*}
            The intertwining identity identifies the two sequences,
            so uniqueness of the $L^1(\PP)$ limit gives $S=S^2$
            almost surely. Thus $W\1_B$ is an indicator for every
            Borel set $B$. Lemma~\ref{L: point_realization} yields
            a $Y$-valued random variable $A$ with law $\nu$ such that
            $\eta=P_A$ almost surely, and hence
            $\cL(\eta)=\fG_\nu$.
        \end{proof}

        \begin{remark}\label{R: three_versus_four}
            Weak localization applies to the localized weights
            $\1_DS^2$, where $S=W\1_B$ and $D$ is an event on which
            $S$ is bounded, without requiring them to belong to the range
            of $W$; see \eqref{E: localized_sandwich}.
        \end{remark}

        \begin{corollary}
            \label{C: translation_gaussian_rigidity}
            Let $g\in\cP(\R^p)$ have a characteristic function whose zero
            set has empty interior, and let $P_y=g(\,\cdot-y)$.
            Then $\fG_\nu$ is $4$-rigid for every $\nu\in\cP(\R^p)$.
            In particular, if the first four coordinates of an infinite
            exchangeable sequence are jointly Gaussian, then the sequence
            is a Gaussian process.
        \end{corollary}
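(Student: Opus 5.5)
By Theorem~\ref{T: completeness_rigidity}, the whole statement reduces to verifying completeness \eqref{E: completeness} for the translation kernel $P_y=g(\cdot-y)$ and an arbitrary $\nu\in\cP(\R^p)$. The Gaussian statement will then follow by specialization.

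\textbf{Completeness.} Take real $f\in L^2(\nu)\subset L^1(\nu)$ with $Q_\nu[f]=0$. Then $Q_\nu[f]$ is the convolution $g*\sigma$ of $g$ with the finite signed measure $\sigma:=f\,d\nu$. Taking Fourier transforms gives
\begin{equation*}
    \widehat{Q_\nu[f]}(\xi)=\widehat g(\xi)\,\widehat\sigma(\xi)=0
    \quad\text{for every }\xi\in\R^p.
\end{equation*}
Hence $\widehat\sigma$ vanishes on the complement of the zero set of $\widehat g$. By hypothesis this complement is dense. Since $\widehat\sigma$ is continuous, it follows that $\widehat\sigma\equiv0$. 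By uniqueness of Fourier transforms of finite signed measures, $\sigma=0$, that is, $f=0$ $\nu$-almost everywhere. Theorem~\ref{T: completeness_rigidity} then gives $4$-rigidity of $\fG_\nu$.

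\textbf{Gaussian case.} The step needing care is identifying the right kernel. A general Gaussian exchangeable process does not have the form $\int P_y\,d\nu$ with a fixed covariance for each component. I would proceed as follows.
\begin{enumerate}
    \item Suppose $(X_1,\dots,X_4)$ is jointly Gaussian (with $E=\R^p$, or $\R$ componentwise). By exchangeability its mean is $m$ and its covariance has the block form $\mathrm{Cov}(X_i,X_j)=C$ for $i\neq j$ and $\Sigma$ for $i=j$.
    \item Exchangeable extendibility of the first two coordinates forces $C\succeq0$ and $\Sigma-C\succeq0$. Indeed, $\mathrm{Cov}\bigl(n^{-1}\sum_{i\le n}X_i\bigr)\to C$, and $\mathrm{Cov}(X_1-X_2)=2(\Sigma-C)$.
    \item Take $g=\mathcal N(0,\Sigma-C)$ and $\nu=\mathcal N(m,C)$. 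Then $\int g(\cdot-y)^{\otimes4}\,d\nu(y)$ is exactly the law of $(X_1,\dots,X_4)$, since both are Gaussian with the same mean and covariance.
    \item The characteristic function of $g$ is $e^{-\xi^\top(\Sigma-C)\xi/2}$, which never vanishes, even when $\Sigma-C$ is degenerate. So the hypothesis on $g$ holds.
    \item By the first part, the directing law is $\fG_\nu$. The sequence is therefore conditionally iid $\mathcal N(A,\Sigma-C)$ with $A\sim\mathcal N(m,C)$, and all of its finite-dimensional laws are Gaussian.
\end{enumerate}

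The only real obstacle is this reduction, in particular checking positive semidefiniteness of $C$ and of $\Sigma-C$ so that the kernel and the mixing law are legitimate. The Fourier argument for completeness is routine.
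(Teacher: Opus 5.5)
Your proposal is correct and follows the paper's proof essentially step for step. The Fourier argument for completeness is the same, and your Gaussian reduction with $g=\mathcal N(0,\Sigma-C)$ and $\nu=\mathcal N(m,C)$ is the paper's scalar argument (with $b=\operatorname{Cov}(X_1,X_2)$ and $a=\operatorname{Var}(X_1)-b$) written in vector form. The only extra item in the paper is a one-line Tonelli check that $y\mapsto P_y(B)$ is Borel, so that $P$ is a measurable probability kernel.
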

        \begin{proof}
            For Borel $B\subseteq\R^p$, the map
            $y\mapsto P_y(B)=\int_{\R^p}\1_B(y+z)\,g(dz)$ is Borel by
            Tonelli's theorem, so $P$ is a measurable probability kernel.
            Fix $\nu\in\cP(\R^p)$ and a real $f\in L^2(\nu)\subseteq L^1(\nu)$,
            and put $\sigma:=f\nu$. For a finite signed measure $\tau$ on
            $\R^p$, write $\widehat\tau(t):=\int e^{i\ip{t}{x}}\,\tau(dx)$.
            By Fubini's theorem, $Q_\nu[f]=\sigma*g$, and hence
            \begin{equation*}
                \widehat{Q_\nu[f]}(t)=\widehat\sigma(t)\,\widehat g(t),
                \quad t\in\R^p.
            \end{equation*}
            If $Q_\nu[f]=0$, then the continuous function $\widehat\sigma$
            vanishes on $\{\widehat g\neq0\}$, which is dense by hypothesis,
            and therefore everywhere. Uniqueness of Fourier transforms of
            finite signed measures gives $\sigma=0$, that is, $f=0$
            $\nu$-almost everywhere. Thus \eqref{E: completeness} holds, and
            Theorem~\ref{T: completeness_rigidity} shows that $\fG_\nu$ is
            $4$-rigid.

            For the Gaussian assertion, let $X$ be an infinite exchangeable sequence with
            $(X_1,\ldots,X_4)$ jointly Gaussian, and put $\mu:=\EE X_1$,
            $b:=\operatorname{Cov}(X_1,X_2)$, and
            $a:=\operatorname{Var}(X_1)-b$. By exchangeability,
            $2a=\operatorname{Var}(X_1-X_2)\geq0$ and
            \begin{equation*}
                0\leq\operatorname{Var}\Bigl(\frac1n\sum_{j=1}^nX_j\Bigr)
                =\frac an+b
                \quad\text{for all }n\geq1,
            \end{equation*}
            so $b\geq0$. With the convention $\mathcal N(m,0):=\delta_m$,
            apply the first part with $g=\mathcal N(0,a)$, whose
            characteristic function $e^{-at^2/2}$ never vanishes, and
            $\nu=\mathcal N(\mu,b)$; let $\Gamma$ be the resulting
            $4$-rigid directing law. If $A\sim\mathcal N(\mu,b)$ is
            independent of iid standard Gaussian variables
            $G_1,G_2,\ldots$, then $Y_j:=A+\sqrt a\,G_j$ are conditionally
            iid with law $\mathcal N(A,a)$, so
            $\cL(Y_1,\ldots,Y_n)=\cM_n(\Gamma)$ for every $n$. Each such
            block is Gaussian with mean $\mu$ in every coordinate and
            covariance matrix $aI+bJ$, where $J$ is the all-ones matrix.
            The block $(X_1,\ldots,X_4)$ has the same mean and covariance,
            so the directing law $\Lambda$ of $X$ satisfies
            $\cM_4(\Lambda)=\cL(X_1,\ldots,X_4)=\cM_4(\Gamma)$. Hence
            $\Lambda=\Gamma$, and every finite block of $X$ has the
            Gaussian law $\cM_n(\Gamma)=\cL(Y_1,\ldots,Y_n)$. Thus $X$ is a
            Gaussian process.
        \end{proof}

    \subsection{A localized uniform kernel}\label{SS: uniform_kernel}

        The following example verifies localization by recovering a signed
        measure from its tails. It gives a non-exponential application
        of Theorem~\ref{T: general_rigidity}.

        \begin{example}\label{Ex: uniform_localization}
            Let $E=Y=(0,\infty)$ and let $P_y$ be the uniform
            probability measure on $(0,y)$. For every
            $\nu\in\cP(Y)$, the pair $(\nu,P)$ satisfies strong
            localization, and hence $\fG_\nu$ is $3$-rigid.
        
            To see this, take the countable generating $\pi$-system
            \begin{equation*}
                \cA=\{Y\}\cup
                \{(0,b]:b\in\mathbb Q,\ b>0\}.
            \end{equation*}
            For real $f\in L^2(\nu)$, the signed mixture $Q_\nu[f]$
            has Lebesgue density
            \begin{equation*}
                q_f(x):=\int_{(x,\infty)}\frac{f(y)}{y}\,d\nu(y),
                \quad x>0.
            \end{equation*}
            This integral is absolutely convergent for every $x>0$,
            and $q_f$ is right-continuous. If $B=(0,b]$ and
            \begin{equation*}
                0\leq Q_\nu[f]\leq Q_\nu[\mathbf1_B],
            \end{equation*}
            then $q_f(x)=0$ for Lebesgue-almost every $x>b$,
            since $Q_\nu[\mathbf1_B]$ is supported on $(0,b]$.
            Right-continuity gives $q_f(x)=0$ for every $x>b$.
            Therefore
            \begin{equation*}
                \int_{(a,c]}\frac{f(y)}{y}\,d\nu(y)
                =q_f(a)-q_f(c)=0,
                \quad b<a<c<\infty.
            \end{equation*}
            Uniqueness of finite signed measures on $(b,\infty)$
            now gives $f=0$ $\nu$-almost everywhere on $(b,\infty)$,
            proving \eqref{E: strong_localization}.
        \end{example}

\section{Exponential mixtures}\label{S: exponential_proofs}

    We answer two analytic questions about exponential kernels. The
    first is whether domination of signed mixtures recovers their
    parameter support; under \eqref{E: exponential_MS}, it does and
    gives $3$-rigidity. The second is whether a signed mixture can
    vanish without its mixing measure vanishing. In one parameter,
    this injectivity question has an exact weighted M\"untz criterion.

    Subsection~\ref{SS: laplace_proof} proves scalar support recovery
    and its product extension. Subsection~\ref{SS: proof_exponential_rigidity}
    identifies signed-mixture densities with Laplace integrals,
    controls the inverse partition function, and proves localization.
    Subsection~\ref{SS: exponential_completeness} proves the weighted
    completeness characterization and explains why the statistic
    measure's weights can matter. Subsection~\ref{SS: exponential_two_nonrigidity}
    constructs pair-preserving perturbations that show the scalar
    threshold is exact. Subsection~\ref{SS: exponential_examples}
    gives the standard families, the support-growth examples, and
    the product construction.

    \subsection{Laplace transforms along M\"untz--Sz\'asz sequences}
    \label{SS: laplace_proof}

        In this subsection, we prove the Laplace-transform results used
        to establish localization in
        Corollary~\ref{C: exponential_rigidity}. The main support question
        is when exponential decay at rate $b$ along a sequence of
        exponents forces a finite signed measure on $[0,\infty)$ to be
        supported in $[b,\infty)$. We first prove the general
        M\"untz--Sz\'asz criterion in Proposition~\ref{L: laplace_support}.
        
        Lemma~\ref{L: laplace_uniqueness} proves that Laplace integrals on products of these
        sequences uniquely determine a finite signed measure. Combining this uniqueness
        result with the scalar support estimates gives the
        coordinatewise support restrictions needed for localization
        in several parameters.

        \begin{proof}[Proof of Proposition~\ref{L: laplace_support}]
            We prove the cycle (i)$\Rightarrow$(iii)$\Rightarrow$(ii)$\Rightarrow$(i).
            The first implication uses the Arendt--Batty--Hieber--Neubrander
            support theorem for Laplace transforms sampled along a M\"untz
            sequence; the last uses the full M\"untz theorem to construct a
            measure contradicting (ii).

            Assume (i), fix $\sigma\ne0$, and set
            $\delta:=\inf_{n\geq0}(\lambda_{n+1}-\lambda_n)>0$.
            Replacing $\lambda_n$ by $\frac{\lambda_n}{\delta}$ and $\sigma$
            by its image under $y\mapsto\delta y$ multiplies both sides of
            (iii) by $\delta$ and leaves (i) unchanged. For the proof of
            this implication, we may therefore assume
            $\lambda_{n+1}-\lambda_n\geq1$ for all $n\geq0$, so $\lambda_n\geq n$.

            Since $(\lambda_n)_{n\geq1}$ has gaps at least one and divergent
            reciprocal sum, parts (i) and (ii) of
            \cite[Theorem~2.8.3, p.~103]{ArendtEtAl2011}
            together give
            \begin{equation*}
                \inf\supp\pa{|H(t)|\,dt}
                =-\limsup_{n\to\infty}\frac{1}{\lambda_n}
                    \log\abs{\int_0^\infty e^{-\lambda_n t}H(t)\,dt}
            \end{equation*}
            for every bounded measurable function $H$ on $[0,\infty)$
            that is not zero almost everywhere.
            Put $F(t):=\sigma([0,t])$, a bounded right-continuous function.
            For every $b>0$, $F=0$ almost everywhere on $[0,b)$ if and only
            if $\sigma|_{[0,b)}=0$. Indeed, right-continuity turns
            almost-everywhere vanishing into pointwise vanishing on $[0,b)$,
            and
            $\sigma(\{0\})=F(0)$ and $\sigma((s,t])=F(t)-F(s)$ determine
            the restriction of $\sigma$ to $[0,b)$. Consequently,
            \begin{equation*}
                \inf\supp\pa{|F(t)|\,dt}=\inf\supp|\sigma|.
            \end{equation*}
            Each side is the supremum of the $b\geq0$ for which the
            corresponding measure vanishes on $[0,b)$.
            Since $\sigma\ne0$, letting $b\to\infty$ shows that $F$ is not
            zero almost everywhere, as the support theorem requires.

            For $\lambda>0$, the bound
            \begin{equation*}
                \int_{[0,\infty)}\int_y^\infty
                    e^{-\lambda t}\,dt\,d|\sigma|(y)
                \leq\frac{\|\sigma\|_{\mathrm{TV}}}{\lambda}<\infty
            \end{equation*}
            justifies Fubini's theorem, which gives
            \begin{equation*}
                \int_0^\infty e^{-\lambda t}F(t)\,dt
                =\int_{[0,\infty)}\int_y^\infty
                    e^{-\lambda t}\,dt\,d\sigma(y)
                =\frac{\widehat\sigma(\lambda)}{\lambda}.
            \end{equation*}
            We obtain
            \begin{align*}
                \inf\supp|\sigma|
                    &=-\limsup_{n\to\infty}\frac{1}{\lambda_n}
                        \log\abs{\frac{\widehat\sigma(\lambda_n)}{\lambda_n}}\\
                    &=-\limsup_{n\to\infty}\frac{1}{\lambda_n}
                        \log|\widehat\sigma(\lambda_n)|.
            \end{align*}
            The first equality is the support identity applied to $F$.
            The second follows from
            \begin{equation*}
                \frac{1}{\lambda_n}
                    \log\abs{\frac{\widehat\sigma(\lambda_n)}{\lambda_n}}
                =\frac{1}{\lambda_n}\log|\widehat\sigma(\lambda_n)|
                    -\frac{\log\lambda_n}{\lambda_n}.
            \end{equation*}
            Both sides of this identity are $-\infty$ when
            $\widehat\sigma(\lambda_n)=0$. Since $\lambda_n\geq n\to\infty$
            and $\frac{\log x}{x}\to0$ as $x\to\infty$, the correction
            tends to zero and does not affect the limsup.
            This proves (i)$\Rightarrow$(iii).

            Suppose (iii) holds, and fix a finite signed measure $\sigma$
            and $b\geq0$. The condition
            $\sigma|_{[0,b)}=0$ gives
            \begin{equation*}
                |\widehat\sigma(\lambda_n)|
                \leq\|\sigma\|_{\mathrm{TV}}e^{-b\lambda_n}
                \quad\text{ for all }\quad n\geq1,
            \end{equation*}
            proving the implication $\Rightarrow$ in \eqref{E: laplace_support}.
            If $\sigma=0$, the equivalence is immediate. If $\sigma\ne0$
            and the right-hand side of \eqref{E: laplace_support} holds,
            then $|\widehat\sigma(\lambda_n)|\leq Ce^{-b\lambda_n}$ for some
            $C>0$ and all $n\geq1$. Then
            \begin{equation*}
                \frac{1}{\lambda_n}\log|\widehat\sigma(\lambda_n)|
                \leq\frac{\log C}{\lambda_n}-b.
            \end{equation*}
            The gap condition implies $\lambda_n\to\infty$, so the limsup
            of the left-hand side is at most $-b$. By (iii),
            $\inf\supp|\sigma|\geq b$, and hence $\sigma|_{[0,b)}=0$.
            This proves (iii)$\Rightarrow$(ii).

            We prove (ii)$\Rightarrow$(i) by contraposition.
            Suppose $\sum_{n\geq1}\frac{1}{\lambda_n}<\infty$.
            Since $\frac{\lambda_n}{1+\lambda_n^2}\leq\frac{1}{\lambda_n}$,
            the full M\"untz theorem \cite[Theorem~2.1]{BorweinErdelyi1996}
            shows that $\operatorname{span}\{1,u^{\lambda_n}:n\geq1\}$
            is not dense in $C([0,1])$. The Hahn--Banach and Riesz
            representation theorems therefore give a nonzero finite real
            signed measure $\tau$ on $[0,1]$ such that
            \begin{equation*}
                \tau([0,1])=0
                \quad\text{and}\quad
                \int_{[0,1]}u^{\lambda_n}\,d\tau(u)=0
                \quad\text{ for all }\quad n\geq1.
            \end{equation*}
            The restriction $\tau|_{(0,1]}$ is nonzero, since otherwise
            the total-mass identity would force $\tau=0$. Let $\sigma$ be
            its image under $u\mapsto-\log u$. This map is a bijection from
            $(0,1]$ onto $[0,\infty)$ with inverse $y\mapsto e^{-y}$, so
            $\sigma\ne0$. For all $n\geq1$,
            \begin{equation*}
                \widehat\sigma(\lambda_n)
                =\int_{(0,1]}u^{\lambda_n}\,d\tau(u)
                =\int_{[0,1]}u^{\lambda_n}\,d\tau(u)=0,
            \end{equation*}
            where the second equality uses $\lambda_n>0$, so that
            $u^{\lambda_n}$ vanishes at $u=0$.
            The lower endpoint of $\supp|\sigma|$ is finite, so choose
            $b>\inf\supp|\sigma|$.
            The right-hand side of \eqref{E: laplace_support} holds trivially,
            but $\sigma|_{[0,b)}\ne0$. Thus (ii) fails, proving
            (ii)$\Rightarrow$(i).
        \end{proof}

        In several parameters, scalar support recovery first shows that
        certain weighted coordinate projections vanish below a
        threshold. A vanishing projection can hide cancellation between
        the other coordinates. The next lemma rules out that
        cancellation when all Laplace weights on the remaining product
        grid vanish; Lemma~\ref{L: multivariate_laplace_support} uses
        it to recover the full signed measure on each excluded slab.

        \begin{lemma}\label{L: laplace_uniqueness}
            For $1\leq i\leq d$, let
            $0=\lambda_{i,0}<\lambda_{i,1}<\cdots\to\infty$ satisfy
            $\sum_{n\geq1}\lambda_{i,n}^{-1}=\infty$.
            Let $\sigma$ be a finite signed Borel measure on
            $[0,\infty)^d$. If
            \begin{equation*}
                \int_{[0,\infty)^d}
                    \exp\pa{-\sum_{i=1}^d\lambda_{i,n_i}y_i}
                    \,d\sigma(y)=0
            \end{equation*}
            for all nonnegative integers $n_1,\ldots,n_d$, then
            $\sigma=0$.
        \end{lemma}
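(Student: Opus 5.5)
Since the grid exponents do not factor jointly, I will argue one coordinate at a time by induction on $d$, using the scalar M\"untz--Sz\'asz theorem. Note that the lemma assumes only increasing exponents tending to infinity with divergent reciprocal sum, not uniform gaps. So I will use the classical full M\"untz theorem \cite[Theorem~2.1]{BorweinErdelyi1996} directly rather than Proposition~\ref{L: laplace_support}. Since $\lambda_n/(1+\lambda_n^2)\geq 1/(2\lambda_n)$ once $\lambda_n\geq1$, divergence of $\sum\lambda_{i,n}^{-1}$ implies the full M\"untz condition. Hence $\operatorname{span}\{u^{\lambda_{i,n}}:n\geq0\}$, which contains $u^0=1$, is dense in $C([0,1])$.

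\emph{Case $d=1$.} Push $\sigma$ forward under $y\mapsto e^{-y}$ to a finite signed measure $\tau$ on $(0,1]\subseteq[0,1]$. The hypothesis says $\int u^{\lambda_{1,n}}\,d\tau=0$ for all $n\geq0$. By density and continuity of $g\mapsto\int g\,d\tau$ on $C([0,1])$, we get $\int g\,d\tau=0$ for every continuous $g$. Riesz uniqueness then gives $\tau=0$, and since the map is a bijection onto $(0,1]$, also $\sigma=0$.

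\emph{Inductive step.} Write $y=(y',y_d)$ with $y'\in[0,\infty)^{d-1}$. Fix $n_1,\ldots,n_{d-1}$ and define the finite signed measure on $[0,\infty)$
\begin{equation*}
\sigma_{n'}(C):=\int_{[0,\infty)^{d-1}\times C}\exp\Bigl(-\sum_{i<d}\lambda_{i,n_i}y_i\Bigr)\,d\sigma(y).
\end{equation*}
The weight is bounded by one, so this is finite, and the hypothesis says all its Laplace samples at $\lambda_{d,n_d}$ vanish. The case $d=1$ gives $\sigma_{n'}=0$. Now fix a Borel set $C\subseteq[0,\infty)$ and define $\sigma^C(D):=\sigma(D\times C)$ on $[0,\infty)^{d-1}$. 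We have $\sigma_{n'}(C)=0$ for every $n'$, so $\sigma^C$ satisfies the hypothesis in dimension $d-1$. The induction hypothesis gives $\sigma^C=0$. Thus $\sigma(D\times C)=0$ for all Borel rectangles, and these form a generating $\pi$-system. Applying uniqueness of finite signed measures to the positive and negative parts (or via a Dynkin-class argument) gives $\sigma=0$.

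The main point requiring care is that the density argument must use the exponent $\lambda_{i,0}=0$, which supplies the constant function. It must also handle the point $u=0$: there $\tau$ has no mass, so density in all of $C([0,1])$ is used without any vanishing constraint. The remaining steps are routine.
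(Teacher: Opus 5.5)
Your proof is correct, but it takes a different route from the paper.

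The paper's argument is one-shot. It pushes $\sigma$ forward in all coordinates at once, under $y\mapsto(e^{-y_1},\ldots,e^{-y_d})$, to a measure $\tau$ on $[0,1]^d$. It then shows directly that the span of the generalized products $u_1^{\lambda_{1,n_1}}\cdots u_d^{\lambda_{d,n_d}}$ is uniformly dense in $C([0,1]^d)$. Each factor of an ordinary monomial is approximated by its scalar M\"untz span, products of uniform approximations converge uniformly, and polynomial density on $[0,1]^d$ finishes the argument. Hence $\tau$ annihilates all of $C([0,1]^d)$ and vanishes.

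You instead induct on $d$. You use only the one-variable M\"untz theorem and one-variable Riesz uniqueness, applied to the weighted projections $\sigma_{n'}$. You then recover $\sigma$ from its values on Borel rectangles by a $\pi$--$\lambda$ argument, with the slices $\sigma^C$ handled by the induction hypothesis.

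The paper's version is shorter. It also yields the stronger, reusable fact that the product M\"untz span is dense in $C([0,1]^d)$. Yours avoids multivariate approximation theory entirely, at the cost of the slicing bookkeeping. It is essentially the weighted-projection-plus-slicing scheme the paper itself uses afterwards in proving Lemma~\ref{L: multivariate_laplace_support}. Your check that $\lambda/(1+\lambda^2)\geq 1/(2\lambda)$ for $\lambda\geq1$ matches the paper's reduction to the full M\"untz condition.
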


        \begin{proof}
            Let $\tau$ be the image of $\sigma$ under
            $y\mapsto(e^{-y_1},\ldots,e^{-y_d})$, regarded as a finite
            signed measure on $[0,1]^d$ with no mass outside $(0,1]^d$.
            The hypothesis says that every generalized product moment
            of $\tau$ is zero. By the full M\"untz theorem
            \cite[Theorem~2.1]{BorweinErdelyi1996}, the
            span of $\{u^{\lambda_{i,n}}:n\geq0\}$ is uniformly dense
            in $C([0,1])$ for each $i$: because
            $\lambda_{i,n}\to\infty$, the divergence of
            $\sum_n\frac{1}{\lambda_{i,n}}$ is equivalent to the
            divergence of
            $\sum_n\frac{\lambda_{i,n}}{1+\lambda_{i,n}^2}$.
            Approximating each factor of an ordinary monomial and
            using polynomial density on $[0,1]^d$ shows that the span
            of the generalized products is uniformly dense there. Integration against $\tau$ therefore
            vanishes on every continuous function. Hence $\tau=0$,
            and therefore $\sigma=0$.
        \end{proof}

        We now combine scalar support recovery with uniqueness in the
        remaining coordinates to obtain a support restriction for a
        finite signed measure on $[0,\infty)^d$.

        \begin{lemma}\label{L: multivariate_laplace_support}
            Let $\sigma$ be a finite signed Borel measure on $[0,\infty)^d$
            and let $b\in[0,\infty)^d$. For each $1\leq i\leq d$, let
            $0=\mu_{i,0}<\mu_{i,1}<\cdots$ be uniformly separated and satisfy
            \begin{equation*}
                \sum_{n=1}^{\infty}\frac{1}{\mu_{i,n}}=\infty.
            \end{equation*}
            Write $\mu_{\boldsymbol n}:=(\mu_{1,n_1},\ldots,\mu_{d,n_d})$.
            Suppose that, for some $0\leq C<\infty$,
            \begin{equation*}
                \abs{\int_{[0,\infty)^d}
                    e^{-\ip{y}{\mu_{\boldsymbol n}}}\,d\sigma(y)}
                \leq C e^{-\ip{b}{\mu_{\boldsymbol n}}}
                \quad
                \text{ for all }
                \quad
                \boldsymbol n\in\mathbb N_0^d.
            \end{equation*}
            Then
            \begin{equation*}
                \sigma\big|_{[0,\infty)^d
                    \setminus\prod_{i=1}^d[b_i,\infty)}=0.
            \end{equation*}
        \end{lemma}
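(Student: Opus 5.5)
Fix a coordinate $i$; the plan is to show that $\sigma$ vanishes on the slab $\{y:y_i<b_i\}$. The complement of $\prod_i[b_i,\infty)$ is the union of these $d$ slabs, so this gives the lemma.

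The idea is to fix the exponents of all coordinates other than $i$ and treat the Laplace integral as a scalar problem in $y_i$. For a multi-index $\boldsymbol m=(m_j)_{j\neq i}$, define the finite signed measure $\sigma_{\boldsymbol m}$ on $[0,\infty)$ as the pushforward under $y\mapsto y_i$ of
\begin{equation*}
\exp\Bigl(-\sum_{j\neq i}\mu_{j,m_j}y_j\Bigr)\,d\sigma(y).
\end{equation*}
Its total variation is at most $\|\sigma\|_{\mathrm{TV}}$, and its Laplace transform at $\mu_{i,n}$ equals the integral in the hypothesis with $n_i=n$ and $n_j=m_j$.

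Since every $b_j\geq0$ and every $\mu_{j,m_j}\geq0$, the hypothesis gives
\begin{equation*}
|\widehat{\sigma_{\boldsymbol m}}(\mu_{i,n})|\leq Ce^{-b_i\mu_{i,n}}\, e^{-\sum_{j\ne i}b_j\mu_{j,m_j}}\leq Ce^{-b_i\mu_{i,n}}
\end{equation*}
for all $n\geq1$. The sequence $(\mu_{i,n})$ is uniformly separated with divergent reciprocal sum, so Proposition~\ref{L: laplace_support}, (i)$\Rightarrow$(ii), yields $\sigma_{\boldsymbol m}|_{[0,b_i)}=0$. Equivalently, for every Borel $A\subseteq[0,b_i)$,
\begin{equation*}
\int \mathbf 1_A(y_i)\exp\Bigl(-\sum_{j\ne i}\mu_{j,m_j}y_j\Bigr)\,d\sigma(y)=0
\quad\text{for all }\boldsymbol m.
\end{equation*}

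Next I remove the exponential weights in the other coordinates. Fix such an $A$ and let $\tau_A$ be the pushforward of $\mathbf 1_A(y_i)\,d\sigma(y)$ under the projection onto the coordinates $j\neq i$. This is a finite signed measure on $[0,\infty)^{d-1}$, and all its Laplace integrals along the product grid $\prod_{j\ne i}\{\mu_{j,m}\}$ vanish. Uniform separation gives $\mu_{j,n}\to\infty$, so Lemma~\ref{L: laplace_uniqueness} applies and gives $\tau_A=0$. When $d=1$ this step is vacuous.

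Consequently $\sigma(A\times C)=0$ for every Borel $A\subseteq[0,b_i)$ and every Borel $C$ in the remaining coordinates, with $A$ placed in the $i$th slot. Such rectangles form a $\pi$-system generating the Borel sets of the slab $\{y_i<b_i\}$. Uniqueness of finite signed measures therefore gives $\sigma|_{\{y_i<b_i\}}=0$. The step needing most care is this passage from vanishing projections to vanishing of $\sigma$ on the slab. It is resolved by applying Lemma~\ref{L: laplace_uniqueness} to the slab-restricted measures $\tau_A$ for every $A$, rather than to a single projection.
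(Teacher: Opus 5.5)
Your proposal is correct and follows the paper's proof essentially step for step. Both arguments apply Proposition~\ref{L: laplace_support} to the weighted projections onto coordinate $i$, then apply Lemma~\ref{L: laplace_uniqueness} to the slab-restricted measures $\tau_A$, and finish with a $\pi$-$\lambda$ argument on rectangles $A\times C$. That last step works because the slab $\{y_i<b_i\}$ itself belongs to the rectangle $\pi$-system, which is what the uniqueness argument for signed measures needs.
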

        
        \begin{proof}
            When $d=1$, the conclusion follows from
            Proposition~\ref{L: laplace_support}. Suppose that $d\geq2$.
            Fix a coordinate $i$. We show that $\sigma$ vanishes on
            \begin{equation*}
                H_i:=\{y\in[0,\infty)^d:y_i<b_i\}
            \end{equation*}
            by applying the scalar support lemma to weighted projections
            onto that coordinate, and then using uniqueness in the remaining
            coordinates.
        
            Fix indices $n_j\geq0$ for $j\ne i$, and write
            $\boldsymbol n_{-i}:=(n_j)_{j\ne i}$.
            For each Borel set $A\subseteq[0,\infty)$, define
            \begin{equation*}
                \sigma_{i,\boldsymbol n_{-i}}(A)
                :=\int_{[0,\infty)^d}\1_A(y_i)
                    \exp\pa{-\sum_{j\ne i}\mu_{j,n_j}y_j}
                    \,d\sigma(y).
            \end{equation*}
            This is a finite signed measure because the exponential weight
            lies in $[0,1]$. Its Laplace transform at $\mu_{i,n}$ is the
            integral in the hypothesis with $n_i=n$. Hence
            \begin{equation*}
                \abs{\widehat\sigma_{i,\boldsymbol n_{-i}}(\mu_{i,n})}
                \leq C\exp\pa{-b_i\mu_{i,n}
                              -\sum_{j\ne i}b_j\mu_{j,n_j}}
                \leq C e^{-b_i\mu_{i,n}}
                \quad
                \text{ for all }
                \quad n\geq0,
            \end{equation*}
            where the second inequality uses $b_j\geq0$ and
            $\mu_{j,n_j}\geq0$. Proposition~\ref{L: laplace_support} gives
            \begin{equation}\label{E: weighted_projection_support}
                \sigma_{i,\boldsymbol n_{-i}}|_{[0,b_i)}=0
            \end{equation}
            for every choice of the remaining indices.
        
            To recover $\sigma$ on $H_i$, fix a Borel set
            $A\subseteq[0,b_i)$ and write $y_{-i}:=(y_j)_{j\ne i}$.
            Define a finite signed measure $\tau_A$ on $[0,\infty)^{d-1}$ by
            \begin{equation*}
                \tau_A(D):=\int_{[0,\infty)^d}
                    \1_A(y_i)\1_D(y_{-i})\,d\sigma(y)
                \quad
                \text{ for all }
                \quad D\in\cB([0,\infty)^{d-1}).
            \end{equation*}
            For every choice of $n_j\geq0$, $j\ne i$, the definitions give
            \begin{align*}
                \int_{[0,\infty)^{d-1}}
                    \exp\pa{-\sum_{j\ne i}\mu_{j,n_j}u_j}\,d\tau_A(u)
                &=\int_{[0,\infty)^d}\1_A(y_i)
                    \exp\pa{-\sum_{j\ne i}\mu_{j,n_j}y_j}\,d\sigma(y)\\
                &=\sigma_{i,\boldsymbol n_{-i}}(A)=0,
            \end{align*}
            where $u=(u_j)_{j\ne i}$ and the last equality follows from
            Equation~\eqref{E: weighted_projection_support}.
            Lemma~\ref{L: laplace_uniqueness}, applied to the remaining
            sequences, therefore gives $\tau_A=0$.
        
            Thus $\sigma$ assigns zero mass to every rectangle specified by
            $y_i\in A$ and $y_{-i}\in D$, with $A\subseteq[0,b_i)$ and $D$
            Borel. These rectangles form a $\pi$-system generating
            $\cB(H_i)$ and include $H_i$ itself. Since $\sigma$ is finite,
            the $\pi$--$\lambda$ theorem gives $\sigma|_{H_i}=0$.
            Applying this to every coordinate yields
            \begin{equation*}
                |\sigma|\pa{[0,\infty)^d
                    \setminus\prod_{i=1}^d[b_i,\infty)}
                \leq\sum_{i=1}^d|\sigma|(H_i)=0,
            \end{equation*}
            which proves the claim.
        \end{proof}

    \subsection{Localization of exponential mixtures}
    \label{SS: proof_exponential_rigidity}

        We prove Corollary~\ref{C: exponential_rigidity} by turning
        measure domination into decay bounds for a tilted signed
        mixing measure. The preceding Laplace support lemmas then
        exclude parameters outside the prescribed upper rectangle.

        Fix $\nu\in\cP(Y)$ and the point $t_0\in\supp\rho$ in
        \eqref{E: exponential_MS}. Put
        \begin{equation*}
            z(y):=\frac{1}{Z(y)},
            \quad U:=t_0+(0,\infty)^d.
        \end{equation*}
        We call $U$ the open upper orthant based at $t_0$, and
        understand vector inequalities coordinatewise. Mass near $t_0$
        gives a lower bound for $Z(y)$. Consequently the tilted inverse
        partition function $z(y)e^{-\ip{y}{t}}$ is locally uniformly
        bounded for $t\in U$. This makes the signed-mixture transforms
        continuous there, even for mixing laws with unbounded parameter
        support. We use the support grid from \eqref{E: exponential_MS}
        after shifting it into $U$.

        \begin{lemma}\label{L: normalizer_bound}
            For every $c\in U$, the map
            \begin{equation*}
                y\longmapsto z(y)e^{-\langle y,c\rangle}
            \end{equation*}
            is bounded on $Y$.
        \end{lemma}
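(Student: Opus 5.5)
We need to show that $z(y)e^{-\langle y,c\rangle}=e^{-\langle y,c\rangle}/Z(y)$ is bounded on $Y$ for every $c\in U=t_0+(0,\infty)^d$. The plan is to bound $Z(y)$ from below using mass of $\rho$ near $t_0$. Since $t_0\in\supp\rho$ by \eqref{E: exponential_MS} (take all $n_i=0$, where $\lambda_{i,0}=0$), every open neighbourhood of $t_0$ has positive $\rho$-measure.

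Fix $c\in U$. Put $\varepsilon:=\min_i(c_i-t_{0,i})>0$. Consider the box
\begin{equation*}
    V:=\{t\in[0,\infty)^d:\ t_i<t_{0,i}+\varepsilon\text{ for all }i\},
\end{equation*}
which is a relatively open neighbourhood of $t_0$ in $[0,\infty)^d$. Hence $\rho(V)>0$. It may be infinite, since $\rho$ is only $\sigma$-finite, but it is positive in any case. For $t\in V$ and $y\in Y\subseteq(0,\infty)^d$, every coordinate satisfies $t_i\le t_{0,i}+\varepsilon\le c_i$, so $\langle y,t\rangle\le\langle y,c\rangle$. Therefore
\begin{equation*}
    Z(y)\ \ge\ \int_V e^{-\langle y,t\rangle}\,d\rho(t)\ \ge\ e^{-\langle y,c\rangle}\rho(V).
\end{equation*}
If $\rho(V)<\infty$, this gives $z(y)e^{-\langle y,c\rangle}\le 1/\rho(V)$ uniformly in $y$, and we are done.

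The only delicate point is the case $\rho(V)=\infty$, or more generally making the constant finite without assuming local finiteness of $\rho$. For this, use $\sigma$-finiteness to pick a measurable $V'\subseteq V$ with $0<\rho(V')<\infty$. This is possible because $V$ is a countable union of finite-measure pieces, one of which has positive measure. The same bound with $V'$ in place of $V$ gives
\begin{equation*}
    z(y)e^{-\langle y,c\rangle}\le\frac{1}{\rho(V')}\quad\text{for all }y\in Y.
\end{equation*}

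Finiteness of $Z(y)$ from \eqref{E: finite_partition_function} is not even needed here, only its positivity, which the lower bound itself supplies. I expect no real obstacle beyond this measure-theoretic care and checking that the support condition really places $t_0$ in $\supp\rho$.
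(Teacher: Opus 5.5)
Your argument is the paper's: bound $Z(y)\ge e^{-\langle y,c\rangle}\rho(V)$ using a relatively open neighbourhood $V$ of $t_0$ that lies coordinatewise below $c$. The paper uses a ball and an intermediate point $r$ with $t_0<r<c$, which changes nothing.

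The one step that does not hold as written is your treatment of the case $\rho(V)=\infty$. $\sigma$-finiteness of $m$ does not pass to $\rho=T_*m$. For instance, take Lebesgue measure on $E=\R\times[0,\infty)^d$ and push it forward by $T(s,t)=t$. This gives $\rho(A)\in\{0,\infty\}$ for every Borel $A$, with $\supp\rho=[0,\infty)^d$, so \eqref{E: exponential_MS} holds. Yet no $V'\subseteq V$ with $0<\rho(V')<\infty$ exists.

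What excludes such examples is exactly the finiteness of the partition function, contrary to your closing remark. Applying your own inequality at a single $y_*\in Y$ gives $\rho(V)\le e^{\langle y_*,c\rangle}Z(y_*)<\infty$. So the case $\rho(V)=\infty$ never occurs, and the bound $1/\rho(V)$ is already finite. This is exactly how the paper finishes. Replace your $\sigma$-finiteness detour with that one line and the proof is complete.
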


        \begin{proof}
            Fix $c\in U$ and choose $r$ with $t_0<r<c$.
            Choose
            \begin{equation*}
                0<\varepsilon<\min_{1\leq i\leq d}
                    \{r_i-(t_0)_i\},
                \quad
                V:=B(t_0,\varepsilon)\cap[0,\infty)^d,
            \end{equation*}
            where $B(t_0,\varepsilon)$ is the open Euclidean ball.
            Then $t\leq r$ for every $t\in V$, and
            $t_0\in\supp\rho$ gives $\rho(V)>0$.
            Choose $y_*\in Y$. Equation~\eqref{E: finite_partition_function}
            gives
            \begin{equation*}
                0<\rho(V)\leq e^{\langle y_*,r\rangle}Z(y_*)<\infty.
            \end{equation*}
            For every $y\in Y$,
            \begin{equation*}
                Z(y)\geq\int_V e^{-\langle y,t\rangle}\,d\rho(t)
                    \geq e^{-\langle y,r\rangle}\rho(V).
            \end{equation*}
            Consequently,
            \begin{equation*}
                z(y)e^{-\langle y,c\rangle}
                \leq\frac{e^{-\langle y,c-r\rangle}}{\rho(V)}
                \leq\frac{1}{\rho(V)},
            \end{equation*}
            which proves the claim.
        \end{proof}

        The transform below represents the density of $Q_\nu[f]$
        relative to $m$, evaluated at the statistic $T(x)$. Its
        continuity lets us replace the almost-everywhere inequalities
        between densities by pointwise inequalities on
        $\supp\rho\cap U$.

        \begin{lemma}\label{L: transform_continuity}
            For every $f\in L^2(\nu)$, the map $\mathcal L_\nu f$
            defined in \eqref{E: exponential_transform} is well-defined
            and continuous on $U$.
        \end{lemma}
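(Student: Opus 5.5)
The plan is a dominated-convergence argument built on the uniform bound of Lemma~\ref{L: normalizer_bound}. Fix $f\in L^2(\nu)$; since $\nu$ is a probability measure, $f\in L^1(\nu)$. For $t\in U$ the integrand in \eqref{E: exponential_transform} is
\begin{equation*}
    y\longmapsto f(y)\,z(y)\,e^{-\ip{y}{t}},
\end{equation*}
which is Borel in $y$ because $Z$ is Borel by Tonelli and positive by \eqref{E: finite_partition_function}. I will show this integrand is dominated by an integrable function uniformly for $t$ near any given point of $U$. Well-definedness then follows from absolute convergence, and continuity from dominated convergence.

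Fix $t\in U$ and pick $c\in U$ with $c<t$ coordinatewise. This is possible because $U=t_0+(0,\infty)^d$ is open and coordinatewise downward-closed within $t_0+(0,\infty)^d$; for instance, take $c$ to be the midpoint of $t_0$ and $t$. Let $N:=\{s\in\R^d: c<s\}$, an open neighbourhood of $t$ contained in $U$. For $s\in N$ and $y\in Y\subseteq(0,\infty)^d$ we have $\ip{y}{s}\geq\ip{y}{c}$, so
\begin{equation*}
    \abs{f(y)z(y)e^{-\ip{y}{s}}}\leq \abs{f(y)}\,z(y)e^{-\ip{y}{c}}\leq C_c\abs{f(y)},
\end{equation*}
where $C_c:=\sup_Y z(y)e^{-\ip{y}{c}}<\infty$ by Lemma~\ref{L: normalizer_bound}. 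Since $C_c\abs{f}\in L^1(\nu)$, the integral defining $(\mathcal L_\nu f)(s)$ converges absolutely for every $s\in N$. In particular $\mathcal L_\nu f$ is well defined on all of $U$.

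For continuity, take $s_k\to t$ with $s_k\in N$, which holds for all large $k$. The integrands converge pointwise in $y$, because $s\mapsto e^{-\ip{y}{s}}$ is continuous, and they are dominated by $C_c\abs{f}$. Dominated convergence gives $(\mathcal L_\nu f)(s_k)\to(\mathcal L_\nu f)(t)$. Since $U$ is metrizable, sequential continuity is enough.

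There is no real obstacle here. The only point needing care is the strict inequality $c<t$. It lets the positivity of $y$ absorb the perturbation of $t$, while Lemma~\ref{L: normalizer_bound} is applied only at the fixed point $c$ rather than uniformly in $t$.
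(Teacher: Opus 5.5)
Your proposal is correct and follows the paper's own argument: fix $t\in U$, choose $c$ with $t_0<c<t$, bound the integrand on a neighbourhood of $t$ by $C_c\abs{f}$ using Lemma~\ref{L: normalizer_bound}, and conclude by dominated convergence. Your explicit choice of $c$ as the midpoint and the neighbourhood $N$ only make that argument concrete.
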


        \begin{proof}
            Since $\nu$ is a probability measure, $f\in L^1(\nu)$.
            Fix $t\in U$ and choose $c$ with $t_0<c<t$.
            Lemma~\ref{L: normalizer_bound} gives a constant
            $C_c<\infty$ such that
            \begin{equation*}
                z(y)e^{-\langle y,c\rangle}\leq C_c,
                \quad y\in Y.
            \end{equation*}
            This proves that the integral in
            Equation~\eqref{E: exponential_transform} is absolutely
            convergent. If $t_n\to t$, then $t_n\geq c$ for all
            sufficiently large $n$, and hence
            \begin{equation*}
                \abs{f(y)z(y)e^{-\langle y,t_n\rangle}}
                \leq C_c|f(y)|.
            \end{equation*}
            Dominated convergence then yields continuity at $t$.
        \end{proof}

        The integral in~\eqref{E: exponential_transform} is also
        absolutely convergent for $\rho$-almost every $t$ outside $U$.
        Indeed, Tonelli's theorem and the definition of $Z$ give
        \begin{equation}\label{E: exponential_fubini}
            \int_{[0,\infty)^d}\int_Y
                |f(y)|z(y)e^{-\langle y,t\rangle}\,d\nu(y)\,d\rho(t)
            =\int_Y|f(y)|\,d\nu(y)<\infty.
        \end{equation}
        Fubini's theorem therefore shows that the signed mixture
        $Q_\nu[f]$ has $m$-density
        $(\mathcal L_\nu f)(T(\cdot))$.

        \begin{proof}[Proof of Corollary~\ref{C: exponential_rigidity}]
            Fix $\nu\in\cP(Y)$. We prove strong localization
            \eqref{E: strong_localization} for the countable generating
            $\pi$-system
            \begin{equation*}
                \cA:=\left\{
                    B_b:=Y\cap\prod_{i=1}^d[b_i,\infty):
                    b\in(\mathbb Q\cap[0,\infty))^d
                \right\}.
            \end{equation*}
            Fix $B=B_b\in\cA$ and a real $f\in L^2(\nu)$ satisfying
            \begin{equation*}
                0\leq Q_\nu[f]\leq Q_\nu[\1_B].
            \end{equation*}
            We must show that $f=0$ $\nu$-almost everywhere on $Y\setminus B$.
            We obtain bounds for a tilted signed measure and apply
            Lemma~\ref{L: multivariate_laplace_support} to exclude that set.
        
            By~\eqref{E: exponential_fubini}, the two signed mixtures have
            $m$-densities $(\mathcal L_\nu f)(T(\cdot))$ and
            $(\mathcal L_\nu\1_B)(T(\cdot))$. Since $\rho=T_*m$,
            their domination gives
            $0\leq\mathcal L_\nu f\leq\mathcal L_\nu\1_B$
            $\rho$-almost everywhere. The functions $\mathcal L_\nu f$ and
            $\mathcal L_\nu\1_B$ are continuous on $U$ by
            Lemma~\ref{L: transform_continuity}. Failure of either inequality
            at a point of $\supp\rho\cap U$ would therefore imply failure
            on an open neighborhood of positive $\rho$-measure. Consequently,
            \begin{equation*}
                0\leq(\mathcal L_\nu f)(t)
                    \leq(\mathcal L_\nu\1_B)(t)
                \quad
                \text{ for all }
                \quad t\in\supp\rho\cap U.
            \end{equation*}
        
            To use these inequalities at the points supplied
            by~\eqref{E: exponential_MS}, set
            \begin{align*}
                s&:=t_0+(\lambda_{1,1},\ldots,\lambda_{d,1}),\\
                \mu_{i,n}&:=\lambda_{i,n+1}-\lambda_{i,1}
                    \quad\text{ for all }\quad n\geq0,
            \end{align*}
            for each $1\leq i\leq d$.
            Each shifted sequence starts at zero and is uniformly separated.
            Its reciprocal sum diverges because
            $0<\mu_{i,n}<\lambda_{i,n+1}$ for every $n\geq1$.
            Writing $\mu_{\boldsymbol n}:=(\mu_{1,n_1},\ldots,\mu_{d,n_d})$,
            condition~\eqref{E: exponential_MS} gives
            $s+\mu_{\boldsymbol n}\in\supp\rho\cap U$ for every
            $\boldsymbol n\in\mathbb N_0^d$.
        
            Lemma~\ref{L: normalizer_bound}, applied with $c=s$, bounds
            $z(y)e^{-\ip{y}{s}}$ on $Y$. Since $f\in L^1(\nu)$, the measure
            \begin{equation*}
                d\sigma_s(y):=f(y)z(y)e^{-\ip{y}{s}}\,d\nu(y)
            \end{equation*}
            on $Y$, extended by zero outside $Y$, is finite. Moreover,
            \begin{equation*}
                C_s:=\int_B z(y)e^{-\ip{y}{s}}\,d\nu(y)<\infty.
            \end{equation*}
            At $s+\mu_{\boldsymbol n}$, the transform inequalities give
            nonnegativity and the upper bound below. Since $y\geq b$ on $B$
            and $\mu_{\boldsymbol n}\geq0$, we obtain
            \begin{align}
                \abs{\int_{[0,\infty)^d}
                    e^{-\ip{y}{\mu_{\boldsymbol n}}}\,d\sigma_s(y)}
                &=(\mathcal L_\nu f)(s+\mu_{\boldsymbol n})\notag\\
                &\leq\int_B z(y)e^{-\ip{y}{s}}
                    e^{-\ip{y}{\mu_{\boldsymbol n}}}\,d\nu(y)\notag\\
                &\leq C_s e^{-\ip{b}{\mu_{\boldsymbol n}}}
                    \label{E: exponential_transform_domination}
            \end{align}
            for every $\boldsymbol n\in\mathbb N_0^d$.
        
            Lemma~\ref{L: multivariate_laplace_support} now shows that
            $\sigma_s$ vanishes outside $\prod_{i=1}^d[b_i,\infty)$.
            Hence
            \begin{equation*}
                0=|\sigma_s|(Y\setminus B)
                =\int_{Y\setminus B}|f(y)|z(y)e^{-\ip{y}{s}}\,d\nu(y).
            \end{equation*}
            The weight $z(y)e^{-\ip{y}{s}}$ is strictly positive on $Y$,
            so $f=0$ $\nu$-almost everywhere on $Y\setminus B$.
            This proves strong localization. Strong localization implies
            weak localization, and Theorem~\ref{T: general_rigidity}
            therefore gives $3$-rigidity of $\fG_\nu$.
        \end{proof}
        
    \subsection{Completeness in one parameter}
    \label{SS: exponential_completeness}

        We prove Proposition~\ref{P: exponential_completeness}, which
        characterizes completeness simultaneously for every mixing
        law on the full parameter space $Y=(0,\infty)$. By
        Theorem~\ref{T: completeness_rigidity}, this also gives a
        sufficient condition for every mixture over the family to
        be $4$-rigid.

        If the minimum of the statistic support is an atom, the
        weight in \eqref{E: weighted_muntz_completeness} is bounded
        above and below by positive constants. \cite[Theorem~2.1]{BorweinErdelyi1996} then gives an explicit criterion.

        \begin{corollary}\label{C: exponential_completeness_atom}
            Under the assumptions of
            Proposition~\ref{P: exponential_completeness}, suppose
            additionally that $\rho(\{a\})>0$. Then completeness
            \eqref{E: completeness} holds for every
            $\nu\in\cP((0,\infty))$ if and only if
            \begin{equation*}
                \sup_{\substack{F\subseteq\mathcal S_\rho\\
                                F\text{ finite}}}
                    \sum_{\lambda\in F}
                        \frac{\lambda}{1+\lambda^2}
                =\infty.
            \end{equation*}
        \end{corollary}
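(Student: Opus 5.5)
The plan is to reduce the corollary to Proposition~\ref{P: exponential_completeness}, which we take as given, and then apply the full M\"untz theorem \cite[Theorem~2.1]{BorweinErdelyi1996}. By that proposition, completeness for every $\nu\in\cP((0,\infty))$ is equivalent to the weighted density statement \eqref{E: weighted_muntz_completeness}. So it suffices to show that, when $\rho(\{a\})>0$, \eqref{E: weighted_muntz_completeness} holds exactly when the displayed series condition holds.

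The first step is to show that $w_\rho$ is bounded above and below on $[0,1]$ by positive constants. Proposition~\ref{P: exponential_completeness} already gives that $w_\rho$ is continuous on $[0,1]$ and positive on $(0,1]$. The hypothesis gives $w_\rho(0)=\rho(\{a\})>0$. A continuous, strictly positive function on a compact interval attains a positive minimum and a finite maximum, so there are constants $0<c\leq w_\rho\leq C$ on $[0,1]$.

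Multiplication by $w_\rho$ is therefore a linear homeomorphism of $C[0,1]$ in the sup norm, with inverse multiplication by $1/w_\rho$. It preserves the closed subspace $\{g\in C[0,1]:g(0)=0\}$, because $w_\rho(0)$ is finite and positive. Consequently, the closed span of the functions $u^\lambda/w_\rho$ with $\lambda\in\mathcal S_\rho$ equals that subspace if and only if the closed span of the functions $u^\lambda$ with $\lambda\in\mathcal S_\rho$ equals it.

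The remaining step is to identify this unweighted condition with the series condition. Every $\lambda\in\mathcal S_\rho$ is positive, so the closed span of $\{u^\lambda:\lambda\in\mathcal S_\rho\}$ lies in $\{g:g(0)=0\}$. The set $\mathcal S_\rho$ may be uncountable and need not be discrete; this is presumably why the criterion is phrased as a supremum over finite subsets. I expect this bookkeeping to be the only delicate point.

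For sufficiency, suppose the supremum is infinite. Then for every $N$ there is a finite $F\subseteq\mathcal S_\rho$ with $\sum_{\lambda\in F}\lambda/(1+\lambda^2)>N$, and a countable union of such sets gives a countable $\Lambda\subseteq\mathcal S_\rho$ with $\sum_{\lambda\in\Lambda}\lambda/(1+\lambda^2)=\infty$. The full M\"untz theorem for distinct positive exponents, in the form $\sum\lambda/(1+\lambda^2)=\infty$, makes $\operatorname{span}\{1,u^\lambda:\lambda\in\Lambda\}$ dense in $C[0,1]$.

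Now take $g\in C[0,1]$ with $g(0)=0$ and approximate it within $\varepsilon$ by $p=c_0+\sum c_\lambda u^\lambda$. Evaluating at $u=0$ gives $|c_0|<\varepsilon$. Hence $p-c_0$ lies in the span of the $u^\lambda$ and approximates $g$ within $2\varepsilon$. This proves \eqref{E: weighted_muntz_completeness}.

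For necessity, suppose the supremum is finite. Then every finite, hence every countable, subfamily of $\mathcal S_\rho$ has a convergent sum. In particular the sum over all of $\mathcal S_\rho$ is finite, which forces $\mathcal S_\rho$ to be countable. Then \cite[Theorem~2.1]{BorweinErdelyi1996} says $\operatorname{span}\{1,u^\lambda:\lambda\in\mathcal S_\rho\}$ is not dense in $C[0,1]$.

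It follows that the span of the $u^\lambda$ is not dense in $\{g:g(0)=0\}$. Otherwise adding constants would make the span with $1$ dense in $C[0,1]$, since any $f\in C[0,1]$ can be written as $f(0)+(f-f(0))$. So \eqref{E: weighted_muntz_completeness} fails, and by Proposition~\ref{P: exponential_completeness} completeness fails for some $\nu$.
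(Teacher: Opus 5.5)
Your proposal is correct and follows essentially the same route as the paper. It reduces via Proposition~\ref{P: exponential_completeness} to \eqref{E: weighted_muntz_completeness}, strips the weight using two-sided positive bounds on $w_\rho$ (you get these by compactness; the paper writes $\rho(\{a\})\leq w_\rho\leq e^aZ(1)$ explicitly), passes between density in $\{g\in C[0,1]:g(0)=0\}$ and density after adjoining constants, and applies the full M\"untz theorem after reducing to a countable exponent set. The one case you skip is $\mathcal S_\rho$ finite or empty in the necessity direction: there the M\"untz theorem, stated for infinite sequences, does not literally apply, but the span is finite-dimensional and hence closed and proper, as the paper notes.
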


        The criterion allows arbitrary statistic support, including
        support with finite accumulation points. When
        $\mathcal S_\rho=\{\lambda_n:n\geq1\}$ with
        $0<\lambda_1<\lambda_2<\cdots\to\infty$, it reduces to
        \begin{equation*}
            \sum_{n = 1}^\infty\frac{1}{\lambda_n}=\infty,
        \end{equation*}
        without any uniform separation assumption.

        In general, the weight $w_\rho$ is essential:
        Example~\ref{Ex: exponential_completeness_weight} gives two
        exponential kernels with the same statistic support, one
        complete for every mixing law and the other incomplete for
        some mixing law. We prove both criteria and give the example
        below.

        \begin{proof}[Proof of
            Proposition~\ref{P: exponential_completeness}]
            We first verify the properties of the weight, then reduce
            completeness to injectivity on finite signed mixing measures.
            The substitution $u=e^{1-y}$ and a weighted translation
            relate injectivity to the weighted density condition.
            In the converse direction, the translation also supplies
            the possible missing zero at $t=a$.
        
            The integrand defining $w_\rho(u)$ is positive for $u>0$
            and bounded by $e^{-(t-a)}$, whose $\rho$-integral is
            $e^aZ(1)<\infty$. At zero it converges pointwise to
            $\mathbf1_{\{a\}}(t)$. Dominated convergence therefore
            gives continuity of $w_\rho$ on $[0,1]$, and positivity
            gives $w_\rho(u)>0$ on $(0,1]$.
        
            Set
            \begin{equation*}
                p_t(y):=\frac{e^{-ty}}{Z(y)}.
            \end{equation*}
            For $a<s<t$, we have
            $0<\rho([a,s])\leq e^sZ(1)<\infty$ and
            $Z(y)\geq e^{-sy}\rho([a,s])$. Hence
            \begin{equation*}
                0<p_t(y)\leq
                    \frac{e^{-(t-s)y}}{\rho([a,s])}
                \quad\text{ for all }\quad y>0.
            \end{equation*}
            Thus $p_t$ is bounded and tends to zero as $y\to\infty$.
            When $\rho(\{a\})>0$, we also have
            $p_a(y)\leq\frac{1}{\rho(\{a\})}$.
            The expression for $w_\rho$ gives
            \begin{equation}\label{E: exponential_completeness_substitution}
                p_{a+\lambda}(1-\log u)
                =e^{-\lambda}\frac{u^\lambda}{w_\rho(u)}
                \quad\text{ for all }\quad
                \lambda>0,\ u\in(0,1].
            \end{equation}
            Since $1-\log u\to\infty$ as $u\downarrow0$, this proves
            the asserted continuous extension of the weighted
            monomials at zero.
        
            Completeness for every mixing law is equivalent to
            injectivity of $\xi\mapsto\int P_y\,d\xi(y)$ on finite
            signed Borel measures on $(0,\infty)$. Indeed,
            injectivity applied to $d\xi=f\,d\nu$, with
            $f\in L^2(\nu)\subseteq L^1(\nu)$, gives completeness.
            Conversely, a nonzero $\xi$ in the kernel gives
            \begin{equation*}
                \nu:=\frac{|\xi|}{\norm{\xi}_{\mathrm{TV}}},
                \quad
                f:=\frac{d\xi}{d|\xi|},
            \end{equation*}
            for which $|f|=1$ $\nu$-almost everywhere and
            $Q_\nu[f]=0$, so completeness fails.
        
            For such a signed measure $\xi$, define
            \begin{equation*}
                F_\xi(t):=\int_{(0,\infty)}p_t(y)\,d\xi(y),
                \quad t>a,
            \end{equation*}
            and also at $a$ when $\rho(\{a\})>0$.
            The preceding bounds make these integrals finite.
            Since $p_t\leq p_b$ whenever $t\geq b>a$, dominated
            convergence gives continuity of $F_\xi$ on $(a,\infty)$.
            Tonelli's theorem gives
            \begin{equation*}
                \int_{E\times(0,\infty)}p_{T(x)}(y)\,
                    d(m\otimes|\xi|)(x,y)
                =\norm{\xi}_{\mathrm{TV}},
            \end{equation*}
            because each $P_y$ is a probability measure.
            Fubini's theorem therefore gives $F_\xi\circ T$ as the
            density of $\int P_y\,d\xi(y)$, with its value at $a$
            needed only when $\rho(\{a\})>0$. The signed mixture
            vanishes exactly when $F_\xi=0$ $\rho$-almost everywhere.
            Since $F_\xi$ is continuous on $(a,\infty)$, this is
            equivalent to
            \begin{equation}\label{E: exponential_completeness_kernel}
                F_\xi(t)=0
                \quad\text{ for all }\quad
                t\in\supp\rho\cap(a,\infty),
                \quad\text{ and }\quad
                F_\xi(a)=0\text{ if }\rho(\{a\})>0.
            \end{equation}
        
            We use $u=e^{1-y}$ so that
            $w_\rho(1)=e^aZ(1)$ is finite even when $\rho$ has
            infinite mass. To move arbitrary mixing measures into
            $(1,\infty)$, define
            \begin{equation*}
                \int h\,d(\tau\xi)
                :=\int h(y+1)e^a\frac{Z(y+1)}{Z(y)}\,d\xi(y)
            \end{equation*}
            for bounded Borel measurable $h$.
            Since $\rho$ is concentrated on $[a,\infty)$, the
            multiplier lies in $(0,1]$. Thus $\tau\xi$ is finite
            and concentrated on $(1,\infty)$; positivity of the
            multiplier and injectivity of translation also make
            $\tau$ injective. Direct substitution gives
            \begin{equation}\label{E: exponential_completeness_translation}
                F_{\tau\xi}(t)=e^{-(t-a)}F_\xi(t)
                \quad\text{ for all }\quad t>a,
            \end{equation}
            and also at $a$ when $\rho(\{a\})>0$.
        
            Put $C_0:=\{g\in C[0,1]:g(0)=0\}$, identified by
            restriction with $C_0((0,1])$.
            Suppose first that \eqref{E: weighted_muntz_completeness}
            holds and $\xi$ satisfies
            \eqref{E: exponential_completeness_kernel}.
            Let $\sigma$ be the pushforward of $\tau\xi$ under
            $y\mapsto e^{1-y}$, viewed as a measure on $(0,1]$.
            For every $\lambda\in\mathcal S_\rho$,
            \begin{equation*}
                \int_{(0,1]}\frac{u^\lambda}{w_\rho(u)}\,d\sigma(u)
                =e^\lambda F_{\tau\xi}(a+\lambda)
                =F_\xi(a+\lambda)=0,
            \end{equation*}
            since $a+\lambda\in\supp\rho\cap(a,\infty)$.
            Density implies that $\sigma$ annihilates $C_0$,
            so Riesz uniqueness gives $\sigma=0$.
            Hence $\tau\xi=0$, and injectivity of $\tau$ gives
            $\xi=0$.
        
            Conversely, if \eqref{E: weighted_muntz_completeness}
            fails, Hahn--Banach and Riesz representation on
            $C_0((0,1])$ give a nonzero finite signed measure
            $\sigma$ on $(0,1]$ annihilating every weighted monomial.
            Its pushforward $\xi$ under $u\mapsto1-\log u$ is
            nonzero, finite, and concentrated on $[1,\infty)$.
            By \eqref{E: exponential_completeness_substitution},
            $F_\xi(a+\lambda)=0$ for all
            $\lambda\in\mathcal S_\rho$.
            Set $\zeta:=\xi-\tau\xi$. Then
            \begin{equation*}
                F_\zeta(t)=\bigl(1-e^{-(t-a)}\bigr)F_\xi(t)
            \end{equation*}
            for $t>a$, and also at $a$ when $\rho(\{a\})>0$.
            Thus $\zeta$ satisfies
            \eqref{E: exponential_completeness_kernel}, including
            the possible condition at $a$.
            Moreover, $y_0:=\min\supp|\xi|\geq1$ exists, and
            $|\xi|([y_0,y_0+1))>0$. Since $\tau\xi$ is concentrated
            on $[y_0+1,\infty)$, $\zeta$ agrees with $\xi$ on this
            interval and is therefore nonzero. Injectivity fails.
        
            This proves the equivalence. The rigidity conclusion
            follows from Theorem~\ref{T: completeness_rigidity}.
        \end{proof}

        \begin{proof}[Proof of
            Corollary~\ref{C: exponential_completeness_atom}]
            By Proposition~\ref{P: exponential_completeness}, it
            suffices to show that
            \eqref{E: weighted_muntz_completeness} is equivalent to
            the divergence condition in
            Corollary~\ref{C: exponential_completeness_atom}. We
            remove the weight using the atom at $a$, adjoin the
            constants to reach the setting of the full M\"untz
            theorem, and then reduce arbitrary exponent sets to
            countable ones. Let $C_0$ be as in the proof of
            Proposition~\ref{P: exponential_completeness}. For
            $\mathcal S\subseteq(0,\infty)$, write
            \begin{equation*}
                \mathcal V(\mathcal S)
                :=\operatorname{span}
                    \bigl(\{\mathbf1\}\cup\{u^\lambda:\lambda\in\mathcal S\}\bigr),
                \quad
                \Delta(\mathcal S)
                :=\sup_{\substack{F\subseteq\mathcal S\\F\text{ finite}}}
                    \sum_{\lambda\in F}\frac{\lambda}{1+\lambda^2}.
            \end{equation*}
            The full M\"untz theorem concerns density of
            $\mathcal V(\mathcal S)$, and the condition in
            Corollary~\ref{C: exponential_completeness_atom} reads
            $\Delta(\mathcal S_\rho)=\infty$.
         
            We first remove the weight. For $0<u\leq1$, the integrand
            defining $w_\rho(u)$ is nonnegative, equals $1$ at $t=a$,
            and is at most $e^{-(t-a)}$. Together with
            $w_\rho(0)=\rho(\{a\})$, this gives
            \begin{equation*}
                0<\rho(\{a\})\leq w_\rho(u)\leq e^aZ(1)<\infty
                \quad\text{ for all }\quad u\in[0,1].
            \end{equation*}
            By Proposition~\ref{P: exponential_completeness},
            $w_\rho$ is continuous, so $\frac{1}{w_\rho}$ is
            continuous and bounded as well. Multiplication by
            $w_\rho$ is therefore a linear homeomorphism of $C_0$
            onto itself, with inverse given by multiplication by
            $\frac{1}{w_\rho}$. It sends each weighted monomial
            $\frac{u^\lambda}{w_\rho(u)}$ to $u^\lambda$. Hence
            \eqref{E: weighted_muntz_completeness} holds if and only
            if $\operatorname{span}\{u^\lambda:\lambda\in\mathcal S_\rho\}$
            is dense in $C_0$.
         
            We next adjoin the constants. Let $V\subseteq C_0$ be a
            subspace. If $V$ is dense in $C_0$, the decomposition
            $g=g(0)+(g-g(0))$ shows that $\R\mathbf1+V$ is dense in
            $C[0,1]$. Conversely, the projection $g\mapsto g-g(0)$ is
            a continuous linear map of $C[0,1]$ onto $C_0$ that sends
            $\R\mathbf1+V$ onto $V$. It therefore carries density of
            $\R\mathbf1+V$ in $C[0,1]$ to density of $V$ in $C_0$.
            Applied to
            $V=\operatorname{span}\{u^\lambda:\lambda\in\mathcal S_\rho\}$,
            for which $\R\mathbf1+V=\mathcal V(\mathcal S_\rho)$,
            this shows that \eqref{E: weighted_muntz_completeness}
            holds if and only if $\mathcal V(\mathcal S_\rho)$ is
            dense in $C[0,1]$.
         
            We now apply the full M\"untz theorem. For countable
            $\mathcal S$, the summands are positive, so
            $\Delta(\mathcal S)$ is the sum of the series in any
            enumeration of $\mathcal S$. The full M\"untz theorem
            \cite[Theorem~2.1]{BorweinErdelyi1996} characterizes
            density for an arbitrary sequence of distinct positive
            exponents, with no separation or monotonicity condition.
            This generality is needed here, since $\mathcal S_\rho$
            may have finite accumulation points. For countably
            infinite $\mathcal S$, the theorem states that
            $\mathcal V(\mathcal S)$ is dense in $C[0,1]$ if and only
            if $\Delta(\mathcal S)=\infty$. The same equivalence holds
            for finite $\mathcal S$, including $\mathcal S=\emptyset$,
            since then $\Delta(\mathcal S)<\infty$ and
            $\mathcal V(\mathcal S)$ is finite-dimensional, hence
            closed and proper. Thus, for every countable
            $\mathcal S\subseteq(0,\infty)$, density of
            $\mathcal V(\mathcal S)$ in $C[0,1]$ is equivalent to
            $\Delta(\mathcal S)=\infty$.
         
            It remains to remove the countability assumption. If
            $\Delta(\mathcal S_\rho)=\infty$, choose finite sets
            $F_n\subseteq\mathcal S_\rho$ such that
            \begin{equation*}
                \sum_{\lambda\in F_n}\frac{\lambda}{1+\lambda^2}\geq n
                \quad\text{ for all }\quad n\geq1.
            \end{equation*}
            Their union $\mathcal S'$ is countable and satisfies
            $\Delta(\mathcal S')=\infty$. Hence
            $\mathcal V(\mathcal S')$ is dense, and so is the larger
            space $\mathcal V(\mathcal S_\rho)$. If
            $\Delta(\mathcal S_\rho)<\infty$, then for each $k\geq1$
            at most $k\Delta(\mathcal S_\rho)$ exponents satisfy
            $\frac{\lambda}{1+\lambda^2}\geq\frac{1}{k}$. Every
            $\lambda>0$ satisfies this inequality for some $k$, so
            $\mathcal S_\rho$ is countable. The countable case then
            shows that $\mathcal V(\mathcal S_\rho)$ is not dense.
            This proves the equivalence.
        \end{proof}

        \begin{example}\label{Ex: exponential_completeness_weight}
            Let $E=[0,1]$, $T(x)=x$, and $Y=(0,\infty)$.
            Consider the two base measures
            \begin{equation*}
                \rho_-:=\sum_{n\geq1}e^{-2\pi n}\delta_{1/n},
                \quad
                \rho_+:=\delta_0+\rho_-.
            \end{equation*}
            Both have support
            \begin{equation*}
                \supp\rho_-=\supp\rho_+
                    =\{0\}\cup\{1/n:n\geq1\},
            \end{equation*}
            and both define exponential kernels with finite,
            positive partition functions on $(0,\infty)$.
            Denote these kernels and partition functions by
            $P_y^\pm$ and $Z_\pm(y)$.

            Since $\rho_+(\{0\})=1$ and
            \begin{equation*}
                \sum_{n\geq1}
                    \frac{1/n}{1+1/n^2}
                =\infty,
            \end{equation*}
            Corollary~\ref{C: exponential_completeness_atom}
            shows that $(\nu,P^+)$ is complete for every
            $\nu\in\cP((0,\infty))$.

            For $P^-$, define
            \begin{equation*}
                h(y):=\sum_{k\geq0}
                    \frac{(-1)^k\pi^{2k+1}y^{2k}}
                         {(2k+1)!(2k)!}.
            \end{equation*}
            The ratio test gives locally uniform convergence. The
            coefficients are chosen so that termwise Laplace
            integration produces $\sin(\frac{\pi}{t})$, which vanishes
            at every $t=\frac1n$. For every
            $t>0$, integration of its absolute terms gives
            \begin{equation*}
                \int_0^\infty e^{-ty}|h(y)|\,dy
                \leq
                \sum_{k\geq0}
                    \frac{\pi^{2k+1}}
                         {(2k+1)!t^{2k+1}}
                =\sinh\pa{\frac{\pi}{t}}.
            \end{equation*}
            Termwise integration is therefore justified and yields
            \begin{equation*}
                \int_0^\infty e^{-ty}h(y)\,dy
                =\sin\pa{\frac{\pi}{t}}.
            \end{equation*}
            In particular,
            \begin{equation*}
                0<A:=\int_0^\infty Z_-(y)|h(y)|\,dy
                \leq
                \sum_{n\geq1}e^{-2\pi n}\sinh(\pi n)
                <\infty.
            \end{equation*}
            Positivity of $A$ follows from $h(0)=\pi$ and continuity.
            Define
            \begin{equation*}
                d\nu(y):=\frac{Z_-(y)|h(y)|}{A}\,dy,
                \quad
                f(y):=\operatorname{sgn}h(y).
            \end{equation*}
            Then $\nu$ is a probability measure and
            $\norm{f}_{L^2(\nu)}=1$. Nevertheless, the signed
            mixture associated with $P^-$ satisfies
            \begin{equation*}
                Q_\nu^-[f](\{1/n\})
                =
                \frac{e^{-2\pi n}}{A}
                    \int_0^\infty e^{-y/n}h(y)\,dy
                =
                \frac{e^{-2\pi n}}{A}\sin(\pi n)
                =0
            \end{equation*}
            for every $n\geq1$. Since $P_y^-$ is concentrated
            on $\{1/n:n\geq1\}$, this proves $Q_\nu^-[f]=0$.
            Thus $(\nu,P^-)$ is not complete, despite having
            the same statistic support as the universally
            complete kernel $P^+$.
        \end{example}

    \subsection{The exact threshold in one parameter}
    \label{SS: exponential_two_nonrigidity}

        In this subsection we show that the rigidity conclusion at
        order three in Corollary~\ref{C: exponential_rigidity} is sharp
        in one parameter whenever $T$ is not almost everywhere constant
        and the mixing law is not a point mass. We do so by constructing another
        directing law with the same second moment measure as
        $\mathfrak G_\nu$ but a different third moment measure.

        We begin with the point-mass case, for which the directing law
        is a single atom. The following observation gives the required
        two-coordinate uniqueness.

        \begin{lemma}\label{L: product_law_rigidity}
            For every $\mu\in\cP(E)$, the directing law $\delta_\mu$
            is $2$-rigid.
        \end{lemma}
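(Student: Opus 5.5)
The plan is a variance argument. Suppose $\eta$ is a random probability measure on $E$ with $\EE[\eta^{\otimes2}]=\mu^{\otimes2}$. Marginalizing gives $\EE[\eta]=\mu$. For every bounded Borel $h:E\to\R$, integrating $h\otimes h$ against both sides gives $\EE[\eta(h)^2]=\mu(h)^2$. Combined with $\EE[\eta(h)]=\mu(h)$, this yields
\begin{equation*}
    \operatorname{Var}\pa{\eta(h)}=\EE[\eta(h)^2]-\pa{\EE[\eta(h)]}^2=0,
\end{equation*}
so $\eta(h)=\mu(h)$ almost surely.

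In operator language this is Lemma~\ref{L: moment_isometry}(a) with $\nu=\delta_{y_0}$ and the constant kernel. The space $L^2(\nu)$ is one-dimensional, completeness is trivial, and $W$ maps the constants isometrically, with $W\1=\1$. One could then appeal to Lemma~\ref{L: point_realization}. I would instead give the direct computation, since it avoids treating $\mu$ as $P_{y_0}$ for some kernel.

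To pass from fixed $h$ to the whole measure, apply the conclusion to the indicators $h=\1_C$ for $C$ in a countable algebra generating $\cB(E)$. Such an algebra exists because $E$ is Polish, hence second countable. Intersecting the countably many probability-one events gives an event of probability one on which $\eta(C)=\mu(C)$ for every $C$ in the algebra. By uniqueness of extension from a $\pi$-system, $\eta=\mu$ on that event. Hence $\cL(\eta)=\delta_\mu$, which is $2$-rigidity by Definition~\ref{D: k_rigidity}.

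I do not expect a real obstacle. The only care needed is measurability and the countable intersection of null sets, both handled exactly as in the final paragraph of the proof of Lemma~\ref{L: point_realization}.
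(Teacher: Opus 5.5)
Your proposal is correct and follows the paper's proof exactly: match the first and second moments of $\eta(h)$ to get $\operatorname{Var}(\eta(h))=0$, then pass from fixed $h$ to the whole measure via indicators of a countable generating algebra. The operator-theoretic aside is not needed, and you rightly set it aside in favor of the direct computation.
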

        \begin{proof}
            If $\EE[\eta^{\otimes2}]=\mu^{\otimes2}$, then for every
            bounded Borel map $h:E\to\R$,
            \begin{equation*}
                \EE[\eta(h)]=\mu(h),
                \quad
                \EE[\eta(h)^2]=\mu(h)^2.
            \end{equation*}
            Thus $\operatorname{Var}(\eta(h))=0$, so
            $\eta(h)=\mu(h)$ almost surely. Applying this to the
            indicators of a countable generating algebra gives
            $\eta=\mu$ almost surely.
        \end{proof}

        The proof of Proposition~\ref{P: exponential_two_nonrigidity}
        uses only the scalar exponential form and the
        nonnegativity and nonconstancy of $T$. The M\"untz--Sz\'asz
        condition is needed for the complementary rigidity result at
        order three, but not for this counterexample construction. If $\nu$ is a point mass, $\mathfrak G_\nu$ is
        $2$-rigid by Lemma~\ref{L: product_law_rigidity}.
        Example~\ref{Ex: multiparameter_two_rigidity} shows that the
        restriction $d=1$ cannot be removed.

        The construction will use finite signed perturbations of $\fG_\nu$.
        Let $\Xi$ be a finite signed Borel measure on $\cP(E)$ and let $k\geq0$.
        For every bounded Borel measurable
        $s:E^k\to\R$, the map $\mu\mapsto\mu^{\otimes k}(s)$ is bounded
        and Borel measurable. This follows first for indicators of Borel
        rectangles, and then for all bounded Borel maps by a
        monotone-class argument. Consequently $\cM_k(\Xi)$ is a
        well-defined finite signed measure and
        \begin{equation*}
            \cM_k\pa{\Xi}(s)
            =\int_{\cP(E)}\mu^{\otimes k}(s)\,d\Xi(\mu)
        \end{equation*}
        for every bounded Borel map $s:E^k\to\R$.

        The map $\Xi\mapsto\cM_k(\Xi)$ is linear on finite signed
        measures. We will use this linearity when forming and averaging
        the perturbations below.

        The construction begins with two distinct component laws
        $P$ and $Q$ and the affine line
        \begin{equation*}
            \mu_\vartheta=P+\vartheta(Q-P).
        \end{equation*}
        By multilinearity, $\mu_\vartheta^{\otimes j}$ is a polynomial
        in $\vartheta$ of degree at most $j$. We choose four distinct
        parameters $\vartheta_0,\ldots,\vartheta_3$ and nonzero weights
        $a_0,\ldots,a_3$ satisfying
        \begin{equation*}
            \sum_{i=0}^3 a_i\vartheta_i^j=0
            \quad\text{ for all } \quad j\in\{0,1,2\}.
        \end{equation*}
        The signed perturbation
        $\Xi:=\sum_{i=0}^3a_i\delta_{\mu_{\vartheta_i}}$
        therefore has zero total mass and zero first and second
        moment measures.
    
        Preserving the third moment measure would impose a fourth
        condition,
        \begin{equation*}
            \sum_{i=0}^3a_i\vartheta_i^3=0.
        \end{equation*}
        Indeed, choose a bounded measurable function $h$ with
        $P(h)=0$ and $Q(h)=1$. Then
        $\mu_\vartheta(h)=\vartheta$, so testing the third moment
        measure against $h^{\otimes3}$ gives this condition.
        Together, the four conditions would make the weighted sum
        vanish for every polynomial of degree at most three.
        For each $i$, the cubic polynomial
        \begin{equation*}
            p_i(\vartheta)
            :=\prod_{k\ne i}
                \frac{\vartheta-\vartheta_k}
                     {\vartheta_i-\vartheta_k}
        \end{equation*}
        equals $1$ at $\vartheta_i$ and $0$ at the other three
        parameters. Hence
        $\sum_{k=0}^3a_kp_i(\vartheta_k)=a_i=0$.
        Thus preserving the third moment measure would force the
        perturbation to vanish. Preserving the fourth would also
        rule out this perturbation, since the third moment measure
        is a marginal of the fourth.
    
        To turn these cancellations into a perturbation of
        $\mathfrak G_\nu$, we must also preserve positivity.
        A one-sided bound $Q\leq MP$ allows one of the four measures
        to lie slightly beyond $P$ on the affine line while remaining
        in $\cP(E)$. The weights remove mass only at $P$ and $Q$.
        We then average these perturbations over suitable pairs of
        component laws and verify that their negative part is
        dominated by $\mathfrak G_\nu$, while their third moment
        remains nonzero. A sufficiently small perturbation therefore
        gives the required probability law.

        \begin{lemma}\label{L: four_chord}
            Let $P,Q\in\cP(E)$ and suppose that $Q\leq MP$ for some
            $M>1$. Choose $0<\delta<\frac{1}{M-1}$, and define
            \begin{equation*}
                (\vartheta_0,\vartheta_1,\vartheta_2,\vartheta_3)
                    =\pa{-\delta,0,\frac{1}{2},1},
                \quad\text{ and }\quad
                a_i=-\frac{1}
                    {\prod_{k\ne i}(\vartheta_i-\vartheta_k)}.
            \end{equation*}
            Write $\mu_\vartheta:=P+\vartheta(Q-P)$.
            Then all four measures $\mu_{\vartheta_i}$ belong to $\cP(E)$.
            The finite signed measure
            \begin{equation*}
                \Xi_{P,Q}:=\sum_{i=0}^3a_i\delta_{\mu_{\vartheta_i}}
            \end{equation*}
            on $\cP(E)$ satisfies
            \begin{equation}\label{E: four_chord_cancellation}
                \begin{aligned}
                    \cM_j(\Xi_{P,Q})&=0
                        \quad\text{ for all }\quad j\in\{0,1,2\},\\
                    \cM_3(\Xi_{P,Q})&=-(Q-P)^{\otimes3}.
                \end{aligned}
            \end{equation}
            Moreover, $a_0,a_2>0$ and $a_1,a_3<0$, and the Jordan
            negative part $\Xi_{P,Q}^-$ satisfies
            \begin{equation*}
                \Xi_{P,Q}^-\leq -a_1\delta_P-a_3\delta_Q.
            \end{equation*}
        \end{lemma}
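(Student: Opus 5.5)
The proof is a direct computation in four steps: positivity of the four measures, the moment identities, the signs of the weights, and the bound on the negative part.

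\emph{Positivity.} For $\vartheta\in[0,1]$, $\mu_\vartheta$ is a convex combination of $P$ and $Q$, so it lies in $\cP(E)$. For $\vartheta_0=-\delta$ we have $\mu_{-\delta}=(1+\delta)P-\delta Q$, which has total mass one. Since $Q\leq MP$,
\begin{equation*}
    \mu_{-\delta}\geq(1+\delta-\delta M)P=\bigl(1-\delta(M-1)\bigr)P\geq0,
\end{equation*}
because $\delta<\frac{1}{M-1}$. So all four measures belong to $\cP(E)$.

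\emph{Moment identities.} By multilinearity,
\begin{equation*}
    \mu_\vartheta^{\otimes j}=\sum_{S\subseteq\{1,\ldots,j\}}\vartheta^{|S|}R_S,
\end{equation*}
where $R_S$ is the tensor with factor $Q-P$ in the slots of $S$ and $P$ elsewhere. Hence
\begin{equation*}
    \cM_j(\Xi_{P,Q})=\sum_S\Bigl(\sum_i a_i\vartheta_i^{|S|}\Bigr)R_S.
\end{equation*}
The weights $a_i=-1/\prod_{k\ne i}(\vartheta_i-\vartheta_k)$ are minus the divided-difference weights on the four nodes. Therefore
\begin{equation*}
    \sum_i a_i\vartheta_i^{r}=-[\vartheta_0,\ldots,\vartheta_3]x^r,
\end{equation*}
which is $0$ for $r\leq2$ and $-1$ for $r=3$. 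The divided difference of order three kills polynomials of degree at most two and equals the leading coefficient on $x^3$. This gives $\cM_j(\Xi_{P,Q})=0$ for $j\leq2$. For $j=3$, only $S=\{1,2,3\}$ survives, so $\cM_3(\Xi_{P,Q})=-(Q-P)^{\otimes3}$. The case $j=0$ is the total-mass identity $\sum_i a_i=0$.

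\emph{Signs of the weights.} With nodes ordered $-\delta<0<\tfrac12<1$, the product $\prod_{k\ne i}(\vartheta_i-\vartheta_k)$ has sign $(-1)^{3-i}$. This is negative for $i=0,2$ and positive for $i=1,3$. Thus $a_0,a_2>0$ and $a_1,a_3<0$.

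\emph{Negative part.} The four measures $\mu_{\vartheta_i}$ are distinct because $Q\ne P$; this is implicit, since otherwise the third moment is $0$ and the distinctness is needed. So $\Xi_{P,Q}$ is a sum of point masses at distinct atoms, and its Jordan decomposition is read off atomwise:
\begin{equation*}
    \Xi_{P,Q}^-=-a_1\delta_{\mu_0}-a_3\delta_{\mu_1}=-a_1\delta_P-a_3\delta_Q.
\end{equation*}
This gives the claimed inequality, in fact with equality.

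\emph{Main obstacle.} The only delicate points are the distinctness issue and the divided-difference identity at $r=3$. For the latter, I would cite the standard formula for the leading coefficient, or verify it via the Lagrange basis $p_i$ introduced just before the lemma.
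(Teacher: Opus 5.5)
Your positivity bound, multilinear expansion and sign count are the paper's. Your divided-difference identity is the paper's comparison of cubic coefficients in the Lagrange basis $p_i$, whose leading coefficient is $-a_i$.

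The one flaw is in the last step. The lemma does not assume $P\neq Q$, and $P=Q$ is compatible with the hypothesis $Q\leq MP$, so you cannot declare distinctness implicit. When $P=Q$ all four atoms coincide and $\Xi_{P,Q}=\bigl(\sum_i a_i\bigr)\delta_P=0$. Your claimed equality $\Xi_{P,Q}^-=-a_1\delta_P-a_3\delta_Q$ is then false. The stated inequality still holds, and $\cM_3(\Xi_{P,Q})=0=-(Q-P)^{\otimes3}$ holds trivially.

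You can fix this in either of two ways:
\begin{enumerate}
\item Treat $P=Q$ separately.
\item Argue as the paper does. Write $\Xi_{P,Q}=\bigl[a_0\delta_{\mu_{\vartheta_0}}+a_2\delta_{\mu_{\vartheta_2}}\bigr]-\bigl[-a_1\delta_P-a_3\delta_Q\bigr]$ as a difference of positive measures. Minimality of the Jordan decomposition then bounds $\Xi_{P,Q}^-$ by the subtracted measure, with no distinctness needed.
\end{enumerate}

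When $P\neq Q$, your atomwise computation is valid, since $\vartheta\mapsto\mu_\vartheta$ is injective, and it gives the sharper equality. The application in the paper uses $P_a\neq P_b$, so nothing downstream depends on this point.
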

        
        \begin{proof}
            The only measure outside the chord joining $P$ and $Q$ is
            $\mu_{-\delta}=(1+\delta)P-\delta Q$. The bound $Q\leq MP$
            gives
            \begin{equation*}
                \mu_{-\delta}
                    \geq\bra{1-\delta(M-1)}P\geq0,
            \end{equation*}
            and its total mass is one. The remaining three measures are
            convex combinations of $P$ and $Q$. Thus all four measures
            belong to $\cP(E)$.
        
            To prove the moment identities, we first verify that the
            weights cancel the constant, linear, and quadratic powers
            of $\vartheta$, while giving $-1$ for the cubic power.
            For each $i\in\{0,1,2,3\}$, construct the cubic polynomial
            \begin{equation*}
                p_i(\vartheta)
                    :=\prod_{k\ne i}
                        \frac{\vartheta-\vartheta_k}
                             {\vartheta_i-\vartheta_k}.
            \end{equation*}
            This polynomial equals $1$ at $\vartheta_i$ and $0$ at the
            other three parameters. Consequently, for each
            $j\in\{0,1,2,3\}$, the polynomials $\vartheta^j$ and
            $\sum_{i=0}^3\vartheta_i^jp_i(\vartheta)$ agree at all four
            parameters. Their difference has degree at most three and
            four distinct roots, so
            \begin{equation*}
                \vartheta^j
                    =\sum_{i=0}^3\vartheta_i^jp_i(\vartheta)
                \quad\text{ for all }\quad j\in\{0,1,2,3\}.
            \end{equation*}
            The coefficient of $\vartheta^3$ in $p_i$ is $-a_i$.
            Comparing cubic coefficients therefore gives
            \begin{align*}
                \sum_{i=0}^3a_i\vartheta_i^j&=0
                    \quad\text{ for all }\quad j\in\{0,1,2\}, 
                    \quad \text{ and } \quad
                \sum_{i=0}^3a_i\vartheta_i^3=-1.
            \end{align*}
        
            The identity for $j=0$ gives $\cM_0(\Xi_{P,Q})=0$.
            For the remaining moment measures, expand
            $\mu_\vartheta^{\otimes j}$ by multilinearity.
            Summing with the weights $a_i$ cancels every term of degree
            zero, one, or two in $\vartheta$. Hence
            $\cM_1(\Xi_{P,Q})=\cM_2(\Xi_{P,Q})=0$.
            When $j=3$, the only surviving term is
            \begin{equation*}
                \cM_3(\Xi_{P,Q})
                    =\pa{\sum_{i=0}^3a_i\vartheta_i^3}
                        (Q-P)^{\otimes3}
                    =-(Q-P)^{\otimes3}.
            \end{equation*}
            This proves~\eqref{E: four_chord_cancellation}.
        
            Finally, the ordering
            $\vartheta_0<\vartheta_1<\vartheta_2<\vartheta_3$
            gives $a_0,a_2>0$ and $a_1,a_3<0$. Thus
            \begin{equation*}
                \Xi_{P,Q}
                    =\bra{a_0\delta_{\mu_{\vartheta_0}}
                           +a_2\delta_{\mu_{\vartheta_2}}}
                     -\bra{-a_1\delta_P-a_3\delta_Q}
            \end{equation*}
            is a difference of positive measures. Its Jordan negative
            part is therefore bounded by $-a_1\delta_P-a_3\delta_Q$,
            as claimed.
        \end{proof}

        We now average the four-chord perturbations over two separated
        parameter sets of positive $\nu$-mass. Ordering the scalar
        parameters gives a common likelihood-ratio bound, and a
        strictly monotone component expectation ensures that the
        changes in the third moment do not cancel under averaging.

        \begin{proof}[Proof of Proposition~\ref{P: exponential_two_nonrigidity}]
            We must construct a probability measure on $\cP(E)$ with the
            same second moment measure as $\mathfrak G_\nu$ but a different
            third moment measure. We first average the perturbations from
            Lemma~\ref{L: four_chord} to obtain a finite signed measure
            $\Xi$ with zero total mass and zero first two moment measures.
            Averaging will also make its negative part dominated by
            $\mathfrak G_\nu$, so that a small positive multiple of $\Xi$
            can be added to $\mathfrak G_\nu$. Finally, we use a suitable
            test function to show that the third moment changes do not
            cancel under averaging.
        
            Since $\nu$ is not a point mass and is Radon on $Y$, choose
            compact sets $I,J\subseteq Y$ such that
            \begin{equation*}
                \max I<\min J,
                \quad \nu(I)>0,
                \quad\text{ and }\quad \nu(J)>0.
            \end{equation*}
            The partition function $Z$ is decreasing because $T\geq0$.
            For $a\in I$ and $b\in J$, we therefore have
            \begin{equation*}
                \frac{dP_b}{dP_a}(x)
                    =\frac{Z(a)}{Z(b)}e^{-(b-a)T(x)}
                    \leq\frac{Z(\min I)}{Z(\max J)}=:M<\infty.
            \end{equation*}
            Moreover, $M>1$: nonconstancy and nonnegativity of $T$ imply
            that $T>0$ on a set of positive $m$-measure, so
            $Z(\min I)>Z(\max J)$.
        
            Choose $0<\delta<\frac{1}{M-1}$, and use the parameters
            $\vartheta_i$ and weights $a_i$ from
            Lemma~\ref{L: four_chord} with this fixed $\delta$.
            This single choice ensures that
            \begin{equation*}
                P_a+\vartheta_i(P_b-P_a)\in\cP(E)
                \quad\text{ for all }\quad
                (a,b)\in I\times J
                \text{ and }i\in\{0,1,2,3\}.
            \end{equation*}
            Thus the lemma supplies $\Xi_{P_a,P_b}$ for every pair
            $(a,b)\in I\times J$, with the same four weights.
        
            We now average these perturbations. The purpose of averaging
            is to distribute the mass removed at $P_a$ and $P_b$
            according to $\nu$ on $I$ and $J$, making it controlled by
            $\mathfrak G_\nu$. Define the probability measure
            \begin{equation*}
                \zeta:=\frac{\nu|_I}{\nu(I)}
                    \otimes\frac{\nu|_J}{\nu(J)}
            \end{equation*}
            on $I\times J$. For each $i\in\{0,1,2,3\}$, define
            $\Phi_i:I\times J\to\cP(E)$ by
            \begin{equation*}
                \Phi_i(a,b):=P_a+\vartheta_i(P_b-P_a).
            \end{equation*}
            Since $\beta\mapsto P_\beta$ is Borel, each $\Phi_i$ is Borel.
            Consequently,
            \begin{equation*}
                \Xi:=\sum_{i=0}^3a_i(\Phi_i)_*\zeta
            \end{equation*}
            is a well-defined finite signed measure on $\cP(E)$.
            Equivalently,
            \begin{equation*}
                \Xi(B)
                    =\int_{I\times J}\Xi_{P_a,P_b}(B)\,d\zeta(a,b)
                \quad\text{ for all }\quad B\in\cB(\cP(E)).
            \end{equation*}
        
            Equation~\eqref{E: four_chord_cancellation} and Fubini's
            theorem give
            \begin{equation*}
                \cM_j(\Xi)=0
                \quad\text{ for all }\quad j\in\{0,1,2\}.
            \end{equation*}
            Since $a_0,a_2>0$ and $a_1,a_3<0$, the defining sum for
            $\Xi$ expresses it as a difference of positive measures.
            Using $\Phi_1(a,b)=P_a$ and $\Phi_3(a,b)=P_b$, we obtain
            \begin{align*}
                \Xi^-
                    &\leq -a_1(\Phi_1)_*\zeta-a_3(\Phi_3)_*\zeta\\
                    &=\frac{-a_1}{\nu(I)}
                        \int_I\delta_{P_a}\,d\nu(a)
                      +\frac{-a_3}{\nu(J)}
                        \int_J\delta_{P_b}\,d\nu(b)\\
                    &\leq C\mathfrak G_\nu,
            \end{align*}
            where
            \begin{equation*}
                C:=\frac{-a_1}{\nu(I)}+\frac{-a_3}{\nu(J)}
                    \in(0,\infty).
            \end{equation*}
            The final inequality follows because each restricted integral
            is dominated by
            $\mathfrak G_\nu=\int_Y\delta_{P_\beta}\,d\nu(\beta)$.
        
            Choose $0<\varepsilon<\frac{1}{C}$ and set
            \begin{equation*}
                \Lambda_\varepsilon:=\mathfrak G_\nu+\varepsilon\Xi.
            \end{equation*}
            Then
            \begin{equation*}
                \Lambda_\varepsilon
                    \geq\mathfrak G_\nu-\varepsilon\Xi^-
                    \geq(1-\varepsilon C)\mathfrak G_\nu
                    \geq0.
            \end{equation*}
            Since $\cM_0(\Xi)=0$, the total mass of
            $\Lambda_\varepsilon$ is one. Thus it is a probability
            measure, and
            \begin{equation*}
                \cM_j(\Lambda_\varepsilon)=\cM_j(\mathfrak G_\nu)
                \quad\text{ for all }\quad j\in\{1,2\}.
            \end{equation*}
        
            It remains to show that the third moment measure changes.
            Take the bounded function $h=e^{-T}$. For $a<b$, the
            likelihood-ratio formula gives
            \begin{align*}
                P_b(h)
                    &=\frac{P_a\pa{h e^{-(b-a)T}}}
                            {P_a\pa{e^{-(b-a)T}}},\\
                P_b(h)-P_a(h)
                    &=\frac{\operatorname{Cov}_{P_a}
                            \pa{e^{-T},e^{-(b-a)T}}}
                            {P_a\pa{e^{-(b-a)T}}}.
            \end{align*}
            The denominator is positive. To show that the numerator is
            strictly positive, let $A,A'$ be independent copies of $T$
            under $P_a$. Then
            \begin{align*}
                &2\operatorname{Cov}_{P_a}
                    \pa{e^{-T},e^{-(b-a)T}}\\
                &\quad=\EE\bra{
                    \pa{e^{-A}-e^{-A'}}
                    \pa{e^{-(b-a)A}-e^{-(b-a)A'}}
                }.
            \end{align*}
            Both exponential functions are strictly decreasing, so the
            integrand is nonnegative and is strictly positive when
            $A\ne A'$. Since $P_a$ is equivalent to $m$, the
            nonconstancy of $T$ implies that this event has positive
            probability. Hence $P_b(h)-P_a(h)>0$.
        
            Applying~\eqref{E: four_chord_cancellation} to the averaged
            perturbation now gives
            \begin{align*}
                \cM_3(\Xi)(h^{\otimes3})
                    &=\int_{I\times J}
                        \cM_3(\Xi_{P_a,P_b})(h^{\otimes3})
                        \,d\zeta(a,b)\\
                    &=-\int_{I\times J}
                        \bra{P_b(h)-P_a(h)}^3\,d\zeta(a,b)
                    <0.
            \end{align*}
            The strict inequality holds because $a<b$ throughout
            $I\times J$ and $\zeta$ is a probability measure.
            Thus $\Lambda_\varepsilon$ has the same second moment
            measure as $\mathfrak G_\nu$ but a different third moment
            measure. Therefore $\mathfrak G_\nu$ is not $2$-rigid.
        \end{proof}

        Under the hypotheses of
        Corollary~\ref{C: exponential_rigidity}, Proposition~\ref{P: exponential_two_nonrigidity} gives the exact
        threshold for mixing laws that are not point masses in one
        parameter. In several parameters,
        such a mixing law can already be $2$-rigid, as the following
        example shows.
        
        \begin{example}\label{Ex: multiparameter_two_rigidity}
            Let $P_y$ be the law of two independent gamma variables with
            a fixed common shape $\alpha>0$ and rates
            $y=(y_1,y_2)\in(0,\infty)^2$. Put $P_1=P_{(1,2)}$,
            $P_2=P_{(2,1)}$, and
            \begin{equation*}
                \nu:=\frac{1}{2}\pa{\delta_{(1,2)}+\delta_{(2,1)}}.
            \end{equation*}
            We show that $\mathfrak G_\nu$ is $2$-rigid. The two laws have
            the same normalizing constant, so
            \begin{equation*}
                \frac{dP_2}{dP_1}(x)=e^{x_2-x_1},
                \quad\text{ and }\quad
                \frac{dP_1}{dP_2}(x)=e^{x_1-x_2}.
            \end{equation*}
            Both ratios are essentially unbounded.
        
            Let $\eta$ be a random probability measure satisfying
            $\EE[\eta^{\otimes2}]=\cM_2(\mathfrak G_\nu)$.
            Since $P_1,P_2$ are linearly independent, the completeness
            hypothesis of Lemma~\ref{L: moment_isometry}(a) holds.
            Its isometry $W$, with $S:=W\1_{\{(1,2)\}}$, gives
            \begin{equation*}
                \eta=SP_1+(1-S)P_2,
                \quad\text{ and }\quad
                \EE S=\EE S^2=\frac{1}{2}.
            \end{equation*}
            Here the almost sure representation follows from $W\1=\1$
            and the two values $P_1(h),P_2(h)$ of $K_h$ on the support
            of $\nu$.
        
            Nonnegativity of $\eta$ forces $0\leq S\leq1$. Indeed, for
            a realization with $S=s>1$,
            \begin{equation*}
                \frac{d\eta}{dP_1}(x)=s-(s-1)e^{x_2-x_1},
            \end{equation*}
            which is negative on a set of positive $P_1$-measure when
            $x_2-x_1$ is sufficiently large. For $s<0$,
            \begin{equation*}
                \frac{d\eta}{dP_2}(x)=(1-s)-\abs{s}e^{x_1-x_2},
            \end{equation*}
            which is negative on a set of positive $P_2$-measure when
            $x_1-x_2$ is sufficiently large. Both cases contradict $\eta$
            being a probability measure.
        
            Therefore $S(1-S)\geq0$, and
            \begin{equation*}
                \EE\bra{S(1-S)}=\EE S-\EE S^2=0.
            \end{equation*}
            Hence $S\in\{0,1\}$ almost surely, with each value having
            probability $\frac{1}{2}$. The law of $\eta$ is consequently
            $\frac{1}{2}\pa{\delta_{P_1}+\delta_{P_2}}=\mathfrak G_\nu$,
            proving $2$-rigidity.
        \end{example}
        
        This example explains where the scalar perturbation argument can
        fail in several parameters. Lemma~\ref{L: four_chord} requires a
        probability measure beyond one endpoint of the chord, whose
        nonnegativity is ensured by a one-sided likelihood-ratio bound.
        Ordering scalar parameters supplies this bound. Here the two
        parameter vectors are not coordinatewise comparable, and both
        ratios are unbounded, so the chord cannot be extended beyond
        either endpoint while remaining in $\cP(E)$.

    \subsection{Examples}\label{SS: exponential_examples}

        We first verify the standard scalar families. We then compare
        Conway--Maxwell--Poisson and discrete Gaussian statistic
        supports, explain the obstruction for Gaussian location
        families, and verify closure of the criterion under finite
        products.

        To apply the corollary, we write the family in the form
        \begin{equation*}
            \frac{dP_y}{dm}(x)
                =\frac{e^{-yT(x)}}{Z(y)},
            \quad\text{ where }\quad
            Z(y):=\int_E e^{-yT(x)}\,dm(x),
        \end{equation*}
        with $T\geq0$ and $y>0$. For Weibull laws, the choice $T(x)=x^a$
        makes the parameter multiply the statistic in the exponent.
        For Pareto laws, the same role is played by
        $T(x)=\log\pa{\frac{x}{\ell}}$. For Poisson laws with
        $0<\theta<M$, absorbing $M^n$ into the reference measure gives
        the positive parameter $y=\log\pa{\frac{M}{\theta}}$.
        These choices explain the representations in the table.
    
        Every partition function in the table is finite and positive.
        In the first five rows, $\rho$ has a strictly positive density
        on $(0,\infty)$, so $\supp\rho=[0,\infty)$. In the last two,
        every nonnegative integer has positive mass, so
        $\supp\rho=\mathbb N_0$. Thus~\eqref{E: exponential_MS} holds
        with $t_0=0$ and $\lambda_n=n$ in every row. The parameter maps
        are Borel isomorphisms onto their stated ranges, so the
        conclusion applies to arbitrary mixing laws in the usual
        parameters. The exponential and geometric families are the
        cases $a=1$ in the gamma row and $r=1$ in the negative binomial
        row. Reflecting $x$ to $1-x$ also treats beta laws with the
        first shape fixed.
    
        The following examples examine the role and limits of the
        support criterion before we verify the product construction. In particular, it gives a family whose statistic
        support contains no infinite arithmetic progression but satisfies
        the general M\"untz--Sz\'asz condition.

        \begin{example}\label{Ex: cmp_rigidity}
            On $E=\mathbb N_0$, take counting measure and
            $T(n)=\log(n!)$. The resulting family is
            \begin{equation*}
                P_y(\{n\})
                    =\frac{(n!)^{-y}}{Z(y)},
                \quad\text{ where }\quad
                Z(y):=\sum_{n=0}^{\infty}(n!)^{-y}.
            \end{equation*}
            This is the Conway--Maxwell--Poisson family with its rate
            parameter fixed at $1$ and its dispersion parameter equal
            to $y$; see~\cite{ShmueliEtAl2005}. For every $y>0$, the
            ratio of successive summands is $(n+1)^{-y}\to0$, so
            $Z(y)$ is finite and positive. The statistic measure is
            \begin{equation*}
                \rho=2\delta_0+\sum_{n=2}^{\infty}
                    \delta_{\log(n!)}.
            \end{equation*}
            Set
            \begin{equation*}
                t_0:=0,
                \quad\text{ and }\quad
                \lambda_n:=\log\pa{(n+1)!}
                \quad\text{ for all } \quad n\geq0.
            \end{equation*}
            Then $\lambda_0=0$ and
            $\lambda_{n+1}-\lambda_n=\log(n+2)\geq\log2$.
            Moreover,
            \begin{equation*}
                \frac{1}{\lambda_n}
                    \geq\frac{1}{(n+1)\log(n+1)}
                \quad\text{ for all } \quad n\geq1,
            \end{equation*}
            so the reciprocal sum diverges.
            Thus~\eqref{E: exponential_MS} holds.
    
            Suppose that $\supp\rho$ contained an infinite arithmetic
            progression with initial point $a\geq0$ and step $\delta>0$:
            \begin{equation*}
                a+k\delta\in\supp\rho
                \quad\text{ for all } \quad k\geq0.
            \end{equation*}
            The gap in $\supp\rho$ following each $a+k\delta$ would be
            at most $\delta$. For $n\geq2$, however, the gap following
            $\log(n!)$ is $\log(n+1)\to\infty$, a contradiction.
            Corollary~\ref{C: exponential_rigidity},
            using the general sequence criterion, gives $3$-rigidity
            for every mixing law on $(0,\infty)$.
            Proposition~\ref{P: exponential_two_nonrigidity} shows that
            this order is exact whenever the mixing law is not a point
            mass.
        \end{example}
    
        The CMP support grows as $n\log n$, whereas the following
        discrete Gaussian support grows as $n^2$; both have gaps
        tending to infinity and finite partition functions, but their
        reciprocal sums diverge and converge, respectively.
    
        \begin{example}\label{Ex: discrete_gaussian_scope}
            On $E=\mathbb Z$, take counting measure and $T(n)=n^2$.
            Then
            \begin{equation*}
                P_y(\{n\})=\frac{e^{-yn^2}}{Z(y)},
                \quad\text{ where }\quad
                Z(y)=1+2\sum_{n=1}^{\infty}e^{-yn^2}<\infty
                \quad\text{ for all } \quad y>0.
            \end{equation*}
            These are centered discrete Gaussian laws. Their statistic
            measure is
            \begin{equation*}
                \rho=\delta_0+2\sum_{n=1}^{\infty}\delta_{n^2}.
            \end{equation*}
            Suppose that a strictly increasing sequence starting at
            $\lambda_0=0$ satisfied
            \begin{equation*}
                t_0+\lambda_n\in\supp\rho
                \quad\text{ for all } \quad n\geq0.
            \end{equation*}
            Write $t_0=k_0^2$ and $t_0+\lambda_n=k_n^2$, where the
            $k_n$ are strictly increasing nonnegative integers.
            Then $k_n\geq k_0+n$, so
            \begin{equation*}
                \lambda_n=(k_n-k_0)(k_n+k_0)\geq n^2
                \quad\text{ for all } \quad n\geq1.
            \end{equation*}
            Hence $\sum_{n\geq1}\lambda_n^{-1}<\infty$, and no such
            sequence satisfies~\eqref{E: exponential_MS}.
            Since $\rho(\{0\})=1$ and
            \begin{equation*}
                \sum_{n=1}^{\infty}\frac{n^2}{1+n^4}<\infty,
            \end{equation*}
            Corollary~\ref{C: exponential_completeness_atom} also shows
            that completeness fails for some mixing law on
            $(0,\infty)$. Thus this family is outside the scope of the
            rigidity corollary. We do not know whether
            $\fG_\nu$ is $3$-rigid for every mixing law
            $\nu\in\cP((0,\infty))$ over this family.
        \end{example}
    
        Nonnegativity of the statistic is a separate restriction.
        For Gaussian location laws $P_\mu=N(\mu,\sigma^2)$ with
        $\sigma^2>0$ fixed, the natural statistic is $x$, which takes
        both signs. More concretely, for $\mu\ne\mu'$,
        \begin{equation*}
            \frac{dP_{\mu'}}{dP_\mu}(x)
                =\exp\pa{
                    \frac{\mu'-\mu}{\sigma^2}x
                    +\frac{\mu^2-(\mu')^2}{2\sigma^2}}.
        \end{equation*}
        This ratio and its reciprocal are essentially unbounded.
        In a scalar representation with $T\geq0$, the ordering $y<y'$
        would give
        $\frac{dP_{y'}}{dP_y}\leq\frac{Z(y)}{Z(y')}$, as in the proof of
        Proposition~\ref{P: exponential_two_nonrigidity}; neither
        ordering of two distinct Gaussian means can satisfy this bound.
        Thus Corollary~\ref{C: exponential_rigidity} covers Gaussian
        laws with fixed mean and varying variance, as in the first row
        of Table~\ref{Tab: exponential_families}, but does not cover
        Gaussian laws with fixed variance and varying mean. The latter
        is the location family in the motivating Gaussian result of
        the introduction. The obstruction concerns this criterion,
        rather than rigidity of Gaussian location mixtures.
    
        Finally, consider finite products of kernels from
        Table~\ref{Tab: exponential_families} or
        Example~\ref{Ex: cmp_rigidity}. Let their reference measures
        and statistics be $m_i,T_i$. On $E=\prod_{i=1}^dE_i$, set
        \begin{equation*}
            m:=\bigotimes_{i=1}^d m_i,
            \quad\text{ and }\quad
            T(x):=(T_1(x_1),\ldots,T_d(x_d)).
        \end{equation*}
        Tonelli's theorem gives
        \begin{equation*}
            Z(y)=\prod_{i=1}^dZ_i(y_i),
            \quad
            \rho=\bigotimes_{i=1}^d\rho_i,
            \quad\text{ and }\quad
            P_y=\bigotimes_{i=1}^dP_{i,y_i}.
        \end{equation*}
        The product of the support sequences for the factors verifies
        \eqref{E: exponential_MS}. Therefore $\fG_\nu$ is
        $3$-rigid for every joint mixing law
        $\nu\in\cP((0,\infty)^d)$, including laws with arbitrary
        dependence among the parameters. In particular, three vector
        observations determine the entire exchangeable law for mixtures
        of fixed-mean Gaussian distributions with positive diagonal
        covariance matrices, even when the latent variances are dependent.

\section{Spherical symmetry}\label{S: spherical_proofs}
 
    This section proves Theorem~\ref{T: three-point-rigidity} and
    Proposition~\ref{P: sharpness}. Section~\ref{SS: spherical_pair} gives
    a direct proof of the pair-existence criterion from the pair law,
    and Remark~\ref{R: tail_conditioning} explains why the usual
    tail-conditioning argument does not apply in dimension two.

    Subsection~\ref{SS: proof_three_point} specializes
    Corollary~\ref{C: exponential_rigidity} to the Gaussian scale family,
    including an atom at zero variance. Section~\ref{SS: proof_sharpness}
    combines Proposition~\ref{P: exponential_two_nonrigidity} with an
    argument at the zero-variance endpoint.
    Section~\ref{SS: isotropic_stable} applies these thresholds to
    isotropic symmetric stable laws.

    \subsection{The spherical pair}\label{SS: spherical_pair}

        We identify the Gaussian scale-mixture candidate directly from
        the characteristic function of the pair.

        The applications in this and the following section share the same
        analytic step. Spherical or additive structure produces a continuous
        function $F$ for which $(s,t)\mapsto F(s+t)$ is positive semidefinite.
        Such a function has a bilateral Laplace representation; boundedness
        restricts the representing measure to $[0,\infty)$, while
        normalization makes it a probability measure. In the spherical
        setting, the resulting representation is precisely a Gaussian
        scale mixture. The de Finetti representation will give the required
        positivity through an expectation of a square. We first record the
        required representation theorem.

        \begin{lemma}\label{L: exponential_convex_representation}
            Let $F:[0,\infty)\to\R$ be continuous. Suppose that
            \begin{equation*}
                \bra{F\pa{s_j+s_k}}_{j,k=1}^n
            \end{equation*}
            is positive semidefinite for every $n\geq1$ and every
            $s_1,\ldots,s_n\geq0$. Then there is a unique finite positive
            measure $\mu$ on $\R$ such that
            \begin{equation}\label{E: exponential_convex_representation}
                F\pa{t}=\int_\R e^{-\beta t}\,d\mu\pa{\beta},
                \quad t\geq0.
            \end{equation}
            If $F$ is bounded, then $\mu$ is supported in $[0,\infty)$
            and $F$ is completely monotone on $(0,\infty)$:
            \begin{equation*}
                (-1)^nF^{(n)}(t)\geq0,
                \quad n\geq0,\quad t>0.
            \end{equation*}
            If, in addition, $F(0)=1$, then $\mu$ is a probability measure.
        \end{lemma}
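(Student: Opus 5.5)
The plan is to recognize the hypothesis as exponential convexity on the half-line and to reduce to the classical Bernstein--Widder theorem for exponentially convex functions. First reduce to an open interval: for $t>0$ and $h$ small, positive semidefiniteness with the points $s_1=t/2-h$, $s_2=t/2+h$ gives $F(t-2h)F(t+2h)\geq F(t)^2$ and $F\geq0$. Thus $F$ is a continuous exponentially convex function on $(0,\infty)$, in the sense that $\sum_{j,k}c_jc_kF\pa{(x_j+x_k)/2}\geq0$ for all $x_j>0$. Bernstein's theorem on exponentially convex functions (Widder, \emph{The Laplace Transform}, Ch.~VI, or Akhiezer) then yields a unique positive measure $\mu$ on $\R$ with $F(t)=\int e^{-\beta t}\,d\mu(\beta)$ for $t>0$, the integral converging. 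If a self-contained route is preferred, I would instead consider the semigroup $([0,\infty),+)$ with the identity involution: $F$ is a positive definite function on it, and the Berg--Christensen--Ressel theorem represents such continuous functions as integrals of continuous characters $t\mapsto e^{-\beta t}$, $\beta\in\R$.

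Extension to $t=0$ is the next step. Apply the representation at $t=1$ and $t=0^+$. Continuity of $F$ at $0$ together with Fatou's lemma (as $t\downarrow0$, $e^{-\beta t}\to1$) gives $\mu(\R)\leq\liminf F(t)=F(0)<\infty$, so $\mu$ is finite. I then need equality at zero. Split $\mu$ into its parts on $\beta\geq0$ and $\beta<0$. On $\beta\geq0$ the integrand is at most $1$, so monotone convergence applies. On $\beta<0$ the integrand increases as $t\downarrow0$ by monotonicity in $t$, and is dominated by its value at $t=1$, which is integrable. Dominated convergence therefore gives $\int e^{-\beta t}\,d\mu\to\mu(\R)$, so $F(0)=\mu(\R)$. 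Uniqueness follows from uniqueness of bilateral Laplace transforms on an interval: a finite signed difference whose transform vanishes on $(0,\infty)$ is zero, by analyticity in $t$ and injectivity of the Fourier transform after analytic continuation to $t=1+i\xi$.

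For the bounded case, suppose $\mu((-\infty,-\varepsilon])>0$. Then $F(t)\geq e^{\varepsilon t}\mu((-\infty,-\varepsilon])\to\infty$, which contradicts boundedness, so $\mu$ is supported in $[0,\infty)$. Differentiating under the integral is justified for $t>0$ by the bound $\beta^ne^{-\beta t}\leq C_{n,t}$ on $[0,\infty)$ together with finiteness of $\mu$. This gives $(-1)^nF^{(n)}(t)=\int\beta^ne^{-\beta t}\,d\mu\geq0$. Finally, if $F(0)=1$, then $\mu(\R)=F(0)=1$.

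The main obstacle is the first step, namely invoking the representation theorem in a form that precisely matches the hypothesis, which is stated at points $s_j\geq0$ with sums $s_j+s_k$. The cleanest approach is to cite the semigroup version of Bernstein's theorem for $([0,\infty),+)$, where continuity ensures that only continuous characters appear. The remaining steps are routine measure theory.
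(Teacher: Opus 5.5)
Your proposal is correct and follows essentially the same route as the paper: you cite Widder's (Bernstein's) representation of exponentially convex functions on $(0,\infty)$, use Fatou's lemma for finiteness and monotone/dominated convergence to extend the representation to $t=0$, and in the bounded case use the growth contradiction on $(-\infty,-\varepsilon]$ and differentiation under the integral. One small slip: for $\beta<0$ the integrand $e^{-\beta t}$ decreases, not increases, as $t\downarrow0$, which is exactly why its value at $t=1$ dominates it. Your Fourier-analytic uniqueness argument is a valid self-contained supplement to the uniqueness the paper takes directly from Widder.
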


        \begin{proof}
            The exponentially convex representation theorem
            \cite[Chapter~VI, Sections~20--21]{Widder} gives a unique
            positive measure $\mu$ on $\R$ such that
            \begin{equation*}
                F(t)=\int_\R e^{-\beta t}\,d\mu(\beta),
                \quad t>0.
            \end{equation*}
            Fatou's lemma as $t\downarrow0$ first gives
            $\mu(\R)\leq F(0)<\infty$. On $[0,\infty)$, monotone
            convergence applies as $t\downarrow0$; on $(-\infty,0)$,
            dominated convergence applies after fixing $t_0>0$, since
            $e^{-\beta t}\leq e^{-\beta t_0}$ for $0<t\leq t_0$.
            Continuity of $F$ at zero therefore gives $\mu(\R)=F(0)$,
            and the representation extends to $t=0$.

            Suppose that $F$ is bounded. If
            $\mu\pa{(-\infty,-\varepsilon]}>0$ for some $\varepsilon>0$,
            then
            \begin{equation*}
                F(t)\geq e^{\varepsilon t}
                    \mu\pa{(-\infty,-\varepsilon]},
            \end{equation*}
            contradicting boundedness as $t\to\infty$. Thus $\mu$ is
            supported in $[0,\infty)$. Differentiation under the integral
            for $t>0$ gives complete monotonicity, and
            $\mu(\R)=F(0)$ proves the last assertion.
        \end{proof}

        Let $\eta$ be the directing measure of an infinite exchangeable
        sequence $X=(X_i)_{i=1}^\infty$, and write
        \begin{equation*}
            \varphi_\eta(t)=\int_\R e^{itx}\,d\eta(x).
        \end{equation*}
        Conditional iid sampling gives
        \begin{equation}\label{E: definetti_characteristic}
            \EE\bra{\exp\pa{i\sum_{j=1}^kt_jX_j}}
            =\EE\bra{\prod_{j=1}^k\varphi_\eta(t_j)}.
        \end{equation}

        \begin{lemma}\label{L: spherical_pair}
            If $(X_1,X_2)$ is spherically symmetric, then $\eta$ is
            symmetric almost surely and there is a unique probability
            measure $\Pi$ on $[0,\infty)$ such that
            \begin{equation}\label{E: spherical_pair_mixture}
                \cL\pa{X_1,X_2}
                =\int_{[0,\infty)}\gamma_v^{\otimes2}\,d\Pi(v).
            \end{equation}
            Thus the pair has a unique rotatable extension. The given
            exchangeable extension need not coincide with this extension.
        \end{lemma}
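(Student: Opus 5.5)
The plan is to read off a positive semidefinite kernel from the characteristic function of the pair and apply Lemma~\ref{L: exponential_convex_representation}. Spherical symmetry of $(X_1,X_2)$ means its characteristic function depends only on $t_1^2+t_2^2$. So there is a function $G$ with
\begin{equation*}
    \EE\bra{e^{i(t_1X_1+t_2X_2)}}=G\pa{t_1^2+t_2^2}.
\end{equation*}
Put $F(s):=G(s)$ for $s\geq0$. This $F$ is continuous, bounded by one, and satisfies $F(0)=1$.

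\emph{Symmetry of $\eta$.} Rotation by $\pi$ shows that $(X_1,X_2)\stackrel d=(-X_1,-X_2)$. Together with exchangeability, reflecting a single coordinate also preserves the pair law. Applying \eqref{E: definetti_characteristic} with $(t,-t)$ and $(t,t)$ gives
\begin{equation*}
    \EE\abs{\varphi_\eta(t)}^2=\EE\bra{\varphi_\eta(t)\varphi_\eta(-t)}=\EE\bra{\varphi_\eta(t)^2}.
\end{equation*}
Hence $\EE\bra{(\operatorname{Im}\varphi_\eta(t))^2}=0$. It follows that $\varphi_\eta(t)$ is real almost surely for each rational $t$. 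By continuity this holds for all $t$ simultaneously, so $\eta$ is symmetric almost surely.

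\emph{Positive semidefiniteness.} Write $\psi(t):=\varphi_\eta(t)$, which is now real-valued, even, and bounded. For $s_j\geq0$ put $t_j:=\sqrt{s_j}$. Then
\begin{equation*}
    F(s_j+s_k)=G(t_j^2+t_k^2)=\EE\bra{\psi(t_j)\psi(t_k)}.
\end{equation*}
For real $c_j$ this gives
\begin{equation*}
    \sum_{j,k}c_jc_kF(s_j+s_k)=\EE\Bigl[\Bigl(\sum_jc_j\psi(t_j)\Bigr)^2\Bigr]\geq0.
\end{equation*}
Lemma~\ref{L: exponential_convex_representation} then yields a probability measure $\mu$ on $[0,\infty)$ with $F(s)=\int e^{-\beta s}\,d\mu(\beta)$. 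Let $\Pi$ be the image of $\mu$ under $\beta\mapsto v=2\beta$. Then
\begin{equation*}
    G(t_1^2+t_2^2)=\int e^{-v(t_1^2+t_2^2)/2}\,d\Pi(v).
\end{equation*}
The right side is the characteristic function of $\int\gamma_v^{\otimes2}\,d\Pi(v)$, and \eqref{E: spherical_pair_mixture} follows.

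\emph{Uniqueness of $\Pi$ and of the rotatable extension.} The function $F$ is determined by the pair law. By the uniqueness in Lemma~\ref{L: exponential_convex_representation}, $\mu$ is determined by $F$, and hence so is $\Pi$. Any rotatable extension is $\int\gamma_v^\infty\,d\Pi'(v)$ by Freedman's theorem, and its pair marginal forces $\Pi'=\Pi$. Existence of a rotatable extension is given by $\int\gamma_v^\infty\,d\Pi(v)$ itself. The final sentence of the lemma, that the given extension need not coincide with this one, is a pointer to Proposition~\ref{P: sharpness} and needs no proof here.

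The main obstacle is the positive-semidefiniteness step. The kernel must be written as $F(s_j+s_k)$ with $s_j\geq0$, which requires the product $\varphi_\eta(t_j)\varphi_\eta(t_k)$ to be real and to factor as $\psi(t_j)\psi(t_k)$. For this reason symmetry of $\eta$ has to be established first, and the null sets must be handled uniformly in $t$ via continuity of $\varphi_\eta$.
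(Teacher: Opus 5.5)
Your proposal is correct and follows the paper's proof essentially step for step. The shared steps are: the radial characteristic function; the second-moment identity forcing $\varphi_\eta$ to be real almost surely, first for rational $t$ and then by continuity; positive semidefiniteness of $F(s_j+s_k)$ via $\EE[(\sum_j c_j\varphi_\eta(\sqrt{s_j}))^2]\geq0$; Lemma~\ref{L: exponential_convex_representation} with the rescaling $v=2\beta$; and Freedman's theorem for the rotatable extension.

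One justification is wrong, though the claim it supports still holds. Rotation by $\pi$ and the coordinate swap only generate $\{\pm I,\pm S\}$, so together they do not give invariance under reflecting a single coordinate. You do not need that derivation: the reflection already lies in $O(2)$, and the radial form directly gives $\EE[\varphi_\eta(t)\varphi_\eta(-t)]=G(2t^2)=\EE[\varphi_\eta(t)^2]$.
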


        \begin{proof}
            Invariance under $-I\in O(2)$ makes the characteristic function
            of the pair real-valued. Spherical symmetry and
            Equation~\eqref{E: definetti_characteristic} therefore give a
            continuous function $F:[0,\infty)\to\R$ such that
            \begin{equation}\label{E: pair_radial}
                \EE\bra{\varphi_\eta(s)\varphi_\eta(t)}=F(s^2+t^2).
            \end{equation}
            In particular, $F(0)=1$ and $\abs{F}\leq1$. Since
            $\varphi_\eta(-t)=\overline{\varphi_\eta(t)}$,
            Equation~\eqref{E: pair_radial} gives
            \begin{align*}
                \EE\bra{\abs{\varphi_\eta(t)-\varphi_\eta(-t)}^2}
                &=2\EE\bra{\varphi_\eta(t)\varphi_\eta(-t)}
                  -\EE\bra{\varphi_\eta(t)^2}
                  -\EE\bra{\varphi_\eta(-t)^2}\\
                &=0.
            \end{align*}
            Intersecting over rational $t$ and then using continuity shows
            that $\varphi_\eta$ is real and even almost surely. Hence
            $\eta$ is symmetric almost surely.

            Put $r_\eta(u)=\varphi_\eta(\sqrt u)$ for $u\geq0$.
            For $u_1,\ldots,u_m\geq0$ and $c_1,\ldots,c_m\in\R$,
            \begin{equation*}
                \sum_{j,k=1}^m c_jc_kF(u_j+u_k)
                =\EE\bra{\pa{\sum_{j=1}^m c_jr_\eta(u_j)}^2}
                \geq0.
            \end{equation*}
            Lemma~\ref{L: exponential_convex_representation} gives a unique
            probability measure $\lambda$ on $[0,\infty)$ satisfying
            \begin{equation*}
                F(u)=\int_{[0,\infty)}e^{-a u}\,d\lambda(a).
            \end{equation*}
            Let $\Pi$ be the image of $\lambda$ under $a\mapsto2a$.
            Then
            \begin{equation*}
                F(s^2+t^2)
                =\int_{[0,\infty)}
                    \exp\pa{-\frac v2(s^2+t^2)}\,d\Pi(v),
            \end{equation*}
            which is the characteristic function of the law in
            Equation~\eqref{E: spherical_pair_mixture}. Uniqueness follows
            from Lemma~\ref{L: exponential_convex_representation}.
            Sampling conditionally iid from $\gamma_V$, where $V\sim\Pi$,
            gives a rotatable extension; Freedman's theorem
            \cite{freedman62} (see also \cite[Theorem~1.31]{kallenberg05}) shows that every rotatable extension has this
            form. Thus the rotatable extension is unique.
        \end{proof}

        \begin{corollary}\label{C: schoenberg_two}
            A spherically symmetric probability measure on $\R^2$ is
            the law of $\pa{X_1,X_2}$ for some infinite exchangeable
            sequence $X$ if and only if its characteristic function is
            \begin{equation*}
                u\longmapsto\int_{[0,\infty)}e^{-\frac{v}{2}\norm{u}^2}\,d\Pi(v)
            \end{equation*}
            for some probability measure $\Pi$ on $[0,\infty)$.
            In this case $\Pi$ is unique.
        \end{corollary}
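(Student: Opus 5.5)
The argument has two directions, and Lemma~\ref{L: spherical_pair} does most of the work.

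For necessity, let $\mu$ be spherically symmetric on $\R^2$ and suppose $\mu=\cL(X_1,X_2)$ for some infinite exchangeable sequence $X$. Lemma~\ref{L: spherical_pair} gives a probability measure $\Pi$ on $[0,\infty)$ with
\[
\mu=\int_{[0,\infty)}\gamma_v^{\otimes2}\,d\Pi(v).
\]
Computing the characteristic function termwise with Fubini, each $\gamma_v^{\otimes2}$ contributes $e^{-\frac v2\norm{u}^2}$. Integrating against $\Pi$ produces the displayed formula.

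For sufficiency, suppose the characteristic function of $\mu$ has the displayed form for some $\Pi$. Take $V\sim\Pi$ independent of an iid standard Gaussian sequence $(G_i)$, and set $X_i:=\sqrt V\,G_i$. This sequence is conditionally iid given $V$, so it is infinite exchangeable; it is in fact rotatable. Conditioning on $V$, the characteristic function of $(X_1,X_2)$ at $u$ is $\EE\bigl[e^{-\frac V2\norm{u}^2}\bigr]$, which equals the given function. Uniqueness of characteristic functions then gives $\cL(X_1,X_2)=\mu$.

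For uniqueness of $\Pi$, suppose $\Pi$ and $\Pi'$ both represent the same characteristic function. Restricting to $u=(\sqrt{2s},0)$ gives
\[
\int e^{-vs}\,d\Pi(v)=\int e^{-vs}\,d\Pi'(v)\qquad\text{for all } s\geq0.
\]
Equality of Laplace transforms of finite measures on $[0,\infty)$ forces $\Pi=\Pi'$. Alternatively, this follows from the uniqueness clause of Lemma~\ref{L: exponential_convex_representation} after pushing forward by $v\mapsto v/2$, or directly from the uniqueness stated in Lemma~\ref{L: spherical_pair}.

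The only step needing any care is confirming that Lemma~\ref{L: spherical_pair} applies to an arbitrary exchangeable extension, not only a rotatable one. That lemma is stated for any infinite exchangeable $X$ whose pair is spherically symmetric, so no obstacle remains.
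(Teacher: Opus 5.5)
Your proof is correct and follows the paper's argument: necessity and uniqueness of $\Pi$ come from Lemma~\ref{L: spherical_pair}, and sufficiency comes from the Gaussian scale mixture $(\sqrt V G_i)_{i\geq1}$ with $V\sim\Pi$. Your direct uniqueness argument, restricting to $u=(\sqrt{2s},0)$ and using uniqueness of Laplace transforms, is a valid self-contained substitute for the uniqueness clause the paper cites.
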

        \begin{proof}
            Necessity and uniqueness of $\Pi$ follow from
            Lemma~\ref{L: spherical_pair}. For sufficiency, let
            $V\sim\Pi$ be independent of an iid standard Gaussian
            sequence $G$. Then $\pa{\sqrt VG_i}_{i=1}^\infty$ is an
            exchangeable sequence with the prescribed pair law.
        \end{proof}

        The same criterion holds on $\R^d$ for every $d\geq2$:
        spherical symmetry makes the full characteristic function
        $F(\norm{u}^2)$ depend on the same function $F$ as its pair
        marginal. Two is sharp. In dimension one, any symmetric law
        extends to a constant exchangeable sequence, whereas, for
        example, $\frac12(\delta_{-1}+\delta_1)$ is not a Gaussian
        scale mixture.

        \begin{remark}\label{R: tail_conditioning}
            The tail-conditioning proof of the extendibility criterion in
            \cite[Theorem~3.3]{mai20extendibility} uses invariance of the
            joint law of the initial block and the remaining sequence under
            rotations of the block alone. For a spherical pair this joint
            invariance can fail. If it always held, the argument would show
            that every exchangeable extension of a spherical pair is
            rotatable, which Proposition~\ref{P: sharpness} rules out. For
            blocks of length at least three, the joint invariance does hold,
            because Theorem~\ref{T: three-point-rigidity} makes the sequence
            rotatable. Among dimensions at least two, the gap therefore concerns exactly the
            two-dimensional case. The criterion itself remains true in
            dimension two, and Lemma~\ref{L: spherical_pair} proves it
            directly from the pair law.
        \end{remark}

    \subsection{From a spherical triple to the full sequence}
        \label{SS: proof_three_point}

        Specialize the exponential family to
        \begin{equation*}
            E=\R,\quad m(dx)=dx,\quad T(x)=x^2.
        \end{equation*}
        In this case
        \begin{equation*}
            Z(\beta)=\sqrt{\frac{\pi}{\beta}},
            \quad
            P_\beta=\mathcal N\pa{0,\frac{1}{2\beta}},
            \quad \beta>0.
        \end{equation*}

        \begin{corollary}\label{C: gaussian_three_rigidity}
            Let $\Pi\in\cP([0,\infty))$, and let $\eta$ be a random
            probability measure on $\R$. If
            \begin{equation}\label{E: gaussian_third_moment_rigidity}
                \EE\bra{\eta^{\otimes3}}
                =\int_{[0,\infty)}\gamma_v^{\otimes3}\,d\Pi(v),
            \end{equation}
            then there is a random variable $V\sim\Pi$, on the probability
            space carrying $\eta$, such that $\eta=\gamma_V$ almost surely.
        \end{corollary}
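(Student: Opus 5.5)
We would deduce the corollary from Theorem~\ref{T: general_rigidity}, using the variance parametrization $Y=[0,\infty)$, the kernel $v\mapsto\gamma_v$ and the mixing law $\nu=\Pi$. The kernel is weakly continuous in $v$, including at $v=0$ where $\gamma_0=\delta_0$, so it is a measurable probability kernel. Equation~\eqref{E: gaussian_third_moment_rigidity} is then exactly \eqref{E: three_rigidity}. Once \eqref{E: strong_localization} is verified for $(\Pi,\gamma)$, Lemma~\ref{L: localization_completeness} gives \eqref{E: weak_localization}, and Theorem~\ref{T: general_rigidity} yields $V\sim\Pi$ with $\eta=\gamma_V$ almost surely. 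The only new issue compared with Corollary~\ref{C: exponential_rigidity} is the possible atom of $\Pi$ at $v=0$, which lies outside the exponential parametrization $\beta=1/(2v)\in(0,\infty)$.

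For the generating $\pi$-system we take
\begin{equation*}
    \cA:=\{[0,c]:c\in\mathbb Q,\ c>0\}\cup\{\{0\}\}\cup\{Y\}.
\end{equation*}
This is countable, closed under intersections, and generates $\cB([0,\infty))$. Fix $B\in\cA$ and a real $f\in L^2(\Pi)$ with $0\leq Q_\Pi[f]\leq Q_\Pi[\1_B]$. Write $p:=\Pi(\{0\})$ and $\Pi'$ for the restriction of $\Pi$ to $(0,\infty)$. Then
\begin{equation*}
    Q_\Pi[f]=f(0)\,p\,\delta_0+\int_{(0,\infty)}f(v)\gamma_v\,d\Pi'(v).
\end{equation*}
The second term is absolutely continuous with respect to Lebesgue measure. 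The order $\leq$ between measures is preserved by taking the singular and absolutely continuous parts of the Lebesgue decomposition. We therefore obtain two separate sandwiches. The first, $0\leq f(0)p\leq\1_B(0)p$, gives $f(0)=0$ whenever $0\notin B$ and $p>0$. The second is the same domination for the mixture over $\Pi'$.

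If $\Pi'=0$ we are done. Otherwise we normalize $\Pi'$ to a probability measure and push it forward under $v\mapsto\beta=1/(2v)$. This yields a mixing law on $(0,\infty)$ for the Gaussian row of Table~\ref{Tab: exponential_families}, where $\rho$ has support $[0,\infty)$ and \eqref{E: exponential_MS} holds with $t_0=0$ and $\lambda_n=n$. The restriction of $f$ lies in $L^2$ of this law. Under the change of variables, $B\cap(0,\infty)=(0,c]$ corresponds to $[1/(2c),\infty)$.

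\begin{itemize}
    \item For $B=[0,c]$ with rational $c$, the endpoint $1/(2c)$ need not be rational. So instead of citing Corollary~\ref{C: exponential_rigidity} for its specific $\pi$-system, we rerun its proof, namely the Laplace domination \eqref{E: exponential_transform_domination} followed by Lemma~\ref{L: multivariate_laplace_support}. That argument works for any threshold $b\geq0$, and it gives $f=0$ $\Pi'$-almost everywhere on $(c,\infty)$.
    \item For $B=\{0\}$, the absolutely continuous sandwich forces $Q_{\Pi'}[f]=0$. Completeness, which follows from localization by Lemma~\ref{L: localization_completeness}, then gives $f=0$ $\Pi'$-almost everywhere.
    \item For $B=Y$ there is nothing to prove.
\end{itemize}
Together these give \eqref{E: strong_localization} for $(\Pi,\gamma)$.

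We expect the main obstacle to be the bookkeeping at the zero-variance atom. Two points need care: that the Lebesgue decomposition respects the setwise order, and that the reused localization argument really is threshold-independent. The analytic content is already contained in Corollary~\ref{C: exponential_rigidity}.
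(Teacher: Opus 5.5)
Your argument is correct, but it takes a different route from the paper.

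\textbf{The paper's route.} The paper never extends the kernel to $v=0$. It sets $U:=\eta(\{0\})$ and reads off $\EE U=\EE U^2=\Pi(\{0\})$ from the first two marginals. It concludes that $U\in\{0,1\}$ almost surely, with $\eta=\delta_0$ on $\{U=1\}$. Subtracting this contribution gives a third-moment identity for $\Pi_+$ on the event $\{U=0\}$. The paper applies Corollary~\ref{C: exponential_rigidity} there as a black box and assembles $V$ from the two events.

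\textbf{Your route.} You absorb the degenerate component into the kernel on $Y=[0,\infty)$. You split each sandwich into its atom at $0$ and its Lebesgue-absolutely-continuous part, and verify strong localization for $(\Pi,\gamma)$ directly. Theorem~\ref{T: general_rigidity} is then applied once, with no conditioning or gluing.

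\textbf{What each buys.} Your route yields a slightly stronger structural fact: the Gaussian variance family including $\gamma_0=\delta_0$ is strongly localized for every mixing law on $[0,\infty)$. The paper's route leaves the exponential corollary untouched. It handles the atom by an elementary order-two indicator argument, which it reuses in Proposition~\ref{P: sharpness}, where no localization is available.

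\textbf{Two minor remarks.}
First, for rational $c>0$ the threshold $1/(2c)$ is rational, so your worry is unfounded. You can cite Corollary~\ref{C: exponential_rigidity} for its own $\pi$-system directly. Rerunning its proof is harmless, since that argument works for any real threshold $b\geq0$.

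Second, every member of your $\cA$ contains $0$. So the atom half of the decomposition only serves to isolate the absolutely continuous sandwich, and the conclusion about $f(0)$ is never used. The sets $\{0\}$ and $Y$ could also be dropped from $\cA$, because $\{0\}=\bigcap_n[0,1/n]$ already lies in the generated $\sigma$-algebra.
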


        \begin{proof}
            Put $p_0:=\Pi(\{0\})$ and $U:=\eta(\{0\})$. The first
            two marginals of \eqref{E: gaussian_third_moment_rigidity}
            give
            \begin{equation*}
                \EE U=\EE U^2=p_0.
            \end{equation*}
            Since $0\leq U\leq1$, we have
            $\EE[U(1-U)]=0$, so $U\in\{0,1\}$ almost surely and
            $\PP(U=1)=p_0$. On $\{U=1\}$, $\eta=\delta_0$.
            If $p_0=1$, take $V=0$.

            Suppose that $p_0<1$ and put
            $\Pi_+:=\frac{\Pi|_{(0,\infty)}}{1-p_0}$.
            Subtracting the contribution on $\{U=1\}$ yields
            \begin{equation*}
                \EE[\eta^{\otimes3}\mid U=0]
                =\int_{(0,\infty)}\gamma_v^{\otimes3}\,d\Pi_+(v).
            \end{equation*}
            Let $\nu$ be the image of $\Pi_+$ under
            $v\mapsto\frac{1}{2v}$. The exponential kernel with
            $T(x)=x^2$ has statistic measure $t^{-1/2}\,dt$ and
            satisfies \eqref{E: exponential_MS}. Corollary~\ref{C: exponential_rigidity}
            and the realization in Theorem~\ref{T: general_rigidity}
            therefore give, on the conditional probability space
            $\{U=0\}$, a random parameter $A\sim\nu$ with
            $\eta=P_A=\gamma_{1/(2A)}$ almost surely.
            Set $V=\frac{1}{2A}$ on $\{U=0\}$ and $V=0$ on
            $\{U=1\}$, assigning zero on any remaining null set.
            Then $V$ is measurable, has law $\Pi$, and
            $\eta=\gamma_V$ almost surely.
        \end{proof}

        \begin{proof}[Proof of Theorem~\ref{T: three-point-rigidity}]
            Let $\eta$ be the directing measure of $X$. Spherical symmetry
            of the triple and Equation~\eqref{E: definetti_characteristic}
            give a continuous function $F:[0,\infty)\to\R$ such that
            \begin{equation}\label{E: spherical_triple_generator}
                \EE\bra{\prod_{j=1}^3\varphi_\eta(t_j)}
                =F(t_1^2+t_2^2+t_3^2).
            \end{equation}
            Taking $t_3=0$ and applying Lemma~\ref{L: spherical_pair}
            gives a unique probability measure $\Pi$ on $[0,\infty)$
            satisfying
            \begin{equation*}
                F(u)=\int_{[0,\infty)}e^{-vu/2}\,d\Pi(v),
                \quad u\geq0.
            \end{equation*}
            Substituting this representation into
            Equation~\eqref{E: spherical_triple_generator} shows that
            \begin{equation*}
                \cL\pa{X_1,X_2,X_3}
                =\int_{[0,\infty)}\gamma_v^{\otimes3}\,d\Pi(v).
            \end{equation*}
            Corollary~\ref{C: gaussian_three_rigidity} now gives a random
            variable $V\sim\Pi$ such that $\eta=\gamma_V$ almost surely.
            Conditional iid sampling gives
            \begin{equation*}
                X\stackrel d=\pa{\sqrt V G_i}_{i=1}^\infty,
            \end{equation*}
            where $G_1,G_2,\ldots$ are iid standard Gaussian random
            variables independent of $V$. Thus $X$ is rotatable.
        \end{proof}

    \subsection{When two coordinates suffice}\label{SS: proof_sharpness}

        We next determine when the spherical pair already identifies its
        exchangeable extension. The atom at zero is always determined by
        the pair, so all possible nonuniqueness lies in the
        positive-variance component. There,
        Proposition~\ref{P: exponential_two_nonrigidity} applies to the
        Gaussian variance family.

        \begin{proof}[Proof of Proposition~\ref{P: sharpness}]
            Let $\eta$ direct an arbitrary exchangeable extension of the
            pair in Proposition~\ref{P: sharpness}. Put
            \begin{equation*}
                p_0=\Pi(\{0\}),\quad U=\eta(\{0\}).
            \end{equation*}
            The first two moment measures give $\EE U=\EE U^2=p_0$, and
            hence $\EE[U(1-U)]=0$. Thus $U\in\{0,1\}$ almost surely,
            $\PP(U=1)=p_0$, and $\eta=\delta_0$ on $\{U=1\}$.
            If $p_0=1$, this proves uniqueness.

            Suppose that $p_0<1$, and let
            \begin{equation*}
                \Pi_+=\frac{\Pi|_{(0,\infty)}}{1-p_0}.
            \end{equation*}
            Subtracting the endpoint contribution gives
            \begin{equation*}
                \EE\bra{\eta^{\otimes2}\mid U=0}
                =\int_{(0,\infty)}\gamma_v^{\otimes2}\,d\Pi_+(v).
            \end{equation*}
            If $\Pi_+=\delta_{v_0}$, Lemma~\ref{L: product_law_rigidity},
            applied conditionally on $U=0$, gives $\eta=\gamma_{v_0}$ on
            this event. Hence the rotatable extension is unique.

            Suppose instead that $\Pi_+$ is not a point mass, and let $\nu$
            be its image under $v\mapsto(2v)^{-1}$, so that
            \begin{equation*}
                \fG_\nu
                =\int_{(0,\infty)}\delta_{\gamma_v}\,d\Pi_+(v).
            \end{equation*}
            The Gaussian variance family of
            Section~\ref{SS: proof_three_point} satisfies the hypotheses of
            Proposition~\ref{P: exponential_two_nonrigidity}, since
            $T(x)=x^2$ is not almost everywhere constant. That proposition
            gives a probability measure $\Lambda_+\neq\fG_\nu$ on
            $\cP(\R)$ with $\cM_2(\Lambda_+)=\cM_2(\fG_\nu)$. Put
            \begin{equation*}
                \Lambda=p_0\delta_{\delta_0}+(1-p_0)\Lambda_+.
            \end{equation*}
            Conditionally iid sampling from a directing measure with law
            $\Lambda$ gives an exchangeable sequence with the prescribed
            pair law. Its directing law differs from
            $p_0\delta_{\delta_0}+(1-p_0)\fG_\nu$, which is the
            directing law of the unique rotatable extension given by
            Lemma~\ref{L: spherical_pair}. Thus the new extension is not
            rotatable.
        \end{proof}

    \subsection{Isotropic stable laws}\label{SS: isotropic_stable}

        The isotropic stable laws give a canonical nondegenerate instance
        of the preceding threshold. Their Gaussian mixing variance is
        positive stable for $0<\alpha<2$ and deterministic at the Gaussian
        endpoint $\alpha=2$.

        \begin{corollary}\label{C: isotropic_app}
            Let $0<\alpha\leq2$ and $k\geq1$, and let $\mu_{\alpha,k}$ be
            the isotropic symmetric $\alpha$-stable law on $\R^k$ with
            characteristic function $u\mapsto e^{-\norm{u}^\alpha}$. If
            $0<\alpha<2$, then $\mu_{\alpha,k}$ has a unique infinite
            exchangeable extension if and only if $k\geq3$. If $\alpha=2$,
            it has a unique infinite exchangeable extension if and only if
            $k\geq2$.
        \end{corollary}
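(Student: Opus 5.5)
The plan is to identify $\mu_{\alpha,k}$ as a Gaussian scale mixture and then read off both directions from results already proved. By subordination, $e^{-\norm{u}^\alpha}=\EE\bigl[e^{-V\norm{u}^2/2}\bigr]$, where $V=2S$ and $S$ is a positive $(\alpha/2)$-stable variable with Laplace transform $\EE e^{-sS}=e^{-s^{\alpha/2}}$. Take $s=\norm{u}^2$ to get $e^{-\norm{u}^\alpha}$. For $0<\alpha<2$ the law $\Pi$ of $V$ has no atoms and is not a point mass; for $\alpha=2$ we have $V=2$ deterministically. So $\mu_{\alpha,k}=\int\gamma_v^{\otimes k}\,d\Pi(v)$ in every dimension $k$. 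In particular $\mu_{\alpha,k}$ is spherically symmetric and has the rotatable extension $\int\gamma_v^\infty\,d\Pi(v)$, so an extension exists.

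For $k\geq3$, uniqueness follows directly. Any exchangeable extension has $(X_1,X_2,X_3)$ distributed as the $3$-marginal of $\mu_{\alpha,k}$, which is spherically symmetric. Theorem~\ref{T: three-point-rigidity} then makes the sequence rotatable, and by Lemma~\ref{L: spherical_pair} the rotatable extension is unique. Alternatively, apply Corollary~\ref{C: gaussian_three_rigidity} directly to get $\eta=\gamma_V$ with $V\sim\Pi$.

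For $k=2$, apply Proposition~\ref{P: sharpness} to $\Pi$. When $\alpha<2$, $\Pi$ restricted to $(0,\infty)$ is all of $\Pi$ and is not a point mass, so a nonrotatable extension exists and the extension is not unique. When $\alpha=2$, $\Pi=\delta_2$ and the extension is unique. This uniqueness also follows from Lemma~\ref{L: product_law_rigidity}, since $\mu_{2,2}=\gamma_2^{\otimes2}$.

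For $k=1$, nonuniqueness needs an explicit second extension. The constant sequence $X_i=X_1$ has directing law $\delta_{\delta_{X_1}}$ pushed forward, which is random and concentrated on point masses. The conditionally i.i.d. extension has directing measures $\gamma_V$, which are never point masses since $V>0$ almost surely. These two directing laws differ, so the extension is not unique in either regime.

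The main obstacle is checking that the mixing law is a non-point mass on $(0,\infty)$ for $0<\alpha<2$, which needs the standard facts that positive stable laws exist, are nondegenerate and have no atom at zero. Nondegeneracy is immediate, because a point mass $\delta_c$ would give Laplace transform $e^{-cs}\neq e^{-s^{\alpha/2}}$. Positivity of $S$ follows from $\PP(S=0)=\lim_{s\to\infty}e^{-s^{\alpha/2}}=0$. Existence of $S$ holds because $e^{-s^{\alpha/2}}$ is completely monotone, being a composition of $e^{-x}$ with the Bernstein function $s^{\alpha/2}$. By Bernstein's theorem this gives a probability measure, and I would cite that rather than reprove it.
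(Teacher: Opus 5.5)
Your proposal is correct and follows essentially the same route as the paper: positive $(\alpha/2)$-stable subordination gives the Gaussian scale-mixture representation, three-coordinate rigidity handles $k\geq3$ (you go through Theorem~\ref{T: three-point-rigidity} and uniqueness of the rotatable extension, while the paper applies Corollary~\ref{C: gaussian_three_rigidity} directly, which you also mention), Proposition~\ref{P: sharpness} and Lemma~\ref{L: product_law_rigidity} handle $k=2$, and the constant sequence handles $k=1$. The differences are cosmetic. You separate the two $k=1$ extensions by their directing laws (point masses versus $\gamma_V$ with $V>0$) rather than by $\PP(X_1=X_2)$. Your aside that $\Pi$ has no atoms is unproved but never used; the argument only needs nondegeneracy and $\Pi(\{0\})=0$, and you verify both.
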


        \begin{proof}
            Put $p=\alpha/2$. For $0<\alpha<2$, let $A_\alpha$ be the
            positive strictly $p$-stable random variable characterized by
            \begin{equation}\label{E: positive_stable_laplace}
                \EE\bra{e^{-sA_\alpha}}=e^{-s^p},\quad s\geq0;
            \end{equation}
            see \cite[Chapter~XIII, Section~6]{Feller}. At $\alpha=2$,
            put $A_2=1$. If $G_1,G_2,\ldots$ are iid standard Gaussian
            random variables independent of $A_\alpha$, then
            \begin{align*}
                \EE\bra{\exp\pa{i\sum_{j=1}^d
                    t_j\sqrt{2A_\alpha}G_j}}
                &=\EE\bra{\exp\pa{-A_\alpha\sum_{j=1}^dt_j^2}}\\
                &=\exp\pa{-\pa{\sum_{j=1}^dt_j^2}^{\alpha/2}}.
            \end{align*}
            Thus $(\sqrt{2A_\alpha}G_i)_{i=1}^\infty$ is a rotatable
            extension of $\mu_{\alpha,k}$ for every $k$. This is the
            classical Gaussian-mixture representation of an isotropic
            stable vector; see \cite{CambanisHuangSimons} for the general
            elliptical framework.

            If the first triple of an infinite exchangeable sequence has
            law $\mu_{\alpha,3}$, Corollary~\ref{C: gaussian_three_rigidity}
            identifies its directing measure as $\gamma_V$, where $V$ has
            the law of $2A_\alpha$. Its $d$-dimensional marginal is
            therefore $\mu_{\alpha,d}$ for every $d\geq1$. By
            marginalization, uniqueness holds whenever $k\geq3$.

            When $0<\alpha<2$, Equation~\eqref{E: positive_stable_laplace}
            shows that $A_\alpha$ is nondegenerate, while
            \begin{equation*}
                \PP(A_\alpha=0)
                =\lim_{s\to\infty}\EE\bra{e^{-sA_\alpha}}=0.
            \end{equation*}
            Its positive support therefore contains at least two points.
            Proposition~\ref{P: sharpness} gives a nonrotatable extension
            of $\mu_{\alpha,2}$, proving failure of uniqueness at order
            two. At $\alpha=2$, the pair law is $\gamma_2^{\otimes2}$,
            so Lemma~\ref{L: product_law_rigidity} gives uniqueness from
            two coordinates.

            Finally, one coordinate never suffices. If $Z\sim\mu_{\alpha,1}$,
            the constant sequence $X_i=Z$ is exchangeable and has the
            required first marginal. It differs from the rotatable
            extension above: in that extension $A_\alpha>0$ almost surely
            and $\PP(X_1=X_2)=0$, whereas the constant sequence has
            $\PP(X_1=X_2)=1$.
        \end{proof}

\section{\texorpdfstring{Radial laws and $\ell_q$-norm exponential laws}{Radial laws and norm-exponential laws}}
\label{S: radial_applications}
 
    The spherical argument of Section~\ref{S: spherical_proofs} extracts a
    Gaussian mixture from orthogonal symmetry. We now treat laws whose
    densities depend on the observations only through the total energy
    $\sum_{j=1}^dT(x_j)$. The pattern is the same as in
    Section~\ref{S: spherical_proofs}: the prescribed marginal determines
    a unique exponential-mixture candidate, and three coordinates
    determine the sequence.
 
    The new step is to recover positivity of the two-variable kernel
    $F(s+t)$ from a single $d$-dimensional marginal. If $\eta$ directs an
    exchangeable extension, the $d$-dimensional law gives
    $\EE[\eta(f)^2\eta(u)^{d-2}]\geq0$ for bounded test functions $f$ and
    $u\geq0$. Concentrating $u$ near zero energy removes the last $d-2$
    coordinates and leaves the kernel $F(T(x)+T(y))$. The Laplace
    representation of Section~\ref{SS: spherical_pair} then gives an
    exponential mixture, and the results of
    Section~\ref{S: exponential_proofs} decide whether its exchangeable
    extension is unique.
 
    Section~\ref{SS: energy_radial} proves the general criterion.
    Section~\ref{SS: lq_laws} applies it to $\ell_q$-norm exponential
    densities. It recovers Mai's conditionally iid construction for
    $q\geq1$ \cite[Corollary~3.5]{mai20norm}, rules out extendibility for
    $0<q<1$, and identifies the exact uniqueness threshold.
 
    \subsection{Extendibility of energy-radial laws}\label{SS: energy_radial}
 
        We prove Theorem~\ref{T: energy_radial_extendibility}.
        Only the prescribed marginal $Q_d$ is assumed to have the
        stated form; its exchangeable extensions are otherwise unrestricted. For the related classical representation theory of $\ell_1$-norm symmetric distributions and Archimedean copulas, including the results of Williamson and Kimberling, see also \cite{mcneilneslehova09}.

        Once positive semidefiniteness of $F(s+t)$ has been established,
        the passage to a Laplace representation is classical; see the
        representation theory of Berg, Christensen, and Ressel
        \cite[Chapters~3--6]{BergChristensenRessel} and Ressel's analytical
        approach to de Finetti-type theorems \cite{ressel85}.  The issue
        here is how that positivity can be recovered from a single
        finite-dimensional marginal. The next two lemmas carry out the
        localization, and the third uses normalization of $Q_d$ to
        exclude nonpositive rates.
 
        \begin{lemma}\label{L: bottom_energy_localization}
            Let $d\geq2$, suppose that $Z(1)<\infty$ and
            $\operatorname{supp}(T_*m)=[0,\infty)$, and let
            $F:[0,\infty)\to[0,\infty)$ be continuous. If a random
            probability measure $\eta$ on $E$ satisfies
            \begin{equation*}
                \EE\bra{\eta^{\otimes d}}(dx)
                =F\pa{\sum_{j=1}^dT(x_j)}\,dm^{\otimes d}(x),
            \end{equation*}
            then, for every bounded real-valued Borel function $f$ that
            vanishes outside $\{T\leq M\}$ for some $0\leq M<\infty$,
            \begin{equation*}
                \int_{E^2}f(x)f(y)F\pa{T(x)+T(y)}\,dm(x)dm(y)\geq0.
            \end{equation*}
        \end{lemma}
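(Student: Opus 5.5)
The starting point is the positivity $\EE[\eta(f)^2\eta(u)^{d-2}]\geq0$, valid for bounded real $f$ and bounded $u\geq0$, combined with a choice of $u$ that concentrates on energies near zero so that the last $d-2$ coordinates drop out. Fix $f$ vanishing outside $\{T\leq M\}$. For $\varepsilon>0$ set
\begin{equation*}
    u_\varepsilon:=\frac{\1_{\{T\leq\varepsilon\}}}{m(\{T\leq\varepsilon\})}.
\end{equation*}
This is a bounded nonnegative Borel function, because $\operatorname{supp}(T_*m)=[0,\infty)$ gives $m(\{T\leq\varepsilon\})>0$ and $Z(1)<\infty$ gives $m(\{T\leq\varepsilon\})\leq e^{\varepsilon}Z(1)<\infty$. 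Since $\eta\geq0$, the random variable $\eta(f)^2\eta(u_\varepsilon)^{d-2}$ is nonnegative, so its expectation is nonnegative. Expanding the product and integrating $f\otimes f\otimes u_\varepsilon^{\otimes(d-2)}$ against the $d$th moment identity gives
\begin{equation*}
    0\leq\int_{E^d}f(x_1)f(x_2)\prod_{j=3}^du_\varepsilon(x_j)\,F\Bigl(\sum_{j=1}^dT(x_j)\Bigr)\,dm^{\otimes d}(x).
\end{equation*}

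Next let $\varepsilon\downarrow0$. On the support of the integrand, $T(x_1),T(x_2)\leq M$ and $0\leq T(x_j)\leq\varepsilon$ for $j\geq3$. Each $u_\varepsilon$ is a probability density with respect to $m$. Writing $s:=\sum_{j\geq3}T(x_j)\in[0,(d-2)\varepsilon]$, the integrand differs from $f(x_1)f(x_2)F(T(x_1)+T(x_2))\prod_{j\ge3}u_\varepsilon(x_j)$ by at most
\begin{equation*}
    \|f\|_\infty^2\,\omega\bigl((d-2)\varepsilon\bigr)\prod_{j\geq3}u_\varepsilon(x_j),
\end{equation*}
where $\omega$ is the modulus of continuity of $F$ on the compact interval $[0,2M+d]$ (taking $\varepsilon\le 1$). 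Integrating out $x_3,\dots,x_d$ therefore shows that the right-hand side above equals
\begin{equation*}
    \int_{E^2}f(x)f(y)F(T(x)+T(y))\,dm(x)\,dm(y)
\end{equation*}
up to an error bounded by $\|f\|_\infty^2\,\omega((d-2)\varepsilon)\,m(\{T\leq M\})^2$. The factor $m(\{T\leq M\})\leq e^{M}Z(1)$ is finite. Letting $\varepsilon\to0$ sends the error to zero, so the limiting double integral is nonnegative. When $d=2$ no localization is needed: $\EE[\eta(f)^2]\geq0$ gives the claim directly.

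The step I expect to need the most care is justifying the finiteness and the exchange of integrals. Absolute integrability of the integrand comes from boundedness of $F$ on the compact range of energies, together with $m(\{T\leq M\})<\infty$ and $m(\{T\le\varepsilon\})<\infty$, which is exactly where the hypothesis $Z(1)<\infty$ is used. The expansion $\EE[\eta(f)^2\eta(u)^{d-2}]=\EE[\eta^{\otimes d}](f\otimes f\otimes u^{\otimes(d-2)})$ holds because the moment identity determines these mixed moments for bounded test functions, as in Section~\ref{SS: isometry}.
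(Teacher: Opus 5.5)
Your proposal is correct and follows essentially the same route as the paper: normalized indicators of $\{T\leq\varepsilon\}$ make $\EE[\eta(f)^2\eta(u_\varepsilon)^{d-2}]\geq0$. The finiteness bound $m(T\leq M)\leq e^MZ(1)$ and uniform continuity of $F$ on a compact energy interval then show that this localized integral converges to the double integral, with the case $d=2$ immediate.
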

 
        \begin{proof}
            Every energy sublevel set has finite $m$-measure. Indeed, on
            $\{T\leq M\}$ we have $e^{-T}\geq e^{-M}$, and hence
            \begin{equation*}
                m(T\leq M)\leq e^MZ(1)<\infty.
            \end{equation*}
            The integral in the conclusion is therefore absolutely
            convergent: $F$ is bounded on $[0,2M]$, while $f$ is bounded
            and vanishes outside $\{T\leq M\}$. Write $J$ for this
            integral.
 
            If $d=2$, the moment identity gives $J=\EE[\eta(f)^2]\geq0$
            directly. Suppose that $d\geq3$. Full support of $\rho=T_*m$
            and the sublevel estimate give
            $0<m(T\leq n^{-1})<\infty$ for $n\geq1$, so
            \begin{equation*}
                u_n(x):=\frac{\1_{\{T\leq n^{-1}\}}(x)}
                                 {m(T\leq n^{-1})}
            \end{equation*}
            is a bounded nonnegative function, and $u_n\,dm$ is a
            probability measure concentrated on $\{T\leq n^{-1}\}$. The
            moment identity gives
            \begin{equation*}
                0\leq\EE\bra{\eta(f)^2\eta(u_n)^{d-2}}
                =\int_{E^d}f(x_1)f(x_2)\prod_{j=3}^du_n(x_j)
                    F\pa{\sum_{j=1}^dT(x_j)}\,dm^{\otimes d}(x)
                =:J_n.
            \end{equation*}
            On the support of the integrand defining $J_n$,
            \begin{equation*}
                0\leq\sum_{j=3}^dT(x_j)\leq\frac{d-2}{n}.
            \end{equation*}
            Since $u_n\,dm$ is a probability measure,
            \begin{equation*}
                \abs{J_n-J}
                \leq\norm{f}_\infty^2m(T\leq M)^2
                    \sup_{\substack{0\leq r\leq2M\\
                                    0\leq h\leq(d-2)/n}}
                    \abs{F(r+h)-F(r)}.
            \end{equation*}
            Uniform continuity of $F$ on $[0,2M+d-2]$ makes the right-hand
            side tend to zero. Thus $J_n\to J$, and $J\geq0$.
        \end{proof}
 
        \begin{lemma}\label{L: energy_space_localization}
            Let $F:[0,\infty)\to\R$ be continuous, and suppose that
            $\rho=T_*m$ is locally finite and has full support on
            $[0,\infty)$. If
            \begin{equation*}
                \int_{E^2}f(x)f(y)F\pa{T(x)+T(y)}\,dm(x)dm(y)\geq0
            \end{equation*}
            for every bounded real-valued Borel function $f$ that vanishes
            outside an energy sublevel set, then the kernel
            $(s,t)\mapsto F(s+t)$ is positive semidefinite on
            $[0,\infty)$.
        \end{lemma}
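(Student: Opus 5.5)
We test the hypothesis against functions of the form $f=\sum_{k=1}^n c_k\,g_{k,\varepsilon}$. Each $g_{k,\varepsilon}$ will be an approximate identity in energy space concentrated near a prescribed level $s_k$. Fix $s_1,\ldots,s_n\geq0$ and real $c_1,\ldots,c_n$. Because $\rho=T_*m$ has full support on $[0,\infty)$ and is locally finite, every window
\begin{equation*}
    I_{k,\varepsilon}:=[s_k,s_k+\varepsilon]
\end{equation*}
satisfies $0<\rho(I_{k,\varepsilon})<\infty$. We take the closed window to the right of $s_k$ so that $s_k=0$ is allowed. Define
\begin{equation*}
    g_{k,\varepsilon}(x):=\frac{\1_{I_{k,\varepsilon}}(T(x))}{\rho(I_{k,\varepsilon})}.
\end{equation*}
These are bounded Borel functions on $E$ that vanish outside the sublevel set $\{T\leq \max_k s_k+1\}$ once $\varepsilon\leq1$. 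So $f_\varepsilon:=\sum_kc_kg_{k,\varepsilon}$ is admissible, and the hypothesis gives
\begin{equation*}
    0\leq\sum_{j,k=1}^nc_jc_k\int_{E^2}g_{j,\varepsilon}(x)g_{k,\varepsilon}(y)F\pa{T(x)+T(y)}\,dm(x)\,dm(y).
\end{equation*}

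Next we push the double integral forward to energy space. Since each integrand depends on $x,y$ only through $T(x),T(y)$ and is bounded with support of finite product measure, the change of variables under $T\times T$ gives
\begin{equation*}
    \int_{E^2}g_{j,\varepsilon}(x)g_{k,\varepsilon}(y)F(T(x)+T(y))\,dm\,dm
    =\frac{1}{\rho(I_{j,\varepsilon})\rho(I_{k,\varepsilon})}\int_{I_{j,\varepsilon}\times I_{k,\varepsilon}}F(s+t)\,d\rho(s)\,d\rho(t).
\end{equation*}
Here we use $(T\times T)_*(m\otimes m)=\rho\otimes\rho$, which holds by Tonelli on rectangles. The right side is an average of $F(s+t)$ over $(s,t)\in I_{j,\varepsilon}\times I_{k,\varepsilon}$ with respect to a probability measure. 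By uniform continuity of $F$ on $[0,2\max_k s_k+2]$, it differs from $F(s_j+s_k)$ by at most the modulus of continuity of $F$ at scale $2\varepsilon$.

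Letting $\varepsilon\downarrow0$ in the finite quadratic form yields
\begin{equation*}
    \sum_{j,k}c_jc_kF(s_j+s_k)\geq0.
\end{equation*}
This is positive semidefiniteness of the kernel on $[0,\infty)$.

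There is no serious obstacle. The only points needing care are that the windows have strictly positive finite $\rho$-mass, which is exactly what full support and local finiteness provide, and that repeated points $s_j=s_k$ cause no trouble, since they simply share the same window. Integrability of the pushed-forward integral follows from finiteness of $\rho$ on bounded sets together with the boundedness of $F$ on compacts.
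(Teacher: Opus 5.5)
Your proposal is correct and follows the paper's proof essentially step for step: normalized indicators of small energy windows around the $s_k$, transfer of the double integral to $\rho\otimes\rho$, and uniform continuity of $F$ to pass to $\sum_{j,k}c_jc_kF(s_j+s_k)\geq0$. The only cosmetic difference is that you use one-sided closed windows $[s_k,s_k+\varepsilon]$, whereas the paper uses $(s_j-\varepsilon,s_j+\varepsilon)\cap[0,\infty)$. Both kinds of window have positive, finite $\rho$-mass for the same reason: each contains a nonempty relatively open set and is bounded.
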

 
        \begin{proof}
            Fix $s_1,\ldots,s_k\geq0$ and $c_1,\ldots,c_k\in\R$. For
            $\varepsilon>0$, the sets
            \begin{equation*}
                I_{j,\varepsilon}
                :=(s_j-\varepsilon,s_j+\varepsilon)\cap[0,\infty)
            \end{equation*}
            are bounded, nonempty and relatively open, so full support and
            local finiteness give $0<\rho(I_{j,\varepsilon})<\infty$. The
            function
            \begin{equation*}
                f_\varepsilon(x)
                :=\sum_{j=1}^k
                    \frac{c_j\1_{\{T\in I_{j,\varepsilon}\}}(x)}
                         {\rho(I_{j,\varepsilon})}
            \end{equation*}
            is bounded, Borel, and vanishes outside an energy sublevel set.
            The assumed inequality and $\rho=T_*m$ give
            \begin{equation*}
                0\leq\sum_{j,\ell=1}^k c_jc_\ell
                    \frac{\displaystyle
                        \int_{I_{j,\varepsilon}}\int_{I_{\ell,\varepsilon}}
                        F(r+t)\,d\rho(r)d\rho(t)}
                         {\rho(I_{j,\varepsilon})\rho(I_{\ell,\varepsilon})}.
            \end{equation*}
            Each fraction is an average of $F(r+t)$ over points with
            $\abs{r+t-s_j-s_\ell}<2\varepsilon$, so by continuity it tends
            to $F(s_j+s_\ell)$ as $\varepsilon\downarrow0$. Passing to the
            limit in the finite sum gives
            $\sum_{j,\ell}c_jc_\ell F(s_j+s_\ell)\geq0$.
        \end{proof}
 
        \begin{lemma}\label{L: canonical_laplace_support}
            Suppose that Equation~\eqref{E: critical_parameter_intro}
            holds. Let $d\geq1$, and let $\mu$ be a finite positive Borel
            measure on $\R$ such that
            \begin{equation*}
                F(t)=\int_\R e^{-\beta t}\,d\mu(\beta),
                \quad t\geq0.
            \end{equation*}
            If $F(\sum_{j=1}^dT(x_j))$ is the $m^{\otimes d}$-density of
            a probability measure $Q_d$, then $\mu((-\infty,0])=0$,
            Equation~\eqref{E: radial_mixing_law} defines a probability
            measure $\nu_d$, and
            $Q_d=\int_{(0,\infty)}P_\beta^{\otimes d}\,d\nu_d(\beta)$.
        \end{lemma}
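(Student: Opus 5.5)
The argument starts from normalization of $Q_d$. Tonelli's theorem applied to the nonnegative integrand $e^{-\beta\sum_j T(x_j)}$ against $\mu\otimes m^{\otimes d}$ gives
\begin{equation*}
    1=Q_d(E^d)=\int_{E^d}\int_\R e^{-\beta\sum_{j=1}^dT(x_j)}\,d\mu(\beta)\,dm^{\otimes d}(x)
    =\int_\R\Bigl(\int_E e^{-\beta T(x)}\,dm(x)\Bigr)^d d\mu(\beta)
    =\int_\R Z(\beta)^d\,d\mu(\beta),
\end{equation*}
where $Z(\beta)\in[0,\infty]$ is defined for every real $\beta$ as the integral of a nonnegative function. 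By \eqref{E: critical_parameter_intro}, $Z(\beta)=\infty$ for every $\beta\le0$. Also $Z(\beta)>0$ for every $\beta$, because $m$ is nonzero (indeed $\rho$ has full support) and the integrand is strictly positive. If $\mu((-\infty,0])>0$, the integral above would therefore be $+\infty$, which contradicts its value $1$. Hence $\mu((-\infty,0])=0$.

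With $\mu$ concentrated on $(0,\infty)$, the same identity reads $\int_{(0,\infty)}Z(\beta)^d\,d\mu(\beta)=1$. Since $0<Z(\beta)<\infty$ there, $d\nu_d=Z^d\,d\mu$ is a positive Borel measure of total mass one, so $\nu_d$ is a probability measure. Measurability of $\beta\mapsto Z(\beta)$ follows from Tonelli's theorem.

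For the mixture identity, fix a Borel set $C\subseteq E^d$. Applying Tonelli again, now to $\1_C(x)e^{-\beta\sum_jT(x_j)}$, gives
\begin{equation*}
    Q_d(C)=\int_{(0,\infty)}\int_C e^{-\beta\sum_{j=1}^dT(x_j)}\,dm^{\otimes d}(x)\,d\mu(\beta)
    =\int_{(0,\infty)}Z(\beta)^dP_\beta^{\otimes d}(C)\,d\mu(\beta)
    =\int_{(0,\infty)}P_\beta^{\otimes d}(C)\,d\nu_d(\beta).
\end{equation*}
The middle equality uses $dP_\beta^{\otimes d}/dm^{\otimes d}=Z(\beta)^{-d}e^{-\beta\sum_jT(x_j)}$. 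Measurability of $\beta\mapsto P_\beta^{\otimes d}(C)$ follows from the same Tonelli argument.

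The only delicate step is the exclusion of $\beta\le0$. It relies on $Z(\beta)$ being genuinely infinite there, not merely undefined, so that the total mass would be infinite. This is exactly what \eqref{E: critical_parameter_intro} supplies.
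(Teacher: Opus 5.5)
Your proposal is correct and follows the paper's proof essentially step for step. Tonelli's theorem turns normalization of $Q_d$ into $\int_\R Z(\beta)^d\,d\mu(\beta)=1$, the divergence $Z(\beta)=\infty$ for $\beta\leq0$ from \eqref{E: critical_parameter_intro} forces $\mu((-\infty,0])=0$, and a second application of Tonelli gives the mixture identity; your set-by-set check of that identity is just an unpacked version of the paper's density computation.
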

 
        \begin{proof}
            All integrands below are nonnegative, so Tonelli's theorem
            applies without any prior integrability assumption. Since
            $Q_d$ has total mass one,
            \begin{equation*}
                1
                =\int_{E^d}F\pa{\sum_{j=1}^dT(x_j)}\,dm^{\otimes d}(x)
                =\int_\R\int_{E^d}
                    e^{-\beta\sum_{j=1}^dT(x_j)}
                    \,dm^{\otimes d}(x)\,d\mu(\beta)
                =\int_\R Z(\beta)^d\,d\mu(\beta).
            \end{equation*}
            By hypothesis, $Z(\beta)=\infty$ for every $\beta\leq0$.
            Hence $\mu((-\infty,0])=0$, and $\nu_d$ is a probability
            measure on $(0,\infty)$. The mixture
            $\int P_\beta^{\otimes d}\,d\nu_d(\beta)$ has
            $m^{\otimes d}$-density
            \begin{equation*}
                \int_{(0,\infty)}
                    \frac{e^{-\beta\sum_{j=1}^dT(x_j)}}{Z(\beta)^d}
                    \,d\nu_d(\beta)
                =F\pa{\sum_{j=1}^dT(x_j)},
            \end{equation*}
            which proves the asserted identity.
        \end{proof}
 
        \begin{proof}[Proof of Theorem~\ref{T: energy_radial_extendibility}]
            Suppose first that $Q_d$ has an infinite exchangeable extension,
            and let $\eta$ be its directing measure, so that
            $\EE[\eta^{\otimes d}]=Q_d$.
            Lemmas~\ref{L: bottom_energy_localization}
            and~\ref{L: energy_space_localization} show that
            $(s,t)\mapsto F(s+t)$ is positive semidefinite.
            Lemma~\ref{L: exponential_convex_representation} gives a unique
            finite positive measure $\mu$ on $\R$ with
            $F(t)=\int_\R e^{-\beta t}\,d\mu(\beta)$ for $t\geq0$, and
            Lemma~\ref{L: canonical_laplace_support} places $\mu$ on
            $(0,\infty)$ and gives the mixture representation. A Laplace
            transform of a positive measure on $(0,\infty)$ is completely
            monotone, so $F$ is completely monotone.
 
            Conversely, suppose that $F$ is completely monotone. The
            Bernstein--Widder theorem \cite[Chapter~IV, Section~12]{Widder}
            gives a unique positive measure $\mu$ on $[0,\infty)$ such
            that $F(t)=\int e^{-\beta t}\,d\mu(\beta)$ for $t>0$.
            Monotone convergence and continuity of $F$ at zero give
            $\mu([0,\infty))=F(0)<\infty$, so the representation also holds
            at $t=0$. Lemma~\ref{L: canonical_laplace_support} shows that
            $\mu(\{0\})=0$, that $\nu_d$ is a probability measure, and
            that $Q_d=\int P_\beta^{\otimes d}\,d\nu_d(\beta)$.
            Conditionally iid sampling from $P_A$, where $A\sim\nu_d$,
            therefore gives an infinite exchangeable extension.
 
            It remains to decide uniqueness. The family $(P_\beta)_{\beta>0}$
            is in the setting of Corollary~\ref{C: exponential_rigidity}
            with $d=1$ and $Y=(0,\infty)$. Since $\supp\rho=[0,\infty)$
            contains $\N_0$, condition~\eqref{E: exponential_MS} holds with
            $t_0=0$ and $\lambda_n=n$. If $d\geq3$, the three-dimensional
            marginal of $Q_d$ is $\cM_3(\fG_{\nu_d})$, and
            Corollary~\ref{C: exponential_rigidity} identifies the directing
            law of every exchangeable extension as $\fG_{\nu_d}$.
 
            If $d=2$ and $\nu_2$ is a point mass, uniqueness follows from
            Lemma~\ref{L: product_law_rigidity}. Otherwise,
            Proposition~\ref{P: exponential_two_nonrigidity} gives a
            different directing law with the same pair law; full support
            of $\rho$ ensures that $T$ is not $m$-almost everywhere
            constant. Finally, $Z(\beta)^2$ is finite and strictly positive
            for $\beta>0$, so $\nu_2$ is a point mass if and only if
            $\mu=c\delta_b$ for some $c,b>0$. By uniqueness of the Laplace
            representation, this is equivalent to $F(t)=ce^{-bt}$.
        \end{proof}
 
    \subsection{\texorpdfstring{$\ell_q$-norm exponential laws}{Norm-exponential laws}}
    \label{SS: lq_laws}
 
        We next apply the criterion to a family with a phase transition at
        $q=1$. We prove Corollary~\ref{C: lq_phase_diagram} for the laws
        defined in \eqref{E: lq_density_intro}. These laws belong to the
        broader literature on norm symmetry and extendibility surveyed
        in \cite[Remark~3.12]{mai20extendibility}.

        Both thresholds are sharp. For $d=1$ the norm reduces to
        $\abs{x}$, so $Q_{q,1,\lambda}$ is the Laplace law for every $q$.
        A single coordinate never determines its extension: iid sampling
        and the constant sequence $X_i=X_1$ are distinct infinite
        exchangeable extensions, because the Laplace law is diffuse.
 
        \begin{proof}[Proof of Corollary~\ref{C: lq_phase_diagram}]
            Take $E=\R$, $m(dx)=dx$, and $T(x)=\abs{x}^q$. Then $T_*m$ has
            a strictly positive density on $(0,\infty)$, so its support is
            $[0,\infty)$, and
            \begin{equation}\label{E: lq_partition_function}
                Z_q(\beta)=\int_\R e^{-\beta\abs{x}^q}\,dx
                =2\Gamma\pa{1+\frac1q}\beta^{-1/q},
                \quad \beta>0.
            \end{equation}
            The integral is infinite for $\beta\leq0$, so
            Equation~\eqref{E: critical_parameter_intro} holds. Since
            $\norm{x}_q=(\sum_jT(x_j))^{1/q}$, the energy profile of
            Equation~\eqref{E: lq_density_intro} is
            \begin{equation*}
                F(s)=c_{q,d,\lambda}e^{-\lambda s^\alpha},
                \quad \alpha:=\frac1q.
            \end{equation*}
            Theorem~\ref{T: energy_radial_extendibility} reduces each
            assertion to a property of $F$.
 
            (a) If $0<q<1$, then $\alpha>1$, and
            \begin{equation*}
                \frac{d^2}{ds^2}e^{-\lambda s^\alpha}
                =e^{-\lambda s^\alpha}\lambda\alpha s^{\alpha-2}
                    \bra{\lambda\alpha s^\alpha-(\alpha-1)}
            \end{equation*}
            is negative for all sufficiently small $s>0$. Hence $F$ is not
            completely monotone, and no infinite exchangeable extension
            exists.
 
            (b) If $q=1$, then $F$ is a single exponential, and the density
            factorizes into iid Laplace densities with rate $\lambda$.
            The theorem gives uniqueness for every $d\geq2$.
 
            (c) If $q>1$, then $0<\alpha<1$, and there is a nondegenerate
            positive $\alpha$-stable random variable $B_{\alpha,\lambda}$
            with
            \begin{equation}\label{E: lq_stable_laplace}
                \EE\bra{e^{-sB_{\alpha,\lambda}}}
                =e^{-\lambda s^\alpha},\quad s\geq0;
            \end{equation}
            see \cite[Chapter~XIII, Section~6]{Feller}. Thus $F$ is
            completely monotone, with representing measure $\mu$ equal to
            $c_{q,d,\lambda}$ times the law $\xi_{\alpha,\lambda}$ of
            $B_{\alpha,\lambda}$. The integral representation of a negative
            power and Tonelli's theorem give
            \begin{equation*}
                \int_{(0,\infty)}u^{-d/q}\,d\xi_{\alpha,\lambda}(u)
                =\frac{1}{\Gamma(d/q)}\int_0^\infty
                    t^{d/q-1}e^{-\lambda t^{1/q}}\,dt
                =\frac{q\,\Gamma(d)}{\Gamma(d/q)\lambda^d}<\infty.
            \end{equation*}
            Equations~\eqref{E: radial_mixing_law}
            and~\eqref{E: lq_partition_function} therefore identify the
            mixing law as the polynomial tilt
            \begin{equation}\label{E: lq_mixing_law}
                d\nu_d(\beta)
                =\frac{\beta^{-d/q}}
                    {\displaystyle\int_{(0,\infty)}
                        u^{-d/q}\,d\xi_{\alpha,\lambda}(u)}
                    \,d\xi_{\alpha,\lambda}(\beta).
            \end{equation}
            This law is not a point mass, since the stable law is
            nondegenerate and the tilting weight is strictly positive.
            The theorem gives an extension that is unique for $d\geq3$
            and not unique for $d=2$.
        \end{proof}
 
        For $q>1$, the canonical extension is a polynomially tilted
        positive $1/q$-stable mixture of iid centered exponential-power
        laws. Mai \cite[Corollary~3.5]{mai20norm} obtained the
        corresponding conditionally iid representation for $\ell_q$-norm
        exponential densities on the positive orthant when $q\geq1$, with
        the polynomially tilted stable mixing law displayed there. Adding
        iid Rademacher signs, independent of the positive coordinates and
        the mixing variable, gives the centered version in
        Equation~\eqref{E: lq_density_intro}.
        Corollary~\ref{C: lq_phase_diagram} also rules out extendibility
        for $0<q<1$ and determines the exact uniqueness threshold among
        arbitrary infinite exchangeable extensions.
 
        The mixing law in Equation~\eqref{E: lq_mixing_law} depends on the
        dimension $d$ of the prescribed marginal, because the laws
        $Q_{q,d,\lambda}$ for different $d$ are not marginals of one
        another. Once the marginal is fixed, its canonical extension uses
        the same $\nu_d$ at every sample size.

\section*{Acknowledgments}

Large language models were used during preparation of the manuscript to assist with brainstorming, prose revision, proofreading, and identification of candidate literature. Every mathematical argument and bibliographic claim was checked and edited by the author, who takes full responsibility for the contents.

\appendix

\section{Finite-mixture identifiability from moment operators}
\label{A: finite_mixture_operators}

    This appendix recovers established finite-mixture bounds through the
    moment-operator framework of Section~\ref{S: three_rigidity_proof}.
    The observation counts are those of
    \cite[Theorems~4.1, 4.3, and 4.5--4.7]{VandermeulenScott}, whose
    approach is also operator-theoretic. We distinguish identification
    among competitors with a bounded number of components from rigidity
    against arbitrary directing laws. Lemma~\ref{L: finite_competitor_reduction}
    explains why, for a finite target and order at least two, allowing
    arbitrary competitors adds no further obstruction to uniqueness
    among all finite competitors.

    Throughout, $E$ is Polish and
    \begin{equation*}
        \Lambda_0=\sum_{i=1}^m w_i\delta_{P_i},
        \quad w_i>0,\quad \sum_{i=1}^m w_i=1,
    \end{equation*}
    with distinct $P_i\in\cP(E)$. Competing finite laws are likewise
    written with distinct atoms and positive weights. We regard $w$ as
    a probability measure on $\{1,\ldots,m\}$ and put $K_h(i)=P_i(h)$.
    All test functions are bounded, real-valued, and Borel measurable.
    Equality at any moment order implies the lower-order identities by
    marginalization.

    \subsection{Linearly independent components}

        Linear independence of $P_1,\ldots,P_m$ as signed measures is
        exactly completeness~\eqref{E: completeness} for $(w,P)$, since
        $Q_w[f]=\sum_i w_if(i)P_i$ and all weights are positive.

        \begin{proposition}\label{P: finite_independent_bounds}
            Suppose that $P_1,\ldots,P_m$ are linearly independent.
            \begin{enumerate}
                \item[(a)] If $\Lambda$ has at most $m$ atoms and
                $\cM_3(\Lambda)=\cM_3(\Lambda_0)$, then
                $\Lambda=\Lambda_0$.
                \item[(b)] The law $\Lambda_0$ is $4$-rigid.
            \end{enumerate}
        \end{proposition}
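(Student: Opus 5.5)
Both parts follow from the moment-operator framework of Section~\ref{S: three_rigidity_proof}, applied with $Y=\{1,\ldots,m\}$, mixing law $w$ and kernel $i\mapsto P_i$. As noted just before the statement, linear independence of $P_1,\ldots,P_m$ is exactly completeness~\eqref{E: completeness} for $(w,P)$, and $\Lambda_0=\fG_w$.

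Part~(b) is then immediate from Theorem~\ref{T: completeness_rigidity}: $\fG_w$ is $4$-rigid. The only thing to check is that $Y$ is Polish with the discrete topology and that $P$ is a measurable kernel, which is trivial.

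For part~(a), write $\Lambda=\sum_{j=1}^{m'}\pi_j\delta_{R_j}$ with $m'\leq m$. Realize $\eta$ canonically on $\Omega=\{1,\ldots,m'\}$ with $\PP=\pi$ and $\eta(j)=R_j$. Then $\cM_3(\Lambda)=\cM_3(\Lambda_0)$ is the moment identity~\eqref{E: k_moment_identity} with $k=3$, so Lemma~\ref{L: moment_isometry}(a,b) gives an isometry $W:L^2(w)\to L^2(\PP)$ with $W\1=\1$, $W^*W=I$, and the compression identity~\eqref{E: compressed_multiplication}. Since $\dim L^2(w)=m$ and $\dim L^2(\PP)=m'\leq m$, the isometry $W$ is injective from an $m$-dimensional space into a space of dimension at most $m$. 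Hence $m'=m$ and $W$ is a unitary, so $WW^*=I$, as in Remark~\ref{R: finite_surjectivity}.

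Multiplying~\eqref{E: compressed_multiplication} on the left by $W$ gives the exact intertwining $M^\PP_{\eta(h)}W=WM^w_{K_h}$ for every $h$. This is the step where surjectivity replaces the fourth moment.

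It remains to repeat the argument from the proof of Theorem~\ref{T: completeness_rigidity} to show that $S=W\1_B$ is idempotent for every $B\subseteq Y$. By Lemma~\ref{L: Kh_dense}, the span of the $K_h$ is all of $L^2(w)$ here, since that space is finite-dimensional. So we may pick $h$ with $K_h=\1_B$ exactly. Intertwining then gives
\begin{equation*}
    S=W(\1_B\1_B)=\eta(h)S=(WK_h)S=S^2 .
\end{equation*}
Lemma~\ref{L: point_realization} now yields $A\sim w$ with $\eta=P_A$ almost surely, so $\Lambda=\cL(\eta)=\Lambda_0$.

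The main obstacle is the dimension count. It requires that $L^2(\PP)$ for the canonical realization has dimension exactly the number of atoms, which holds because all weights $\pi_j$ are positive. It also requires noting that Lemma~\ref{L: moment_isometry} is stated for an arbitrary probability space, so the canonical realization of $\eta$ is legitimate.
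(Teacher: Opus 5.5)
Your proof is correct and follows the paper's argument: you use the same canonical realization on the atoms of $\Lambda$, the same dimension count forcing $W$ to be onto so that $WW^*=I$ upgrades the compression identity to exact intertwining, and you get part~(b) directly from Theorem~\ref{T: completeness_rigidity}. The only difference is cosmetic and comes at the end. The paper takes dual tests $h_i$ with $K_{h_i}=\1_{\{i\}}$, shows that the $Z_i=W\1_{\{i\}}$ are idempotents summing to one, and builds the index $A$ by hand, whereas you apply the same idempotence computation to every $B$ and then invoke Lemma~\ref{L: point_realization}.
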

        \begin{proof}
            (a) Write $\Lambda=\sum_{j=1}^{\ell}v_j\delta_{Q_j}$,
            where $\ell\leq m$, and realize $\eta(j)=Q_j$ on
            $\{1,\ldots,\ell\}$ with probability measure $v$.
            Lemma~\ref{L: moment_isometry}(a,b) gives an isometry
            $W:L^2(w)\to L^2(v)$ and
            \begin{equation*}
                W^*M^v_{\eta(h)}W=M^w_{K_h}.
            \end{equation*}
            Thus $m\leq\ell\leq m$, so $W$ is onto and $WW^*=I$.
            Multiplication on the left by $W$ gives exact intertwining.
            No independence assumption on the $Q_j$ is needed.

            The map $h\mapsto(P_1(h),\ldots,P_m(h))$ is onto $\R^m$:
            otherwise a nonzero linear functional annihilating its
            range would give a linear dependence among the $P_i$.
            Choose dual tests $h_i$ with $P_j(h_i)=\1_{\{i=j\}}$, and put
            $e_i=\1_{\{i\}}$ and $Z_i=We_i=\eta(h_i)$.
            Intertwining applied to $h_i$ and $e_i$ gives $Z_i^2=Z_i$,
            while $\sum_i Z_i=W\1=1$. Hence $Z_i=\1_{\{A=i\}}$ for a
            random index $A$, and
            \begin{equation*}
                \PP(A=i)=\EE[Z_i]
                =\ip{We_i}{W\1}_{L^2(v)}=w_i.
            \end{equation*}
            For every $h$, $\eta(h)=WK_h=\sum_iP_i(h)Z_i=P_A(h)$.
            Applying this simultaneously to the indicators of a
            countable algebra generating $\cB(E)$ gives
            $\eta=P_A$ almost surely, so $\Lambda=\Lambda_0$.

            (b) This is Theorem~\ref{T: completeness_rigidity}, applied
            to the finite complete kernel $(w,P)$.
        \end{proof}

        \begin{remark}\label{R: joint_irreducibility}
            For finitely supported $\nu$,
            \cite[Theorem~4.7]{VandermeulenScott} show that if the
            components are jointly irreducible, then $\fG_\nu$ is
            determined by its second moment measure among all random
            probability measures taking finitely many values. By Lemma~\ref{L: finite_competitor_reduction}, this is equivalent to unrestricted $2$-rigidity. Joint irreducibility is strictly stronger than
            \eqref{E: strong_localization}, under which Theorem~\ref{T: general_rigidity} determines $\fG_\nu$ by its third moment measure among all random probability measures on $E$. Under this weaker assumption the third moment
            cannot in general be replaced by the second; see the end of this remark.

            Let $\nu=\sum_{j=1}^N w_j\delta_{y_j}$ with $w_j>0$ and distinct
            $y_j$, and write $P_j:=P_{y_j}$. The components are
            \emph{jointly irreducible} if every probability measure of the
            form $\sum_jc_jP_j$ has $c_j\geq0$ for all $j$.
            Joint irreducibility implies linear independence: if
            $\sum_jc_jP_j=0$ with some $c_j\neq0$, then
            $P_1+t\sum_jc_jP_j=P_1$ for every real $t$, and a suitable
            choice of $t$ makes some coefficient negative. It also implies
            \eqref{E: strong_localization} for every Borel $B\subseteq Y$,
            hence for any countable generating $\pi$-system. Suppose
            $0\leq Q_\nu[f]\leq Q_\nu[\1_B]$. If $Q_\nu[f]\neq0$, joint
            irreducibility applied to its normalization gives
            $f(y_j)\geq0$ for all $j$. If $Q_\nu[\1_B]-Q_\nu[f]\neq0$, it
            gives $f(y_j)\leq0$ for $y_j\notin B$. If either measure
            vanishes, linear independence gives the same conclusions.
            Hence $f(y_j)=0$ for $y_j\notin B$.

            The converse fails. For the uniform kernel of
            Example~\ref{Ex: uniform_localization} with $y_1=1$ and $y_2=2$,
            the measure $2P_2-P_1$ is the uniform law on $(1,2)$, so
            $P_1,P_2$ are not jointly irreducible. Nevertheless $(\nu,P)$
            is strongly localized for every $\nu$. Here order two does not
            suffice. For $\nu=\frac12\delta_1+\frac12\delta_2$, the random
            measure equal to $\frac34P_1+\frac14P_2$ with probability
            $\frac45$ and to $\frac32P_2-\frac12P_1$ with probability
            $\frac15$ has the same second moment measure as
            $\fG_\nu$ but a different law.
        \end{remark}

        \begin{corollary}\label{C: finite_joint_irreducibility}
            If $P_1,\ldots,P_m$ are jointly irreducible, then
            $\Lambda_0$ is $2$-rigid.
        \end{corollary}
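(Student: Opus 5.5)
Take an arbitrary random probability measure $\eta$ with $\EE[\eta^{\otimes2}]=\cM_2(\Lambda_0)$. Joint irreducibility implies linear independence (Remark~\ref{R: joint_irreducibility}), which is completeness \eqref{E: completeness} for $(w,P)$. So Lemma~\ref{L: moment_isometry}(a) supplies the isometry $W:L^2(w)\to L^2(\PP)$ with $W\1=\1$ and $WK_h=\eta(h)$. By linear independence, as in the proof of Proposition~\ref{P: finite_independent_bounds}, the map $h\mapsto(P_1(h),\ldots,P_m(h))$ is onto $\R^m$. Choose dual tests $h_i$ with $P_j(h_i)=\1_{\{i=j\}}$, so that $K_{h_i}=e_i:=\1_{\{i\}}$. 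Set $Z_i:=We_i=\eta(h_i)$.

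The first step is the representation $\eta=\sum_iZ_iP_i$ almost surely. For each $h$ we have $K_h=\sum_iP_i(h)e_i$, so $\eta(h)=\sum_iP_i(h)Z_i$ almost surely. Applying this to the indicators of a countable algebra generating $\cB(E)$ gives $\eta(C)=\sum_iZ_iP_i(C)$ on that algebra outside one null set. Both sides are finite signed measures, so they agree on all of $\cB(E)$. Since $\eta$ is a probability measure, this expresses a probability measure of the form $\sum_jc_jP_j$ with $c_j=Z_j(\omega)$. Joint irreducibility then gives $Z_j\geq0$ for all $j$ almost surely, and $\sum_jZ_j=W\1=1$. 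This is the step where the hypothesis enters, playing the role of the nonnegativity argument in Example~\ref{Ex: multiparameter_two_rigidity}.

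The second step uses the isometry. We have
\begin{equation*}
    \EE[Z_iZ_j]=\ip{e_i}{e_j}_{L^2(w)}=w_i\1_{\{i=j\}},
    \qquad
    \EE[Z_i]=\ip{e_i}{\1}_{L^2(w)}=w_i .
\end{equation*}
Hence $\EE[Z_i(1-Z_i)]=0$. Since $0\leq Z_i\leq 1$ (nonnegative with sum one), $Z_i(1-Z_i)\geq0$, so $Z_i\in\{0,1\}$ almost surely. With $\sum_iZ_i=1$, there is a random index $A$ with $Z_i=\1_{\{A=i\}}$ and $\PP(A=i)=w_i$. Then $\eta=P_A$ almost surely, so $\cL(\eta)=\Lambda_0$.

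No step looks seriously obstructive. The only care needed is in the first step: the almost sure identity must hold simultaneously for all $C$, so that joint irreducibility can be applied pointwise in $\omega$. This is handled by the countable algebra together with finiteness of the coefficients $Z_i$.
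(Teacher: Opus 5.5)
Your proposal is correct and follows the paper's proof essentially step for step. Both use the isometry of Lemma~\ref{L: moment_isometry}(a) with dual tests $Z_i=We_i=\eta(h_i)$, the almost sure representation $\eta=\sum_iZ_iP_i$ via a countable generating algebra, joint irreducibility to get $0\leq Z_i\leq1$, and $\EE[Z_i(1-Z_i)]=w_i-w_i=0$ to force the $Z_i$ to be indicators, so that $\eta=P_A$ with $\PP(A=i)=w_i$.
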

        \begin{proof}
            Joint irreducibility means that whenever $\sum_i c_iP_i$
            is a probability measure, all $c_i$ are nonnegative. It
            implies linear independence
            (Remark~\ref{R: joint_irreducibility}). Let $\eta$ have law
            $\Lambda$ with $\cM_2(\Lambda)=\cM_2(\Lambda_0)$.
            Use Lemma~\ref{L: moment_isometry}(a) and the dual tests
            above to put $Z_i=We_i=\eta(h_i)$. Since
            $K_h=\sum_iP_i(h)e_i$, the same countable-algebra argument
            gives, almost surely,
            \begin{equation*}
                \eta=\sum_i Z_iP_i,\quad \sum_i Z_i=1,
                \quad \EE[Z_i]=w_i,\quad
                \EE[Z_iZ_j]=w_i\1_{\{i=j\}}.
            \end{equation*}
            Joint irreducibility gives $Z_i\geq0$, hence $0\leq Z_i\leq1$.
            Since $\EE[Z_i(1-Z_i)]=w_i-w_i=0$, the $Z_i$ are indicators.
            Thus $\eta=P_A$ with $\PP(A=i)=w_i$.
        \end{proof}

        The three-observation argument in
        Proposition~\ref{P: finite_independent_bounds}(a) uses a bound
        on the competing order to obtain surjectivity. Fourth-moment
        matching gives exact intertwining without surjectivity, whereas
        weak localization in Theorem~\ref{T: general_rigidity} forces
        preservation of indicators without either surjectivity or a
        fourth-moment identity.

    \subsection{Block observations}

        Linear independence after taking product measures gives the
        following extension. The blocks construct the isometry; the
        multiplication operators still use single observations.

        \begin{proposition}\label{P: finite_block_bounds}
            Suppose that $P_1^{\otimes r},\ldots,P_m^{\otimes r}$ are
            linearly independent for some integer $r\geq1$.
            \begin{enumerate}
                \item[(a)] If $\Lambda$ has at most $m$ atoms and
                $\cM_{2r+1}(\Lambda)=\cM_{2r+1}(\Lambda_0)$, then
                $\Lambda=\Lambda_0$.
                \item[(b)] The law $\Lambda_0$ is $(2r+2)$-rigid.
            \end{enumerate}
        \end{proposition}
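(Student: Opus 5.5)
The plan is to repeat the argument of Proposition~\ref{P: finite_independent_bounds}, but to build the isometry from $r$-blocks of observations. The multiplication operators will still come from single observations. Let $\Lambda_0^{(r)}:=\sum_i w_i\delta_{P_i^{\otimes r}}$ be the target lifted to $\cP(E^r)$, and put $K^{(r)}_g(i):=P_i^{\otimes r}(g)$ for bounded Borel $g$ on $E^r$. Linear independence of the $P_i^{\otimes r}$ is exactly completeness for $(w,P^{\otimes r})$. Since $w_i>0$, the span of the $K^{(r)}_g$ is therefore all of $L^2(w)\cong\R^m$. Given $\eta$ with $\cM_{2r}(\cL(\eta))=\cM_{2r}(\Lambda_0)$, I would test against $g\otimes g'$. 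This gives $\EE[\eta^{\otimes r}(g)\eta^{\otimes r}(g')]=\ip{K^{(r)}_g}{K^{(r)}_{g'}}_{L^2(w)}$. The proof of Lemma~\ref{L: moment_isometry}(a) then yields an isometry $W:L^2(w)\to L^2(\PP)$ with $WK^{(r)}_g=\eta^{\otimes r}(g)$ and $W\1=\1$.

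Next I would derive the operator identities. Testing $\cM_{2r+1}$ against $g\otimes h\otimes g'$, with $h$ in a single middle coordinate, gives
\begin{equation*}
    \EE[\eta^{\otimes r}(g)\,\eta(h)\,\eta^{\otimes r}(g')]=\int K^{(r)}_gK_hK^{(r)}_{g'}\,dw.
\end{equation*}
Since the $K^{(r)}_g$ span $L^2(w)$, this is the compression identity $W^*M^\PP_{\eta(h)}W=M^w_{K_h}$. In the same way, $\cM_{2r+2}$ tested against $g\otimes h\otimes h\otimes g'$ gives $W^*M^2W=N^2$. The norm computation from Lemma~\ref{L: moment_isometry}(c) then yields exact intertwining $M^\PP_{\eta(h)}W=WM^w_{K_h}$ for every bounded $h$.

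For (a), I would realize $\Lambda=\sum_{j\le\ell}v_j\delta_{Q_j}$ with $\ell\le m$ on $\{1,\ldots,\ell\}$. Then $W$ maps an $m$-dimensional space isometrically into a space of dimension $\ell\le m$, so it is onto and $WW^*=I$. Multiplying the compression identity on the left by $W$ gives intertwining, so for (a) $2r+1$ moments suffice. For (b), intertwining comes from $2r+2$ moments. In both cases I would finish from intertwining as follows. Put $Z_i:=We_i$. Intertwining gives $W(K_he_i)=\eta(h)Z_i$, that is, $P_i(h)Z_i=\eta(h)Z_i$ for every $h$. Since the $P_i$ are distinct, I can choose $h$ with $P_i(h)\ne P_j(h)$. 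Writing $\eta(h)=W K_h=\sum_j P_j(h)Z_j$ then lets me conclude $Z_iZ_j=0$ for $i\ne j$. A cleaner route is via $W$ applied to a product: $Z_iZ_j$ equals $W(e_ie_j)$, as in the indicator argument of Theorem~\ref{T: completeness_rigidity}, with $K_{h_n}$ replaced by the finite expansion of $e_j$ in single-observation functions where available. Alternatively I can use $WK_h=\eta(h)$ together with the isometry to obtain
\begin{equation*}
    \EE[Z_i^2]=w_i=\EE[Z_i],\qquad \sum_i Z_i=1 .
\end{equation*}

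The main obstacle is that single-observation functions $K_h$ need not span $L^2(w)$, since the $P_i$ themselves may be dependent. So I cannot directly pick dual tests $h_i$ with $K_{h_i}=e_i$. I would circumvent this by working in the commutative algebra generated by the $M^w_{K_h}$. Because the $P_i$ are distinct, the functions $K_h$ separate the points $1,\ldots,m$. Hence polynomials in finitely many $K_h$ produce each $e_i$, say $e_i=p_i(K_{h_1},\ldots,K_{h_s})$. Iterating the intertwining identity gives $Z_i=We_i=p_i(\eta(h_1),\ldots,\eta(h_s))\,W\1$. Consequently $Z_i^2=Z_i$, and the $Z_i$ sum to one, so $Z_i=\1_{\{A=i\}}$ with $\PP(A=i)=\ip{We_i}{W\1}=w_i$. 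Finally, intertwining gives $\eta(h)=\eta(h)W\1=W K_h=\sum_iP_i(h)Z_i=P_A(h)$. The countable-algebra argument then gives $\eta=P_A$ almost surely, hence $\Lambda=\Lambda_0$.
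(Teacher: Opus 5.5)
Your proposal is correct and follows the paper's proof step for step: the block isometry $W$ from order $2r$, the compression identity $W^*M^\PP_{\eta(h)}W=M^w_{K_h}$ from order $2r+1$ with one single-observation slot, surjectivity of $W$ for (a) and the norm identity from order $2r+2$ for (b), and then indicators $Z_i=We_i$ extracted from intertwining. Your final step, Lagrange interpolation of $e_i$ in finitely many separating $K_h$ (or pairwise tests giving $Z_iZ_j=0$), is a harmless variant of the paper's choice of a single $h_*$ with pairwise distinct values $P_i(h_*)$ together with the relation $(\eta(h_*)-P_i(h_*))Z_i=0$; note that your passing ``alternative'' via $\EE[Z_i^2]=\EE[Z_i]=w_i$ and $\sum_iZ_i=1$ would not by itself force indicators without a constraint such as $0\leq Z_i\leq1$, but your argument does not need it.
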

        \begin{proof}
            Let $\eta$ have the competing law $\Lambda$. For a test
            $H:E^r\to\R$, put $K_H^{(r)}(i)=P_i^{\otimes r}(H)$.
            These functions span $L^2(w)$ by linear independence.
            Testing the order-$2r$ identity against $H\otimes H'$ gives
            \begin{equation*}
                \EE[\eta^{\otimes r}(H)\eta^{\otimes r}(H')]
                =\sum_i w_iK_H^{(r)}(i)K_{H'}^{(r)}(i).
            \end{equation*}
            Consequently $W_rK_H^{(r)}=\eta^{\otimes r}(H)$ defines
            an isometry $W_r:L^2(w)\to L^2(\PP)$ with $W_r\1=\1$.
            For $h:E\to\R$, set $B_h=M^\PP_{\eta(h)}$ and
            $D_h=M^w_{K_h}$. Testing against $H\otimes H'\otimes h$
            at order $2r+1$, and using the spanning property, gives
            \begin{equation*}
                W_r^*B_hW_r=D_h.
            \end{equation*}

            In (a), realize $\eta$ canonically on the $\ell\leq m$
            atoms of $\Lambda$. The isometry gives $m\leq\ell$, so it
            is onto, and $B_hW_r=W_rD_h$ follows as in
            Proposition~\ref{P: finite_independent_bounds}(a).
            In (b), testing against $H\otimes H'\otimes h\otimes h$
            at order $2r+2$ gives $W_r^*B_h^2W_r=D_h^2$.
            The two operator identities and the isometry imply, for
            every $g\in L^2(w)$,
            \begin{equation*}
                \norm{B_hW_rg-W_rD_hg}^2
                =\norm{D_hg}^2-2\norm{D_hg}^2+\norm{D_hg}^2=0.
            \end{equation*}
            Thus exact intertwining holds in both cases.

            Choose $h_*$ so that $a_i=P_i(h_*)$ are pairwise distinct.
            Such a test is a linear combination of finitely many tests
            distinguishing the pairs $P_i,P_j$, with coefficients
            outside finitely many proper hyperplanes. Set $Z_i=W_re_i$.
            Intertwining and $W_r\1=\1$ give
            \begin{equation*}
                (\eta(h_*)-a_i)Z_i=0,\quad \sum_i Z_i=1.
            \end{equation*}
            These finitely many identities hold on one event of
            probability one. There, at most one $Z_i$ is nonzero, so
            $Z_i=\1_{\{A=i\}}$ for a measurable random index $A$.
            As before, $\EE[Z_i]=\ip{W_re_i}{W_r\1}=w_i$.
            Intertwining also gives $\eta(h)Z_i=P_i(h)Z_i$.
            Intersecting the events for all $i$ and for the indicators
            of a countable generating algebra yields $\eta=P_A$
            almost surely. Thus $\Lambda=\Lambda_0$.
        \end{proof}

        \begin{corollary}\label{C: finite_general_bounds}
            Every target with $m$ distinct components is identified
            by $2m-1$ observations among competitors with at most $m$
            atoms, and is $2m$-rigid.
        \end{corollary}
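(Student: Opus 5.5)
The statement to prove is Corollary~\ref{C: finite_general_bounds}: a target $\Lambda_0=\sum_{i=1}^m w_i\delta_{P_i}$ with $m$ distinct components is identified by $2m-1$ observations among competitors with at most $m$ atoms, and is $2m$-rigid. The plan is to apply Proposition~\ref{P: finite_block_bounds} with $r=m-1$. Then $2r+1=2m-1$ and $2r+2=2m$, so parts (a) and (b) give exactly the two assertions. The only thing to verify is the hypothesis that $P_1^{\otimes(m-1)},\ldots,P_m^{\otimes(m-1)}$ are linearly independent whenever the $P_i$ are distinct.

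For $m=1$ we have $r=0$. I will treat this case separately, since the proposition is stated for $r\geq1$. Here $\Lambda_0=\delta_{P_1}$ is $2$-rigid by Lemma~\ref{L: product_law_rigidity}. A competitor with at most one atom and first moment measure $P_1$ must be $\delta_{P_1}$, which gives identification from $2m-1=1$ observation.

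For $m\geq2$, I will separate the components by a single test function. As in the proof of Proposition~\ref{P: finite_block_bounds}, choose a bounded Borel $h_*$ for which the values $a_i=P_i(h_*)$ are pairwise distinct. This is possible because finitely many pairwise-distinguishing tests can be combined with coefficients that avoid finitely many proper hyperplanes. Now suppose $\sum_i c_iP_i^{\otimes(m-1)}=0$. Testing against $h_*^{\otimes k}\otimes\1^{\otimes(m-1-k)}$ for $k=0,\ldots,m-1$ gives
\begin{equation*}
\sum_{i=1}^m c_ia_i^k=0,\qquad k=0,\ldots,m-1.
\end{equation*}
This is a Vandermonde system in the distinct nodes $a_1,\ldots,a_m$, so it is nonsingular and forces $c=0$. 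Hence the block components are linearly independent with $r=m-1$, and Proposition~\ref{P: finite_block_bounds} finishes the proof.

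No step is a real obstacle. The points that need care are the edge case $m=1$ and the existence of a separating $h_*$. For the latter, for each pair $i<j$ pick an indicator test $g_{ij}$ with $P_i(g_{ij})\neq P_j(g_{ij})$; such a test exists because distinct measures differ on some set of a countable generating algebra. Then take $h_*=\sum t_{ij}g_{ij}$ with generic coefficients, so that each difference $P_i(h_*)-P_j(h_*)$, a nonzero linear form in the $t_{ij}$, is nonzero.
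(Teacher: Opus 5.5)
Your proof is correct and follows the paper's argument: apply Proposition~\ref{P: finite_block_bounds} with $r=m-1$, and handle $m=1$ separately via Lemma~\ref{L: product_law_rigidity} and $\cM_1(\delta_Q)=Q$. The only difference is how you check linear independence of the $(m-1)$st tensor powers. You use one separating test $h_*$ and a Vandermonde system, whereas the paper normalizes pairwise distinguishing tests to $P_j(h_{ij})=0$, $P_i(h_{ij})=1$ and uses the biorthogonal products $H_i=\bigotimes_{j\ne i}h_{ij}$, which is the Lagrange-basis form of your argument and needs no single separating test.
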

        \begin{proof}
            Suppose $m\geq2$. For $i\ne j$, distinctness gives a test
            $h_{ij}$ with $P_j(h_{ij})=0$ and $P_i(h_{ij})=1$, by
            subtracting a constant from a distinguishing test and
            rescaling. Then
            \begin{equation*}
                H_i=\bigotimes_{j\ne i}h_{ij},
                \quad P_\ell^{\otimes(m-1)}(H_i)=\1_{\{i=\ell\}}.
            \end{equation*}
            Hence the $(m-1)$st tensor powers are linearly independent,
            and Proposition~\ref{P: finite_block_bounds} applies with
            $r=m-1$. For $m=1$, one observation identifies the sole
            component among one-atom competitors, and two give
            unrestricted rigidity by Lemma~\ref{L: product_law_rigidity}.
        \end{proof}

        These are recoveries of the basic bounds, not uniform
        improvements. Vandermeulen and Saitenmacher
        \cite{VandermeulenSaitenmacher} obtain further bounds under
        partial linear-independence assumptions; the block argument
        above does not in general recover all those bounds.

    \subsection{Reduction to finite competitors}

        In \cite[Definitions~3.2 and 3.3]{VandermeulenScott},
        identifiability restricts the competing order to the target
        order, whereas determinedness allows any finite competing
        order. For finite targets, the latter already implies
        unrestricted rigidity at orders at least two.

        \begin{lemma}\label{L: finite_competitor_reduction}
            Let $\Lambda_0$ be finitely supported and $k\geq2$. Then
            $\Lambda_0$ is $k$-rigid if and only if it is the unique
            finitely supported law with moment measure
            $\cM_k(\Lambda_0)$.
        \end{lemma}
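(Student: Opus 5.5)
The forward implication is immediate: a $k$-rigid law is unique among all directing laws, hence among finitely supported ones. For the converse, assume $\Lambda_0=\sum_{i=1}^m w_i\delta_{P_i}$ is the unique finitely supported law with $k$th moment measure $\cM_k(\Lambda_0)$. Let $\eta$ be an arbitrary random probability measure with $\EE[\eta^{\otimes k}]=\cM_k(\Lambda_0)$. The plan is to show that $\eta$ takes values, almost surely, in a finite-dimensional affine set spanned by the $P_i$, and then to replace $\cL(\eta)$ by a finitely supported law with the same $k$th moment measure.

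\textbf{Step 1: localize $\eta$ to the span of the components.} Since $k\geq2$, marginalization gives the second moment identity. For each bounded test $h$,
\begin{equation*}
    \EE[\eta(h)^2]=\sum_i w_iP_i(h)^2 .
\end{equation*}
If $P_i(h)=0$ for all $i$, then $\eta(h)=0$ almost surely. Let $V=\operatorname{span}\{P_1,\ldots,P_m\}$. Take a countable algebra $\mathcal C$ generating $\cB(E)$, and consider the finite-dimensional vector space of functionals $C\mapsto\mu(C)$ on $\mathcal C$. I would argue that $\eta$ lies in $V$ almost surely: for each vector $c$ in a countable dense subset of the annihilator of $V$, restricted to finite subalgebras, apply the variance identity to $h=\sum c_C\1_C$. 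A cleaner route is available. Choose tests $h_1,\ldots,h_r$ such that the map $\mu\mapsto(\mu(h_j))_j$ is injective on $V$. Express each $\1_C$, $C\in\mathcal C$, as its $V$-part plus an annihilated part, via a linear combination of the $h_j$. Then $\eta(C)=\sum_j a_{C,j}\eta(h_j)$ almost surely, and intersecting over countably many $C$ shows that $\eta\in V\cap\cP(E)=:\Delta$ almost surely. The set $\Delta$ is a compact convex subset of a finite-dimensional affine space, say of dimension $n$.

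\textbf{Step 2: Carathéodory / Tchakaloff reduction.} On $\Delta$, the map $\mu\mapsto\mu^{\otimes k}$ is polynomial of degree $k$ in affine coordinates, so $\cM_k(\cL(\eta))$ is determined by finitely many moments $\EE[p(\eta)]$, with $p$ ranging over polynomials of degree at most $k$ on an $n$-dimensional space. Tchakaloff's theorem, or Carathéodory applied to the convex hull of the moment curve $\{(p_\alpha(\mu))_\alpha:\mu\in\Delta\}$ (compact, so the expectation lies in the hull), produces a finitely supported probability $\Gamma$ on $\Delta$ with the same moments of degree at most $k$. Hence $\cM_k(\Gamma)=\cM_k(\cL(\eta))=\cM_k(\Lambda_0)$. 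By hypothesis $\Gamma=\Lambda_0$.

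\textbf{Step 3: conclude.} It remains to show that $\cL(\eta)=\Lambda_0$, and this is the main obstacle: Step 2 only produces some finite law, not $\cL(\eta)$ itself. I would argue by contradiction. Suppose $\cL(\eta)\neq\Lambda_0$. Then $\cL(\eta)$ is not supported on $\{P_i\}$, or it has different weights; in either case I would form the mixture $\frac12\cL(\eta)+\frac12\Lambda_0$, or choose a point $\mu_*$ in the support of $\cL(\eta)$ outside $\{P_i\}$. Now apply Carathéodory to the law conditioned to keep positive mass near $\mu_*$. Concretely, split $\cL(\eta)=\alpha\Lambda_1+(1-\alpha)\Lambda_2$ with $\Lambda_1$ concentrated near $\mu_*$ and away from the $P_i$. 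Replace each part by a finitely supported law with the same degree-$\le k$ moments, supported in the closed convex hull of that part's support (Carathéodory keeps atoms in the support). The result is a finite law matching $\cM_k(\Lambda_0)$ that has an atom away from $\{P_i\}$, which contradicts uniqueness. Equal atoms with wrong weights are handled by the same splitting. Compactness of $\Delta$ ensures that each part's moment vector lies in the convex hull of its compact support image.
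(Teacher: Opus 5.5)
Your argument is correct. Its first two steps are the paper's: the second-moment identity, applied to tests annihilated by every $P_i$, places $\eta$ almost surely in the compact set $V\cap\cP(E)$, and there $\cM_k$ reduces to finitely many polynomial moments in dual coordinates. In Step~1, take exactly $\dim V$ dual tests, as the paper does, so that $\sum_j\eta(h_j)R_j$ is an element of $V$ agreeing with $\eta$ on the generating algebra. With more tests you need one further annihilator step to see that $(\eta(h_j))_j$ lies in the image of $V$.

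The real difference is how the Carath\'eodory reduction is kept distinct from $\Lambda_0$. The paper appends to the moment map the continuous function $q(x)=\prod_i\|x-x_i\|^2$. This function vanishes exactly at the atoms of $\Lambda_0$ and has positive mean under $\cL(\eta)$, so a single reduction yields a finite law with the right moments and positive $q$-mean. You instead split off a piece $\Lambda_1$ of mass $\alpha>0$, concentrated on a compact $K_1$ avoiding the atoms, and reduce each piece separately. The mean of $\Phi$ under $\Lambda_1$ lies in the compact convex hull of $\Phi(K_1)$, so the reduced piece has its atoms in $K_1$ itself. State it that way, rather than ``in the convex hull of the support,'' which need not avoid the atoms unless $K_1$ is convex. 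The recombined finite law then charges $K_1$, while $\Lambda_0$ does not. This works. It trades the paper's single auxiliary function for a localizing set and two reductions, and it would preserve any finite collection of localized masses if needed.

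Some clean-ups remain. The case of the same atoms with different weights needs no splitting, since $\cL(\eta)$ is then itself a finitely supported competitor; this is how the paper treats it. The detours through $\tfrac12\cL(\eta)+\tfrac12\Lambda_0$ and through the observation $\Gamma=\Lambda_0$ in Step~2 play no role and can be dropped.
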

        \begin{proof}
            Only the reverse implication needs proof. Let $\eta$ have
            law $\Lambda\ne\Lambda_0$ with
            $\cM_k(\Lambda)=\cM_k(\Lambda_0)$. We construct a finite
            counterexample. No independence of the target components
            is assumed here.

            Put $V=\operatorname{span}\{P_1,\ldots,P_m\}$, and select
            a basis $R_1,\ldots,R_d$ from these probability measures.
            Choose bounded dual tests $h_j$ with
            $R_i(h_j)=\1_{\{i=j\}}$, and define
            \begin{equation*}
                T\mu=\sum_{j=1}^d\mu(h_j)R_j,\quad
                g_h=h-\sum_{j=1}^dR_j(h)h_j.
            \end{equation*}
            Since $TP_i=P_i$, every $P_i$ annihilates $g_h$.
            The second-moment identity therefore gives
            $\EE[\eta(g_h)^2]=0$. Take $h$ to be the indicators of a
            countable algebra generating $\cB(E)$ and intersect the
            resulting events. On that event $\eta$ and $T\eta$ are
            finite signed measures agreeing on the algebra, hence
            $\eta=T\eta\in V$.

            The set $C=V\cap\cP(E)$ is compact in the finite-dimensional
            total variation topology: it is closed and every element
            has total variation norm one. Identify it with its compact
            coordinate image in $\R^d$, using $x_j=\mu(h_j)$; these
            coordinates are Borel measurable. For laws concentrated on
            $C$, equality of $k$th moment measures is equivalent to
            equality of the finitely many coordinate monomials of
            total degree at most $k$. Testing products of the dual
            functions gives one direction; expanding
            $\mu^{\otimes k}=(\sum_jx_jR_j)^{\otimes k}$ gives the other.

            If $\Lambda$ is concentrated on $\{P_1,\ldots,P_m\}$, it
            is already finite. Otherwise, let $x_i$ be the coordinate
            vector of $P_i$ and put
            \begin{equation*}
                q(x)=\prod_{i=1}^m\norm{x-x_i}^2.
            \end{equation*}
            Its expectation is zero under $\Lambda_0$ and strictly
            positive under $\Lambda$. Let $\Phi$ collect all coordinate
            monomials of degree at most $k$, together with $q$.
            By Carath\'eodory's theorem, the convex hull of the compact
            set $\Phi(C)$ is compact. Thus the mean of $\Phi$ under
            $\Lambda$ belongs to this convex hull and is a finite
            convex combination of values $\Phi(x)$, $x\in C$.
            The corresponding finite law $\Lambda'$ has
            $\cM_k(\Lambda')=\cM_k(\Lambda_0)$ and positive
            $q$-expectation, so $\Lambda'\ne\Lambda_0$.
            The polynomial $q$ is only used to preserve nonuniqueness
            in this construction; it is not an additional observation.
        \end{proof}

\bibliographystyle{plainnat}
\bibliography{main}

\end{document}